\DeclareFontFamily{T1}{wncyr}{}
\DeclareFontShape{T1}{wncyr}{m}{n}{%
  <5><6><7><8><9>gen*wncyr%
  <10><10.95><12><14.4><17.28><20.74><24.88>wncyr10}{}
\theoremstyle{plain}
\newtheorem{thm}{Theorem}[section]
\newtheorem{lem}[thm]{Lemma}
\newtheorem{prop}[thm]{Proposition}
\newtheorem*{claim}{Claim}
\theoremstyle{definition}
\newtheorem{defn}[thm]{Definition}
\newtheorem{exm}[thm]{Example}
\theoremstyle{remark}
\newtheorem{rem}[thm]{Remark}
\newcommand{\on}{\operatorname}
\newcommand{\mc}{\mathcal}
\newcommand{\mf}{\mathfrak}
\newcommand{\id}{\ensuremath{\mathop{\rm id\,}\nolimits}}
\newcommand{\Z}{\mathbb{Z}}
\newcommand{\C}{\mathbb{C}}
\newcommand{\Q}{\mathbb{Q}}
\newcommand{\A}{\mathbb{A}}
\newcommand{\Sn}[1][n]{\mf S_{#1}}
\newcommand{\ve}{\varepsilon}
\newcommand{\om}{\omega}
\newcommand{\ol}{\overline}
\newcommand{\st}{\stackrel}
\newcommand{\mx}{\mbox}
\newcommand{\geqs}{\geqslant}
\newcommand{\leqs}{\leqslant}
\newcommand{\w}{\wedge}
\newcommand{\wh}{\widehat}
\newcommand{\wt}{\widetilde}
\newcommand{\sm}{\setminus}
\newcommand{\sha}{\mathbin{\textup{\usefont{T1}{wncyr}{m}{n}\char120 }}}
\newcommand{\ds}{\displaystyle}
\newcommand{\ra}{\rightarrow}
\newcommand{\lra}{\longrightarrow}
\newcommand{\HH}{\on{H}}
\newcommand{\CH}{\on{CH}}
\newcommand{\p}{\mathbb{P}}
\newcommand{\Sp}{\on{Spec}}
\newcommand{\dme}[1][]{\mc{DM}_{gm}^{eff}}
\newcommand{\dm}[1][]{\mc{DM}_{gm}}
\newcommand{\SmCorr}[1][]{\on{\SmCorr}_{#1}}
\newcommand{\pst}[1]{\p^1 #1 \setminus \{0,1,\infty\} }
\newcommand{\ps}{\pst{}}
\newcommand{\Nc}[1][X]{{\mc N}^{\bullet}_{#1}}
\newcommand{\cN}[1][1]{{\mc N}^{#1}_{X}}
\newcommand{\cNg}[2]{{\mc N}^{#2}_{#1}}
\newcommand{\Nge}[2]{{\mc N}^{eq, \,#2}_{#1}}
\newcommand{\Alt}{{\mc A}lt}
\newcommand{\Lc}{\mc L}
\newcommand{\Lcu}{\Lc^{1}}
\newcommand{\Lcb}{\mc L^{B}}
\newcommand{\Lcub}{\Lc^{1,B}}
\newcommand{\Zc}{\mc Z}
\newcommand{\Ze}[1][\bullet]{\mc Z_{eq}^{#1}}
\newcommand{\ap}{{a'}}
\newcommand{\bp}{{b'}}
\newcommand{\dN}{\partial}
\newcommand{\da}[1][]{\partial_{#1}}
\newcommand{\dc}{d_{cy}}
\newcommand{\Lie}{\on{Lie}}
\newcommand{\Ttr}{\mc T^{tri}}
\newcommand{\Fq}{\mc F_{\Q}^{\bullet}}
\newcommand{\Td}{\mc T^{dec,or}}
\newcommand{\Tdr}{\mc T^{||}}
\newcommand{\T}[1]{T_{#1^*}}
\newlength{\ledge}
\newlength{\sibdis}
\newlength{\ecan}
\newlength{\ecas}
\newlength{\ecae}
\newlength{\ecao}
\newlength{\eca}
\newlength{\lbullet}
\tikzset{deftree/.style={level distance=\ledge,sibling distance=\sibdis}}
\tikzset{edgesp/.style={level distance=#1*\ledge,sibling distance=#1\sibdis}}
\tikzset{labf/.style={mathsc,yshift=-\eca}}
\tikzset{labfs/.style={mathss,yshift=-\eca}}
\tikzset{labr/.style={mathsc,yshift=\eca}}
\tikzset{labrs/.style={mathss,yshift=\eca}}
\tikzset{math mode/.style = {execute at begin node=$, execute at end node=$}}
\tikzset{mathscript mode/.style =%
 {execute at begin node=$\scriptstyle , execute at end node=$}}
\tikzset{math/.style = {execute at begin node=$, execute at end node=$}}
\tikzset{mathsc/.style =%
 {execute at begin node=$\scriptstyle , execute at end node=$}}
\tikzset{mathss/.style =%
 {execute at begin node=$\scriptscriptstyle , execute at end node=$}}
\tikzset{root/.style={draw,circle,inner sep=1pt,execute at begin node=$\bullet,
    execute at end node=$}}
\tikzset{roottest/.style={draw,circle,inner sep=#1pt}}
\tikzset{roots/.style={draw,circle,inner sep=2pt}}
\tikzset{bull/.style={fill,circle,minimum size=2pt,inner sep=0pt}}
\tikzset{leaf/.style={minimum size=2pt,inner sep=1pt}}
\tikzset{leafb/.style={minimum size=0pt,inner sep=0pt}}
\tikzset{intvertex/.style={mathsc,fill,circle,minimum size=0.6ex, inner sep=0pt}}
\tikzset{Reda/.style={-,double distance=0.3ex, draw=black}}
\tikzset{N/.style={-,thin,draw=black}}
\tikzset{Nd/.style={-,dotted, thin,draw=black}}
\newcommand{\prodracine}{%
\begin{tikzpicture}[%
baseline={([yshift=-0.5ex]current bounding box.center)},scale=0.4]
\tikzstyle{every child node}=[mathscript mode,minimum size=0pt, inner sep=0pt]
\node[minimum size=0pt, inner sep=0pt]%[draw,circle,inner sep=0.5pt](root) 
{}
[level distance=1.5em,sibling distance=3ex]
child {node[fill, circle, minimum size=2pt,inner sep=0pt]{}[level distance=1.5em]
  child{ node{}}
  child{ node{}}
};\end{tikzpicture}
}
\DeclareMathOperator{\prac}{\prodracine}
\newcommand{\prodracinered}{%
\begin{tikzpicture}[%
baseline={([yshift=-0.5ex]current bounding box.center)},scale=0.4]
\tikzstyle{every child node}=[mathscript mode,minimum size=0pt, inner sep=0pt]
\node[minimum size=0pt, inner sep=0pt]%[draw,circle,inner sep=0.5pt](root) 
{}
[level distance=1.5em,sibling distance=3ex]
child {node[fill, circle, minimum size=2pt,inner sep=0pt]{}[level distance=1.5em]
  child{ node{}edge from parent [N]}
  child{ node{}edge from parent [N]}
edge from parent [Reda]};\end{tikzpicture}
}
\DeclareMathOperator{\pracd}{\prodracinered}
\newcommand{\arbreaa}[3][1]{%
\begin{tikzpicture}[%
baseline={(current bounding box.center)},scale=#1]
\tikzstyle{every child node}=[mathscript mode,minimum size=0pt, inner sep=0pt]
\node[minimum size=0pt, inner sep=0pt]%[draw,circle,inner sep=0.5pt](root) 
{}
[level distance=1.2em,sibling distance=3ex]
child {node[fill, circle, minimum size=2pt,inner sep=0pt]{}[level distance=1.5em]
  child{ node[leaf]{#2}}
  child{ node[leaf]{#3}}
};\end{tikzpicture}
}
\newcommand{\rootaa}[3][1]{
\begin{tikzpicture}[baseline=(current bounding box.center),scale=#1]
\tikzstyle{every child node}=[mathscript mode,minimum size=0pt, inner sep=0pt]
%[intvertex]%[fill,circle,minimum size=3pt, inner sep=0pt]
\node[roots](root) {}
[deftree]
%[mathscript mode]%[fill,circle,minimum size=3pt, inner sep=0pt]
%\node[draw,circle,inner sep=0.5pt](root) 
%{$\bullet$}
%[level distance=1.5em,sibling distance=3ex]
child {node[fill, circle, minimum size=2pt,inner sep=0pt]
           {}[level distance=1.5em]
  child{ node[leaf]{#2}}
  child{ node[leaf]{#3}}
};
\fill (root.center) circle (1/#1*\lbullet) ;
\end{tikzpicture}
}
\newcommand{\roota}[2][1]{
\begin{tikzpicture}[baseline=(current bounding box.center),scale=#1]
\tikzstyle{every child node}=[mathscript mode,minimum size=0pt, inner sep=0pt]%[intvertex]%[fill,circle,minimum size=3pt, inner sep=0pt]
\node[roots](root) {}
[deftree]
child {node[leaf](1){#2} 
};
\fill (root.center) circle (1/#1*\lbullet) ;
%\node[mathsc, xshift=-1ex] at (root.west) {};
%\node[labf] at (1.south){#2};
\end{tikzpicture}
}
\newcommand{\TLx}[2][0.6]{
\begin{tikzpicture}[baseline=(current bounding box.center),scale=#1]
\tikzstyle{every child node}=[intvertex]
\node[roots](root) {}
[deftree]
child {node(1){} 
}
;%%%
\fill (root.center) circle (1/#1*\lbullet) ;
\node[mathsc, xshift=-1ex] at (root.west) {#2};
\node[labf] at (1.south){0};
\end{tikzpicture} %
}
\newcommand{\TLy}[2][0.6]{
\begin{tikzpicture}[baseline=(current bounding box.center),scale=#1]
\tikzstyle{every child node}=[intvertex]
\node[roots](root) {}
[deftree]
child {node(1){} 
}
;%%%
\fill (root.center) circle (1/#1*\lbullet) ;
\node[mathsc, xshift=-1ex] at (root.west) {#2};
\node[labf] at (1.south){1};
\end{tikzpicture} %
}
\newcommand{\TLxy}[2][0.6]{
\begin{tikzpicture}[baseline=(current bounding box.center),scale=#1]
\tikzstyle{every child node}=[intvertex]
\node[roots](root) {}
[deftree]
child {node{} 
    child{ node(1){}}
    child{ node(2){}} 
}
;%%%
\fill (root.center) circle (1/#1*\lbullet) ;
\node[mathsc, xshift=-1ex] at (root.west) {#2};
\node[labf] at (1.south){0};
\node[labf] at (2.south){1};
\end{tikzpicture} 
}
\newcommand{\TLxxy}[2][0.6]{
  \begin{tikzpicture}[baseline=(current bounding box.center),scale=#1]
 \tikzstyle{every child node}=[intvertex]
 \node[roots](root) {}
 [deftree]
 child {node{}
   child[edgesp=2] {node(1){}}
   child {node{} 
     child{ node(2){}}  
     child{ node(3){}} 
   }
}
;
\fill (root.center) circle (1/#1*\lbullet) ;
\node[mathsc, xshift=-1ex] at (root.west) {#2};
\node[labf] at (1.south){0};
\node[labf] at (2.south){0};
\node[labf] at (3.south){1};
 \end{tikzpicture}
}
\newcommand{\TLxyy}[2][0.6]{
\begin{tikzpicture}[baseline=(current bounding box.center),scale=#1]
\tikzstyle{every child node}=[intvertex]
\node[roots](root) {}
[deftree]
child {node{} 
  child{node{}
    child{ node(1){}}
    child{ node(2){}}}  
  child[edgesp=2]{ node(3){}} 
}
;%%%
\fill (root.center) circle (1/#1*\lbullet) ;
\node[mathsc, xshift=-1ex] at (root.west) {#2};
\node[labf] at (1.south){0};
\node[labf] at (2.south){1};
\node[labf] at (3.south){1};
\end{tikzpicture}
}
\newcommand{\TLxxxy}[2][0.6]{
 \begin{tikzpicture}[baseline=(current bounding box.center),scale=#1]
\tikzstyle{every child node}=[intvertex]
\node[roots](root) {}
[deftree]
child{node{}
  child [edgesp=3]{node(1){}}
  child{node{}
    child [edgesp=2]{node(2){}}
    child {node{} 
      child{ node(3){}}  
      child{ node(4){}} 
    }
  }
};

\fill (root.center) circle (1/#1*\lbullet) ;
\node[mathsc, xshift=-1ex] at (root.west) {#2};
\node[labf] at (1.south){0};
\node[labf] at (2.south){0};
\node[labf] at (3.south){0};
\node[labf] at (4.south){1};
\end{tikzpicture}
}
\newcommand{\TLxxyy}[2][0.6]{
\begin{tikzpicture}[baseline=(current bounding box.center),scale=#1]
\tikzstyle{every child node}=[intvertex]
\node[roots](root) {}
[deftree]
child{node{}
  child[edgesp=3]{node(1){}}
  child {node{} 
    child{node{}
      child{ node(2){}}
      child{ node(3){}}
    }  
    child[edgesp=2]{ node(4){}} 
  }
};
\fill (root.center) circle (1/#1*\lbullet) ;
\node[mathsc, xshift=-1ex] at (root.west) {#2};
\node[labf] at (1.south){0};
\node[labf] at (2.south){0};
\node[labf] at (3.south){1};
\node[labf] at (4.south){1};
\end{tikzpicture} 
+
\begin{tikzpicture}[baseline=(current bounding box.center),scale=#1]
\tikzstyle{every child node}=[intvertex]
\node[roots](root) {}
[deftree]
child {node{} 
  child{node{}
    child[edgesp=2]{node(1){}}
    child{node{}
      child{ node(2){}}
      child{ node(3){}}
    }  
  }
  child[edgesp=3]{ node (4){}} 
};
\fill (root.center) circle (1/#1*\lbullet) ;
\node[mathsc, xshift=-1ex] at (root.west) {#2};
\node[labf] at (1.south){0};
\node[labf] at (2.south){0};
\node[labf] at (3.south){1};
\node[labf] at (4.south){1};
\end{tikzpicture} 
}
\newcommand{\TLxxyya}[2][0.6]{
\begin{tikzpicture}[baseline=(current bounding box.center),scale=#1]
\tikzstyle{every child node}=[intvertex]
\node[roots](root) {}
[deftree]
child{node{}
  child[edgesp=3]{node(1){}}
  child {node{} 
    child{node{}
      child{ node(2){}}
      child{ node(3){}}
    }  
    child[edgesp=2]{ node(4){}} 
  }
};
\fill (root.center) circle (1/#1*\lbullet) ;
\node[mathsc, xshift=-1ex] at (root.west) {#2};
\node[labf] at (1.south){0};
\node[labf] at (2.south){0};
\node[labf] at (3.south){1};
\node[labf] at (4.south){1};
\end{tikzpicture} 
}
\newcommand{\TLxyyy}[2][0.6]{
\begin{tikzpicture}[baseline=(current bounding box.center),scale=#1]
\tikzstyle{every child node}=[intvertex]
\node[roots](root) {}
[deftree]
child {node{} 
  child{node{}
    child{node{}
      child{node (1){}}
      child{ node (2){}}
    }
    child[edgesp=2]{ node (3){}}
  }  
  child[edgesp=3]{ node (4){}} 
};
\fill (root.center) circle (1/#1*\lbullet) ;
\node[mathsc, xshift=-1ex] at (root.west) {#2};
\node[labf] at (1.south){0};
\node[labf] at (2.south){1};
\node[labf] at (3.south){1};
\node[labf] at (4.south){1};
\end{tikzpicture}
}
\newcommand{\TLxyxyy}[2][0.6]{
\begin{tikzpicture}[baseline=(current bounding box.center),scale=#1]
\tikzstyle{every child node}=[intvertex]
\node[roots](root) {}
[deftree]
child{node{}
  child[edgesp=3]{node{}[edgesp=1]
    child{node(1){}}
    child{node(2){}}
  }
  child[edgesp=2] {node{} 
    child[edgesp=1]{node{}
      child{ node(3){}}
      child{ node(4){}}
    }  
    child{ node(5){}} 
  }
};
\fill (root.center) circle (1/#1*\lbullet) ;
\node[mathsc, xshift=-1ex] at (root.west) {#2};
\node[labf] at (1.south){0};
\node[labf] at (2.south){1};
\node[labf] at (3.south){0};
\node[labf] at (4.south){1};
\node[labf] at (5.south){1};
\end{tikzpicture} 
+
\begin{tikzpicture}[baseline=(current bounding box.center),scale=#1]
\tikzstyle{every child node}=[intvertex]
\node[roots](root) {}
[deftree]
child {node{} 
  child{node{}
    child{node{}
      child[edgesp=2]{node(1){}}
      child{node{}
	child{node(2){}}
	child{ node(3){}}
      }
    }
    child[edgesp=3]{ node(4){}}
  }  
  child[edgesp=4]{ node(5){}} 
};
\fill (root.center) circle (1/#1*\lbullet) ;
\node[mathsc, xshift=-1ex] at (root.west) {#2};
\node[labf] at (1.south){0};
\node[labf] at (2.south){0};
\node[labf] at (3.south){1};
\node[labf] at (4.south){1};
\node[labf] at (5.south){1};
\end{tikzpicture} 
+
\begin{tikzpicture}[baseline=(current bounding box.center),scale=#1]
\tikzstyle{every child node}=[intvertex]
\node[roots](root) {}
[deftree]
child {node{}
  child{node{}
    child[edgesp=3]{node(1){}}
    child{node{}
      child{node{}
	child{node(2){}}
	child{ node(3){}}
      }
      child[edgesp=2]{ node(4){}}
    }  
  }
  child[edgesp=4]{ node(5){}} 
};
\fill (root.center) circle (1/#1*\lbullet) ;
\node[mathsc, xshift=-1ex] at (root.west) {#2};
\node[labf] at (1.south){0};
\node[labf] at (2.south){0};
\node[labf] at (3.south){1};
\node[labf] at (4.south){1};
\node[labf] at (5.south){1};
\end{tikzpicture} 
}
\newcommand{\TLraxyxyy}[5][0.6]{
\begin{tikzpicture}[remember picture, baseline=(current bounding box.center),scale=#1]
\tikzstyle{every child node}=[intvertex]
\node[roots](#5root) {}
[deftree]
child{node{}
  child[edgesp=3]{node{}[edgesp=1]
    child{node(#5l1){}}
    child{node(#5l2){}}
  }
  child[edgesp=2] {node{} 
    child[edgesp=1]{node{}
      child{ node(#5l3){}}
      child{ node(#5l4){}}
    }  
    child{ node(#5l5){}
          edge from parent node(#3){}} 
  }
};
\fill (#5root.center) circle (1/#1*\lbullet) ;
\node[mathsc, xshift=-1ex] at (#5root.west) {#2};
\node[labf] at (#5l1.south){0};
\node[labf] at (#5l2.south){1};
\node[labf] at (#5l3.south){0};
\node[labf] at (#5l4.south){1};
\node[labf] at (#5l5.south){1};
\end{tikzpicture} 
+
\begin{tikzpicture}[remember picture,baseline=(current bounding box.center),scale=#1]
\tikzstyle{every child node}=[intvertex]
\node[roots](#5rrroot) {}
[deftree]
child {node{}
  child{node{}
    child[edgesp=3]{node(#5lll1){}}
    child{node{}
      child{node{}
	child{node(#5lll2){}}
	child{ node(#5lll3){}}
      }
      child[edgesp=2]{ node(#5lll4){}
                       edge from parent node(eb){}}
    }  
  }
  child[edgesp=4]{ node(#5lll5){}} 
};
\fill (#5rrroot.center) circle (1/#1*\lbullet) ;
\node[mathsc, xshift=-1ex] at (#5rrroot.west) {#2};
\node[labf] at (#5lll1.south){0};
\node[labf] at (#5lll2.south){0};
\node[labf] at (#5lll3.south){1};
\node[labf] at (#5lll4.south){1};
\node[labf] at (#5lll5.south){1};
\end{tikzpicture} 
+
\begin{tikzpicture}[remember picture,baseline=(current bounding box.center),scale=#1]
\tikzstyle{every child node}=[intvertex]
\node[roots](#5rroot) {}
[deftree]
child {node{} 
  child{node{}
    child{node{}
      child[edgesp=2]{node(#5ll1){}}
      child{node{}
	child{node(#5ll2){}}
	child{ node(#5ll3){}}
      }
    }
    child[edgesp=3]{ node(#5ll4){}}
  }  
  child[edgesp=4]{ node(#5ll5){}} 
};
\fill (#5rroot.center) circle (1/#1*\lbullet) ;
\node[mathsc, xshift=-1ex] at (#5rroot.west) {#2};
\node[labf] at (#5ll1.south){0};
\node[labf] at (#5ll2.south){0};
\node[labf] at (#5ll3.south){1};
\node[labf] at (#5ll4.south){1};
\node[labf] at (#5ll5.south){1};
\end{tikzpicture} 
}
\title{Multiple zeta value cycles in low weight}
\author{Isma{\"e}l Soud{\`e}res}
\thanks{}
\address{Fachbereich Mathematik \\
Universität Duisburg-Essen, Campus Essen \\
Universitätsstrasse 2\\
45117 Essen\\
Germany \\
ismael.souderes@uni-due.de}
\address{
Max-Planck-Insititut für Mathematik, Bonn\\
Vivatsgasse 7 \\ 53111 Bonn \\
Germany \\
souderes@mpim-bonn.mpg.de }
\date{\today}
\begin{document}
%\begin{multicols}{2}
\thanks{This work has been partially supported by DFG grant SFB/TR45 and by
  Prof. Levine Humboldt Professorship. 
I would like to thank P. Cartier for his
  attention to my work and H. Gangl for all his help, his comments and his
 stressing demand for me using tree related structures which allowed me to
 understand the Lie-like underlying combinatorics. Finally, none of this would
 have been possible without M. Levine patience, his explanations and deep
 inputs. This paper has been finalized during my stay at the MPIM and I am
 grateful to the MPIM for providing ideal working conditions and support.}
\begin{abstract}
In a recent work, the author has constructed two families of algebraic cycles
in Bloch's cycle algebra over $\ps$ that are expected to correspond to multiple
polylogarithms in one variable and have a good specialization at $1$ related to
multiple zeta values.

This is a short presentation, by the way of toy examples in low weight ($\leqs
5$), of this construction and could serve as an introduction to the general
setting. Working in low weight also makes it possible to push (``by hand'') the
construction further. In particular, we will not only detail the construction of
the cycles but we will also associate to these cycles explicit elements in the
bar construction over the cycle algebra and make as explicit as possible the
``bottom-left'' coefficient of the Hodge realization period matrix. That is, in
a few relevant cases we will associated to each cycle an integral showing how the
specialization at $1$ is related to multiple zeta values. We will be
particularly interested in  a new weight $3$ example corresponding to $-\zeta(2,1)$.
 \end{abstract}
\maketitle

\tableofcontents

\section{Introduction}

The multiple polylogarithm functions were defined in \cite{PAGGon} by the power
series
\[
Li_{k_1, \ldots, k_m}(z_1,\ldots, z_m)=\sum_{n_1> \cdots > n_m>0} 
\frac{z_1^{n_1}}{n_1^{k_1}} \, \frac{z_2^{n_2}}{n_2^{k_2}} \cdots
\frac{z_m^{n_m}}{n_m^{k_m}}
\qquad (z_i \in \C, \, |z_i|<1).
\]
They admit an analytic continuation to a Zariski open subset of $\C^m$. The case
$m=1$ is nothing but the classical \emph{polylogarithm} functions. The case
$z_1=z$ and $z_2=\cdots=z_m=1$ gives a one variable version of multiple
polylogarithm functions 
\[
Li^{\C}_{k_1, \ldots , k_m}(z)=Li_{k_1, \ldots, k_m}(z,1,\ldots, 1)=\sum_{n_1> \cdots > n_m>0} 
\frac{z^{n_1}}{n_1^{k_1} n_2^{k_2} \cdots n_m^{k_m}}.
\] 
When $k_1$ is greater or equal to $2$, the series converge as $z$ goes to $1$ and
one recovers the multiple zeta value
\[
\zeta(k_1,\ldots, k_m) =Li^{\C}_{k_1, \ldots , k_m}(1) = 
Li_{k_1, \ldots, k_m}(1,\ldots, 1)=\sum_{n_1> \cdots > n_m>0} 
\frac{1}{n_1^{k_1} n_2^{k_2} \cdots n_m^{k_m}}.
\]

To the tuple of integers $(k_1, \ldots, k_m)$ of weight $n=\sum k_i$, we can
associate a tuple of $0$'s 
and $1$'s
\[
(\ve_n,\ldots, \ve_1):=(\underbrace{0,\ldots , 0}_{k_1-1\mx{ \scriptsize  times}}, 1,\ldots
  ,\underbrace{0,\ldots ,0}_{k_m-1\mx{ \scriptsize  times}},
  1)
\]
which allows to write multiple polylogarithms as iterated integrals
($z_i\neq 0$ for all $i$):
\[
Li_{k_1,\ldots, k_m}^{\gamma}(z_1, \ldots,z_m)=(-1)^m \int_{\Delta_{\gamma}} 
\frac{dt_1}{t_1-\ve_1 x_1} \w \cdots \w
\frac{dt_n}{t_n-\ve_n x_n}
\]
where $\gamma$ is a path from $0$ to $1$ in $\C \sm \{x_1, \ldots, x_n\}$, the
integration domain $\Delta_{\gamma}$ is the associated real simplex consisting
of all $n$-tuples of points $(\gamma(t_1), \ldots, \gamma(t_n))$ with $t_i<t_j$
for $i<j$ and where we have set %
$x_n=z_1^{-1}$, $x_1=(z_1\cdots z_m)^{-1}$ and where, for all $i$ such that 
 $k_1+\cdots+k_{l-1}+1\leqs i<
k_1+\cdots+k_l$, we have set $x_{n-i}=(z_1\cdots z_l)^{-1}$. Classically,
$\gamma$ is the straight path from $0$ to $1$ : $\gamma(t)=t$ and in this case
the superscript will be omitted..  

Bloch and Kriz in \cite{BKMTM} have constructed an
algebraic cycle avatar of the classical polylogarithm function.
More recently in \cite{GanGonLev05}, Gangl, Goncharov and Levin, using a
combinatorial approach, have 
built algebraic cycles corresponding to the multiple polylogarithm values
$Li_{k_1,\ldots, k_m}(z_1, \ldots, z_m)$ with parameters $z_i$ satisfying in
particular that all the $z_i$ but $z_1$ have to be different from $1$
and their methods do not give algebraic cycles corresponding to multiple zeta
values.

The goal of the article \cite{SouMPCC} was to develop a geometric construction for multiple
polylogarithm cycles removing the previous obstruction which will allow to have
multiple zeta cycles.

A general idea underlying this project consists of looking for cycles fibered over a larger
base and not just point-wise cycles for some fixed parameter $(z_1,
...,z_m)$.  Levine in \cite{LEVTMFG} shows that there exists a short
exact sequence relating the Bloch-Kriz Hopf algebra over $\Sp(\Q)$, its
relative version over $\ps$ and the Hopf algebra associated to Goncharov and
Deligne's motivic fundamental group over $\ps$ which contains motivic avatars of 
iterated integrals associated to the multiple polylogarithms in one variable. 

As this one variable version of multiple polylogarithms gives multiple zeta
values for $z=1$, it is natural to investigate first the case of the Bloch-Kriz
construction over $\ps$ in order to 
obtain algebraic cycles corresponding to multiple polylogarithms in one variable
with a ``good specialization'' at $1$.

This paper presents the main geometric tools in order to construct such
algebraic cycles and applies the general construction described in \cite{SouMPCC}
to concrete examples up to weight $5$. In these particular cases, one can
easily go further in the description, lifting the obtained cycles to the bar
constructions over the Bloch's cycle algebra, describing the corresponding Bloch-Kriz
motive and computing some associated integrals 
related to the Hodge realization. Those integrals give back multiple
polylogarithms in one variable and their specialization at $1$  give multiple
zeta values.

The structure of the paper is organized as follows. In section 2 we review shortly
the combinatorial context as it provides interesting relations for the bar
elements associated to the cycles and an interesting relation with Goncharov's 
motivic coproduct for motivic iterated integrals. Section 3 is devoted to the
geometric situation and to the construction of the cycles after a presentation
of the Bloch's cycle 
algebra. Section 4, presents a combinatorial representation of the
constructed cycles as parametrized cycles.

Section 5 recalls the definition of the bar construction over a commutative
differential graded algebra and associates 
elements in the bar constructions (and a corresponding motive in the Bloch-Kriz
construction) to the low weight examples of cycles. Finally in section 6, I
follow Gangl, Goncharov and Levin's 
algorithm associating  an integral to some of the low weight algebraic cycles
previously described.
\section{Combinatorial situation}
In this paper a \emph{tree} is a planar finite tree whose internal vertices have
valency $\geqs 3$ and where at each vertex a cyclic ordering of the incident
edges is given. A \emph{rooted} tree has a distinguished external vertex called
the \emph{root} and a \emph{forest} is a disjoint union of trees.

Trees will be drawn with the convention that the cyclic ordering of the edges
around an internal vertex is displayed in counterclockwise direction. The root
vertex in the case of a rooted tree is displayed at the top.
\subsection{Trees, Lie algebras and Lyndon words}
 
Let $\Ttr$ be the $\Q$-vector space generated by rooted trivalent trees with
leaves decorated by $0$ and $1$ modulo the relation
\[
 \begin{tikzpicture}[baseline=(current bounding box.center),scale=0.8]
\tikzstyle{every child node}=[mathscript mode, inner sep=0.5pt]%
%[fill,circle,minimum size=3pt, inner sep=0pt]
\node[inner sep=0.5pt]%[draw,circle,inner sep=0.5pt](root)
 {$\scriptstyle T_1$}
[level distance=1.5em,sibling distance=3ex]
child {node[fill, circle, minimum size=2pt,inner sep=0pt]{}[level distance=1.5em]
  child{ node{T_2}}
  child{ node{T_3}}
};
\end{tikzpicture}
=-
\begin{tikzpicture}[baseline=(current bounding box.center),scale=0.8]
\tikzstyle{every child node}=[mathscript mode, inner sep=0.5pt]%
%[fill,circle,minimum size=3pt, inner sep=0pt]
\node[ inner sep=0.5pt]%[draw,circle,inner sep=0.5pt](root) 
{$\scriptstyle T_1$}
[level distance=1.5em,sibling distance=3ex]
child {node[fill, circle, minimum size=2pt,inner sep=0pt]{}[level distance=1.5em]
  child{ node{T_3}}
  child{ node{T_2}}
};\end{tikzpicture}
\]
where the $T_i$'s are subtrees (and $T_1$ contains the root of the global tree).
Note that in the above definition, the root is not decorated.

Define on $\Ttr$ the internal law $\prac$ by
\[
\ds \rootaa{T_1}{T_2} \prac \rootaa{T_3}{T_4}= 
\begin{tikzpicture}[baseline=(current bounding box.center),scale=0.75]
\tikzstyle{every child node}=[mathscript mode, inner sep=0pt]
\node[draw,circle,inner sep=0.5pt](root) 
{$\bullet$}
[level distance=1.5em,sibling distance=3ex]
child {
 node[bull]
    {}[level distance=1.5em,sibling distance=6ex]
  child{ node[bull]{}[level distance=1.5em,sibling distance=3ex]
           child{node[leaf]{T_1}}
           child{node[leaf]{T_2}}
       }
  child{ node[bull]{}[level distance=1.5em,sibling distance=3ex]
          child{node[leaf]{T_3}}
          child{node[leaf]{T_4}}
       }
};\end{tikzpicture}.
\]
and extend it by bilinearity. One remarks that by definition $\prac$ is
antisymmetric.  Identifying $\{0, 1\}$ with  $\{X_0 , X_1 \}$ by the
obvious morphism and using the correspondence $\prac \leftrightarrow [ , ]$,
this internal law allows us to identify the free Lie algebra $\Lie(X_0,X_1)$
with $\Ttr$ modulo the Jacobi identity. Thus one can identify the (graded) dual
of $\Lie(X_0,X_1)$ as a subspace of $\Ttr$
\[
\Lie(X_0,X_1)^* \subset \Ttr.
\]

A Lyndon word in $0$ and $1$ is a word in $0$ and $1$ strictly smaller than any
of its nonempty proper right factors for the lexicographic order with $0<1$ (for
more details, see \cite{ReuFLA93}). The
standard factorization $[W]$ of a Lyndon word $W$ is defined inductively by
$[0]=X_0$, $[1]=X_1$ and otherwise by  $[W]=[[U],[V]]$ with $W=UV$, $U$ and $V$ nontrivial
and such that $V$ is minimal. The sets of Lyndon brackets $\{[W]\}$, that is Lyndon
words in standard factorization, form a basis of $\Lie(X_0,X_1)$ which can then
be used to write the Lie bracket 
\[
[[U],[V]]=
\sum_{\substack{W\, \mx{{\scriptsize Lyndon}} \\ \mx{{\scriptsize words}} }}
\alpha_{U,V}^W[W].
\]
with $U<V$ Lyndon words.
\begin{exm}Lyndon words in letters $0<1$ in lexicographic order are up to weight
  $5$: 
\begin{multline*}
0<00001<0001<00011<001<00101<0011 <00111<\\ 01<01011<011<0111<01111<1
\end{multline*}
\end{exm}

%\subsection{Trees dual to Lyndon brackets}
The above identification of $\Lie(X_0,X_1)$ as a quotient of $\Ttr$ and the
basis of Lyndon brackets allows us to define a family of trees dual to the Lyndon bracket
basis beginning with $\T 0=\roota 0$ and $\T 1=\roota 1$ and then setting
\begin{equation}\label{def:TW}
\T{W}=\sum_{U<V} \alpha_{U,V}^W \T{U} \prac \, \T{V}.
\end{equation}

\begin{exm} We give below the corresponding dual trees in weight $1$, $2$ and $3$
\[
%%%%%%%%%%%%%%%%%%%%%%%%%%%%%%% 0
\T{0}=\roota{0} ,\quad \T{1} = \roota 1 ,\quad \T{01}=\TLxy{} ,\quad
\T{001}=\TLxxy{},\quad 
\T{011}=\TLxyy{}.
\]
In weight $4$ appears the first linear combination
\[
\T{0001}=\TLxxxy{}
,\quad 
\T{0011}=\TLxxyy{}
,\quad
\T{0111}=\TLxyyy{},
\]
due to the fact that both $[0]\w[011]$ and $[001]\w [1]$ are mapped onto 
$[0011]=[X_0,[[X_0,X_1],X_1]]$ under the bracket map. 

In weight $5$, we will concentrate our attention to the two following examples
\[
\T{00101}=
\begin{tikzpicture}[baseline=(current bounding box.center),scale=0.6]
\tikzstyle{every child node}=[intvertex]
\node[roots](root) {}
[deftree]
child{node{}
  child[edgesp=2]{node{}
    child[edgesp=2]{node(3){}
      edge from parent [N]}
    child[edgesp=1] {node{}  
       child {node(4){}  
        edge from parent [N]}
       child {node(5){} 
        edge from parent [N]}
    edge from parent [N]}
  edge from parent [N]
  }
  child[edgesp=3] {node{}  
     child[edgesp=1] {node(1){}  
      edge from parent [N]}
     child[edgesp=1] {node(2){} 
      edge from parent [N]}
  edge from parent [N]
  }
edge from parent [N]
}
;
%%%
\fill (root.center) circle (1/0.6*\lbullet) ;
%\node[mathsc, xshift=-1ex] at (root.west) {t};
\node[labf] at (3.south){0};
\node[labf] at (4.south){0};
\node[labf] at (5.south){1};
%;
%%%
%\fill (root.center) circle (1/1*\lbullet) ;
%\node[mathsc, xshift=-1ex] at (root.west) {t};
\node[labf] at (1.south){0};
\node[labf] at (2.south){1};
\end{tikzpicture} %
-
\begin{tikzpicture}[baseline=(current bounding box.center),scale=0.6]
\tikzstyle{every child node}=[intvertex]
\node[roots](root) {}
[deftree]
child{node{}
  child{node{} 
    child[edgesp=3]{node(2){}
      edge from parent [N]}
    child{node{}
      child[edgesp=2]{node(3){}
        edge from parent [N]}
      child[edgesp=1] {node{}  
         child[edgesp=1] {node(4){}  
          edge from parent [N]}
         child[edgesp=1] {node(5){} 
          edge from parent [N]}
      edge from parent [N]}
    edge from parent [N]}
  edge from parent [N]
  }
  child[edgesp=4]{node(1){}
    edge from parent [N]
  }
edge from parent [N]
}
;
%%%
\fill (root.center) circle (1/0.6*\lbullet) ;
%\node[mathsc, xshift=-1ex] at (root.west) {t};
\node[labf] at (1.south){1};
\node[labf] at (2.south){0};
\node[labf] at (3.south){0};
\node[labf] at (4.south){0};
\node[labf] at (5.south){1};
\end{tikzpicture}
\] 
%\qquad \mx{
and 
%}\qquad
\[
\T{01011}=\TLxyxyy{}.
\]
\end{exm}

\subsection{Another differential on trees}
\renewcommand{\Td}{\Fq}

In \cite{GanGonLev05}, Gangl, Goncharov and Levin introduced a
  differential $\dc$ on trees which reflects the differential in the Bloch's cycle
  algebra $\mc N_{\Sp(\Q)}$ (see Section \ref{sec:cycles}). In their work 
they have shown that
 some particular linear combinations of trivalent trees attached to
 decompositions of polygons have decomposable
 differential. More precisely, the differential of these particular linear
 combinations of trees is a linear combination of products of the same type of
 linear combinations of trees. The elements $\T W$ have a similar behavior
 under $\dc$.

One begins by endowing trees with an extra structure.
\begin{defn}\begin{itemize}
\item  An \emph{orientation} $\om$ of a tree $T$  (or a forest) is a numbering of the
 edges. That is if $T$ has $n$ edges and if $E(T)$ denotes its set of edges ,
 $\om$ is a map $E(T) \lra \{1, \ldots, n\}$.
\item Let $e(T)$ denote the cardinality of $E(T)$, that is the number of edges of $T$,
  and let $we(T)$, the weight of $T$, be the number of leaves of $T$. The degree of $T$
  is defined by $deg(T)=2we(T)-e(T)$. We extend these definitions to forests by linearity.
\end{itemize}
\end{defn}

\begin{defn}~

\begin{itemize}
\item Let $\ds V^{t}$ be the $\Q$-vector space generated by a unit $1$ and
  oriented forests  of
  rooted trees $T$  with root vertex decorated by: $t$, $0$ or $1$
and  leaves decorated by $0$ or $1$.  
\item Let $\cdot$ denote the product induced by
the disjoint union of the trees and shift of the numbering for the orientation
of the second factor. That is the product of $(F_1, \om_1)$ and $(F_2,\om_2)$ is
the forest $F=F_1\sqcup F_2$ together with the numbering $\om$ satisfying
$\om|_{E(F_1)}=\om_1$ and $\om|_{E(F_2)}=\om_2+n_1$ where $n_1=e(F_1)$ is the number of edges
in $F_1$. Note that here, by convention, the empty tree is $0$ and the unit for
$\cdot$ is the extra generator $1$.
\item Define $\Td$ to be the  algebra $V^t$ modulo the relations:
\[
(T,\sigma(\om))=\ve(\sigma)(T,\om),\qquad 
\begin{tikzpicture}[baseline=(current bounding box.center),scale=0.6]
\tikzstyle{every child node}=[mathss]%[fill,circle,minimum size=3pt, inner sep=0pt]
\node[roottest=2](root) {}
[level distance=1.5em,sibling distance=3ex]
child {node[fill, circle, minimum size=2pt,inner sep=0pt]{}[level distance=1.5em]
  child{ node[leaf]{T_1}}
  child{ node[leaf]{T_2}}
};
\node[mathss, xshift=-1ex] at (root.west) {0};
\fill (root.center) circle (2pt) ;
\end{tikzpicture} =0\qquad \text{and} \qquad  
\begin{tikzpicture}[baseline=(current bounding box.center),scale=0.6]
\tikzstyle{every child node}=[mathss]%[fill,circle,minimum size=3pt, inner sep=0pt]
\node[roottest=2](root) {}
[level distance=2em,sibling distance=3ex]
child {node[fill, circle, minimum size=2pt,inner sep=0pt](1){}[level distance=1.5em]
};
\node[mathss, xshift=-1ex] at (root.west) {1};
\node[labfs] at (1) {0}; 
\fill (root.center) circle (2pt) ;
\end{tikzpicture} =0.
\]
for any permutation $\sigma$ and where $\ve(\sigma)$ denotes the usual signature
of the permutation $\sigma$. 
%\item 
\end{itemize}
The algebra $\Td$ endowed with the product $\cdot$ is graded commutative because
the orientation introduces signs into the usual disjoint union. Note
that for any forest $F$ one has $(-1)^{\deg(F)}=(-1)^{e(F)}$.
\end{defn}
\begin{rem}
\begin{enumerate}
\item There is an obvious direction on the edges of a rooted tree: away from
  the root.
\item A rooted tree comes with a canonical numbering, starting from the root
  edge and induced by the cyclic ordering at each vertex.
\end{enumerate}
\end{rem}
\newcommand{\sca}{1.0}
\renewcommand{\sca}{1.5}
\begin{exm} With our convention, 
an example of this canonical
ordering is shown at Figure \ref{orderT}; we recall that by convention we draw
trees with the root at the top and the cyclic order at internal vertices
counterclockwise.

\begin{figure}[htb]
\begin{tikzpicture}[baseline=(current bounding box.center),scale=\sca]
\tikzstyle{every child node}=[intvertex]
\node[roots](root) {}
[deftree]
child {node{} 
  child{node{}
    child{ node(1){}
    edge from parent node[mathsc,left]{e_3}
    }
    child{ node(2){}
    edge from parent node[mathsc,right]{e_4}
    }
    edge from parent node[mathsc, left]{e_2}
  }  
  child[edgesp=2]{ node(3){}
    edge from parent node[mathsc,auto]{e_5}
  } 
    edge from parent node[mathsc,auto]{e_1}
}
;%%%
\fill (root.center) circle (1/\sca*\lbullet) ;
\node[mathsc, xshift=-1ex] at (root.west) {t};
\node[labf] at (1.south){0};
\node[labf] at (2.south){1};
\node[labf] at (3.south){1};
\end{tikzpicture}
\caption{A tree with its canonical orientation, that is the canonical
  numbering of its edges.} 
\label{orderT}
\end{figure}
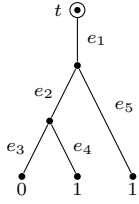
\end{exm}
\renewcommand{\sca}{1.0}
Now, we define on $\Td$ a differential
satisfying $d^2=0$ and the Leibniz rule
\[d(
(F_1,\om_1)\cdot (F_2,\om_2))=d((F_1,\om_1))\cdot (F_2,\om_2)
+(-1)^{e(F_1)}(F_1,\om_1)\cdot d((F_2,\om_2)).
\]

The set of rooted planar trees decorated as above endowed with their canonical
orientation forms a set of 
representatives for the permutation relation and it generates $\Fq$ as an
algebra. Hence, we will define this differential first on these trees and then
extend the definition by the Leibniz rule. 

The differential of an oriented tree $(T, \om)$ is a linear combination of oriented
forests where the trees appearing arise by contracting an edge  of $T$ and fall
into two types depending on whether the edge is internal or not. We will need the
notion of splitting.
\begin{defn}A \emph{splitting} of a tree $T$ at an internal vertex $v$  is the
  disjoint union of the trees which arise as $T_i \cup v$ where 
  the $T_i$ are the connected components of $T \sm v$. Moreover
\begin{itemize}
\item the planar structure of
  $T$ and its decorations of leaves induce on each  $T_i
  \cup v$ a planar structure  and decorations of leaves ;
\item an ordering of the edges of $T$ induces an orientation of the forest $\sqcup_i
(T_i\cup v)$;
\item if $T$ has a root $r$ then $v$ becomes the root for all $T_i\cup v$ which
  do not contain $r$, and if $v$ has a decoration then it keeps its decoration
  in all the $T_i \cup v$.
\end{itemize} 
 \end{defn}
\begin{defn} Let $e$ be an edge of a tree $T$. The contraction of $T$ along $e$
  denoted $T/e$ is given as follows:
\begin{enumerate}
\item If the tree consists of a single edge, its contraction is the empty tree.
\item If $e$ is an internal edge, then $T/e$ is the tree obtained from $T$ by
  contracting $e$ and identifying the incident vertices to a single vertex.
\item If $e$ is the edge containing the root vertex then $T/e$ is the forest
  obtained by first contracting $e$ to the internal incident vertex $w$ (which
  inherits the decoration of the root) and then by splitting at $w$;
  $w$ becoming the new root of all trees in the forest $T/e$.
\item If $e$ is an external edge not containing the root vertex then $T/e$ is the forest
  obtained as follows: first one  contracts $e$ to the internal incident vertex $w$ (which
  inherits the decoration of the leaf) and then one performs a splitting at $w$. 
\item If  $T$ is endowed with its  canonical
orientation  $\om$ there is a natural
orientation $i_e\om$ on $T/e$ given as follows :
\[\forall f \,\in E(T/e)\quad 
\begin{array}{ll}
i_e\om(f)=\om(f) & \mx{if }\om(f)<\om(e) \\
i_e\om(f)=\om(f)-1& \mx{if }\om(f)>\om(e).
\end{array}
\]
\end{enumerate}
\end{defn}
\begin{exm} Two examples are given below. In Figure \ref{rootcont}, one contracts
  the root vertex and in Figure \ref{leafcont}, a leaf is contracted. 
\newcommand{\scont}{1.5}
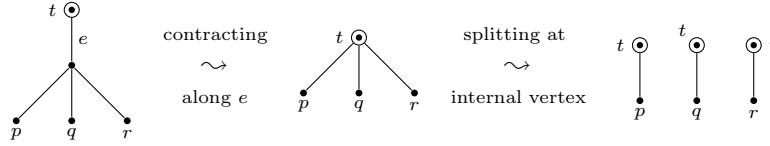
\begin{figure}[!htb]
\begin{tikzpicture}[baseline=(current bounding box.center),scale=\scont]
\tikzstyle{every child node}=[intvertex]%[fill,circle,minimum size=3pt, inner sep=0pt]
\node[roottest=2](root) {}
[deftree]
child {node{}
  child{node(1){}}
  child{node(2){}}
  child{node(3){}}
  edge from parent node[xshift=1ex, mathsc]{e}
};
\node[mathsc, xshift=-1ex] at (root.west) {t};
\node[labf] at (1.south) {p};
\node[labf] at (2.south) {q};
\node[labf] at (3.south) {r};
%\node[labfs] at (1) {0}; 
\fill (root.center) circle (1/\scont*\lbullet) ;
\end{tikzpicture} 
$\begin{array}{c}
\scriptstyle \textrm{contracting}\\ \leadsto \\\scriptstyle   \textrm{along }e\end{array}$
\begin{tikzpicture}[baseline=(current bounding box.center),scale=\scont]
\tikzstyle{every child node}=[intvertex]%[fill,circle,minimum size=3pt, inner sep=0pt]
\node[roottest=2](root) {}
[deftree]
%child {node{}
  child{node(1){}}
  child{node(2){}}
  child{node(3){}} ;
%};
\node[mathsc, xshift=-1ex] at (root.west) {t};
\node[labf] at (1.south) {p};
\node[labf] at (2.south) {q};
\node[labf] at (3.south) {r};
%\node[labfs] at (1) {0}; 
\fill (root.center) circle (1/\scont*\lbullet) ;
\end{tikzpicture} 
$\begin{array}{c}\scriptstyle{\textrm{splitting at }}\\
{\leadsto}\\
\scriptstyle \textrm{internal vertex}\end{array}$
\begin{tikzpicture}[baseline=(current bounding box.center),scale=\scont]
\tikzstyle{every child node}=[intvertex]%[fill,circle,minimum size=3pt, inner sep=0pt]
\node(ancre){};
\node[roottest=2,xshift=-5ex] (root) at (ancre){}
[deftree]
  child{node(1){}
};
%%%
\node[roottest=2,xshift=0ex] (roott) at (ancre){}
[deftree]
  child{node(2){}
};
\node[roottest=2,xshift=5ex] (roottt) at (ancre){}
[deftree]
  child{node(3){}
};
\node[mathsc, xshift=-1ex] at (root.west) {t};
\node[mathsc, yshift=0.8ex, xshift=-0.8ex] at (roott.130) {t};
\node[mathsc, xshift=1ex] at (roottt.east) {t};
\node[labf] at (1.south) {p};
\node[labf] at (2.south) {q};
\node[labf] at (3.south) {r};
%\node[labfs] at (1) {0}; 
\fill (root.center) circle  (1/\scont*\lbullet) ;
\fill (roott.center) circle  (1/\scont*\lbullet) ;
\fill (roottt.center) circle  (1/\scont*\lbullet) ;
\end{tikzpicture}
\caption{Contracting the root}  
\label{rootcont}
\end{figure}
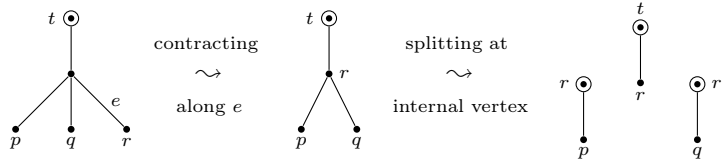
\begin{figure}[!htb]
\begin{tikzpicture}[baseline=(current bounding box.center),scale=\scont]
\tikzstyle{every child node}=[intvertex]%[fill,circle,minimum size=3pt, inner sep=0pt]
\node[roottest=2](root) {}
[deftree]
child {node{}
  child{node(1){}}
  child{node(2){}}
  child{node(3){}
        edge from parent node[xshift=1.5ex, mathsc]{e}}
};
\node[mathsc, xshift=-1ex] at (root.west) {t};
\node[labf] at (1.south) {p};
\node[labf] at (2.south) {q};
\node[labf] at (3.south) {r};
%\node[labfs] at (1) {0}; 
\fill (root.center) circle  (1/\scont*\lbullet) ;
\end{tikzpicture}$\begin{array}{c}\scriptstyle{\textrm{contracting }}\\
{\leadsto}\\
\scriptstyle \textrm{along }e\end{array}$ 
\begin{tikzpicture}[baseline=(current bounding box.center),scale=\scont]
\tikzstyle{every child node}=[intvertex]%[fill,circle,minimum size=3pt, inner sep=0pt]
\node[roottest=2](root) {}
[deftree]
child {node(3){}
  child{node(1){}}
  child{node(2){}}
%  child{node(3){}} ;
};
\node[mathsc, xshift=-1ex] at (root.west) {t};
\node[labf] at (1.south) {p};
\node[labf] at (2.south) {q};
\node[mathsc,xshift=1ex] at (3.east) {r};
%\node[labfs] at (1) {0}; 
\fill (root.center) circle  (1/\scont*\lbullet) ;
\end{tikzpicture} $\begin{array}{c}\scriptstyle{\textrm{splitting at }}\\
{\leadsto}\\
\scriptstyle \textrm{internal vertex}\end{array}$
\begin{tikzpicture}[baseline=(current bounding box.center),scale=\scont]
\tikzstyle{every child node}=[intvertex]%[fill,circle,minimum size=3pt, inner sep=0pt]
\node(ancre){};
\node[roottest=2,xshift=-5ex] (root) at (ancre){}
[deftree]
  child{node(1){}
};
%%%
\node[roottest=2,xshift=5ex] (roott) at (ancre){}
[deftree]
  child{node(2){}
};
\node[roottest=2,yshift=5ex] (roottt) at (ancre){}
[deftree]
  child{node(3){}
};
\node[mathsc, xshift=-1ex] at (root.west) {r};
\node[mathsc, xshift=1ex] at (roott.east) {r};
\node[mathsc, yshift=1ex] at (roottt.north) {t};
\node[labf] at (1.south) {p};
\node[labf] at (2.south) {q};
\node[labf] at (3.south) {r};
%\node[labfs] at (1) {0}; 
\fill (root.center) circle  (1/\scont*\lbullet) ;
\fill (roott.center) circle  (1/\scont*\lbullet) ;
\fill (roottt.center) circle  (1/\scont*\lbullet) ;
\end{tikzpicture}  
\caption{Contracting a leaf} 
\label{leafcont} 
\end{figure}
\end{exm}

\begin{defn}
Let $(T,\om)$ be a tree endowed with its canonical orientation, one defines $\dc(T,w)$ as 
\[
\dc(T,\om)=\sum_{e \in E(T)} (-1)^{\om(e)-1}(T/e, i_e\om).
\] 
One extends $\dc$ to all oriented trees by the relation 
 $ 
\dc(T,\sigma \circ \om)=\ve(\sigma)\dc(T,\om)
$ 
and to $\Fq$ by linearity and the Leibniz rule.

In particular $\dc$ maps a tree with at most one edge to $0$ (which corresponds
by convention to the empty tree).
\end{defn}
As proved in \cite{GanGonLev05}, $\dc$, extended with the Leibniz rule, induces
a differential on $\Fq$.  
\begin{prop} The map $\dc : \Fq \lra \Fq$ makes $\Fq$ into a commutative
  differential graded algebra. In particular $\dc^2=0$.
\end{prop}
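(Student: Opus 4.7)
My plan is to verify three points, after specifying the grading of $\Fq$ by the number of edges of a forest. \emph{(i)} The product $\cdot$ makes $\Fq$ into a graded commutative algebra. \emph{(ii)} The map $\dc$, defined on canonically oriented trees and extended by $\ve(\sigma)$-equivariance and the Leibniz rule, is well-defined on $\Fq$. \emph{(iii)} $\dc^{2}=0$. The main difficulty is \emph{(iii)}; the rest is bookkeeping.

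For \emph{(i)}: the orientations associated to $(F_1,\om_1)\cdot (F_2,\om_2)$ and to $(F_2,\om_2)\cdot (F_1,\om_1)$ on the same underlying forest $F_1\sqcup F_2$ differ by the cyclic permutation shifting the last $n_2 = e(F_2)$ labels to the front; its signature is $(-1)^{n_1 n_2}$, so the permutation relation exactly produces the Koszul sign of graded commutativity. For \emph{(ii)}: by Leibniz it suffices to test on canonically oriented trees. Writing any permutation as a product of adjacent transpositions $\tau=(i,i+1)$, I would then check that replacing $\om$ by $\tau\circ\om$ flips the sign in front of the two terms indexed by the edges with $\om$-labels $i$ and $i+1$, and for every other edge $e$ induces a transposition of two adjacent labels in the induced orientation $i_e\om$ on $T/e$. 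In each of the cases the total sign change is $-1=\ve(\tau)$. Preservation of the two zero relations is a direct inspection of the contractions issuing from the root.

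The main obstacle is \emph{(iii)}. Expanding on a canonically oriented tree,
\[
\dc^{2}(T,\om)=\sum_{e\neq f\in E(T)} (-1)^{\om(e)-1}(-1)^{i_e\om(f)-1}\,\bigl((T/e)/f,\,i_f i_e\om\bigr).
\]
The strategy is to pair each ordered couple $(e,f)$ with its reverse $(f,e)$ and show that the two contributions cancel. Going through the cases for the nature of $e$ and $f$ (both internal; the pair contains the root edge; the pair contains a non-root leaf edge; the two edges share an endpoint or not), one verifies that the iterated contract-and-split operations commute as operations on the underlying decorated planar forest, so that $(T/e)/f$ and $(T/f)/e$ are the same decorated forest, and that the induced orientations $i_f i_e\om$ and $i_e i_f\om$ differ by at most a transposition of two consecutive labels. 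A direct computation of the exponents, distinguishing the subcases $\om(e)<\om(f)$ and $\om(e)>\om(f)$, shows that the two combined signs are always opposite, giving the desired cancellation.

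Finally, since $\dc$ has already been shown to satisfy the graded Leibniz rule, $\dc^{2}$ is automatically a graded derivation, so its vanishing on the generators (individual trees) propagates through products to all of $\Fq$; combined with \emph{(i)} and \emph{(ii)}, this proves that $(\Fq,\cdot,\dc)$ is a commutative differential graded algebra.
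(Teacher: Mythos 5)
The paper does not actually prove this proposition: it is quoted with an explicit pointer to Gangl--Goncharov--Levin \cite{GanGonLev05}, where the differential $\dc$ and the verification that it squares to zero are carried out. Your argument is therefore a self-contained reconstruction rather than a variant of anything in the text, and it follows the standard (and correct) route: grade by the number of edges, get graded commutativity from the signature $(-1)^{n_1n_2}$ of the block shift, check $\ve(\sigma)$-equivariance on adjacent transpositions, and kill $\dc^2$ by the involution $(e,f)\mapsto(f,e)$ on ordered pairs of distinct edges. The sign computation is right: for $\om(e)<\om(f)$ one gets $(-1)^{\om(e)+\om(f)-1}$ versus $(-1)^{\om(e)+\om(f)}$, so the paired terms cancel \emph{provided} $\bigl((T/e)/f,\,i_fi_e\om\bigr)=\bigl((T/f)/e,\,i_ei_f\om\bigr)$ on the nose. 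Two remarks. First, a small internal inconsistency: you hedge that $i_fi_e\om$ and $i_ei_f\om$ ``differ by at most a transposition of two consecutive labels'' while also asserting the combined signs are ``always opposite''; if the orientations ever did differ by a transposition, opposite signs would produce a doubling, not a cancellation. In fact a direct check shows $i_fi_e\om=i_ei_f\om$ identically (each label $c\notin\{\om(e),\om(f)\}$ drops by the number of contracted labels below it, in either order), so you should commit to equality rather than hedge. Second, the genuinely delicate point --- that the contract-and-split operations commute as decorated planar forests in all configurations (root edge against an edge at the same internal vertex, two external edges at a common vertex, the splitting interacting with which component inherits the root, and the degenerate single-edge components) --- is asserted rather than verified; that case analysis is where the substance of the cited proof lies, so as written your text is a correct proof outline rather than a complete proof.
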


By an abuse of notation, for any Lyndon word $U$ the image of
  $\T{U}$ in $\Td$ with root vertex decorated by $t$ and canonical orientation
  is also denoted by $\T U$. The image of $\T U$ in $\Td$ with root vertex
  decorated by $1$ and canonical orientation is denoted by $\T U(1)$.

The main result of the combinatorial aspects is the following.
\begin{thm}  
Let $W$ be a Lyndon word.  Then the following equality holds in $\Td$:
\begin{equation} \tag{ED-T}\label{ED-T} 
\ds \dc(\T{W})=\sum_{U<V}\alpha_{U,V}^W\T{U}\cdot\T{V} 
+\sum_{U,V}\beta_{U,V}^W\T{U}\cdot\T{V}(1)
\end{equation}
 where the $\alpha_{U,V}^W$ are the ones from Equation
\eqref{def:TW}. In the above equation, coefficients $\alpha$'s and $\beta$'s are
in $\Z$.
\end{thm}
For a detailed proof, we refer to \cite{SouMPCC}. The theorem mainly follows by
induction from the combinatorics of the free Lie algebra $\Lie(X_0,X_1)$: 
\begin{itemize}
\item The terms in the first sum in the R.H.S of \eqref{ED-T} come from the contraction of
  the root edge which is nothing but the differential $d_{Lie}$ dual to the bracket
  of $\Lie(X_0,X_1)$.
\item Using the inductive definition of $\T W$ (cf. Equation \eqref{def:TW}), one shows
  by induction  that, as $d_{Lie}^2=0$, internal edges do not contribute. 
\item Terms in $\T{V}(1)$ arise from leaves decorated by $1$. The fact that
  terms arising from leaves decorated by $1$ can be regrouped as a product $\T U
  \cdot \T{V}(1)$ is due to a particular decomposition of some specific brackets
  in terms of the Lyndon basis. 
\end{itemize}

\begin{exm}\label{exm:dT}
As mentioned before, the trees are endowed with their canonical numbering. 
First on remarks that trees with only one edge are mapped to $0$ so 
\[
\dc(\T 0)=\dc(\TLx t)=0 \qquad \mx{and} \qquad 
\dc(\T 1)=\dc(\TLy t)=0.
\]

We recall
that a tree with root decorated by $0$ is $0$ in $\Td$. As applying an odd permutation
to the numbering changes the sign of the tree, the trivalency of the tree
$\T W$ shows that some trees coming from the computation of $\dc$ are $0$ in
$\Td$ because they contain a symmetric subtree; that is they contain a subtree of the form 
\[
 \arbreaa[1]{T}{T}
\]
where $T$ is a trivalent tree.
 
Using the fact that the
tree $\begin{tikzpicture}[baseline=(current bounding box.center),scale=1]
\tikzstyle{every child node}=[intvertex]
%[mathscript mode,minimum size=0pt, inner sep=0pt]
%[intvertex]%[fill,circle,minimum size=3pt, inner sep=0pt]
\node[roots](root) {}
[deftree]
child {node (1){} 
};
\fill (root.center) circle (1/1*\lbullet) ;
\node[mathsc, xshift=-1ex] at (root.west) {1};
\node[labf] at (1.south){0};
\end{tikzpicture}$ is $0$ in $\Fq$, one computes in weight $2$
\[
\dc(\T{01})=\dc(\TLxy[1] t)= \TLx[1.5] t \cdot \TLy[1.5] t= \T 0 \, \T 1.
\]
In weight $3$, one has 
\[
\dc(\T{001})=\dc(\TLxxy t)=\TLx[1.5] t \cdot \TLxy[1] t = \T 0 \, \T{01} 
\] 
and  
\begin{align*}
\dc(\T{011})&= \dc\left( 
\TLxyy t \right)= \TLxy[1] t \cdot \TLy[1.5] t + \TLy[1.5] t \TLxy[1] 1 \\
&=\T{01} \, \T 1 + \T 1 \, \T{01}(1). 
\end{align*}

In weight $4$, one can easily check that 
\[
\dc(\T{0001})=\T 0  \, \T{001} 
\qquad \mx{and} \qquad 
\dc(\T{0111})=\T{011}\, \T 1 + \T 1 \T{011}(1).
\]
The example of $\T{0011}$ is more interesting. % \hfill a

\begin{multline} \label{exm:dT0011T}
\dc \left(\TLxxyy[0.8] t 
\right)
= 
\TLx[2] t \cdot \TLxyy[1] t +  \TLxxy[1] t \cdot \TLy[2] t \\
 +\TLy[2] t \cdot \TLxxy[1] 1 +  \TLxy[1.5] t \cdot \TLxy[1.5] 1
\end{multline}
That is:
\begin{equation}\label{exm:dT0011}
\dc(\T{0011})= \T 0 \, \T{011} + \T{001} \, \T 1 + 
\T 1 \, \T{001}(1) + \T{01} \T{01}(1)
\end{equation}
In the above equations, the term in $\T{01} \, \T{01}(1)$ is coming form the
last edge of the tree  
\[
\TLxxyya t
\]
appearing in $\T{0011}$. 

Computing $\dc^2(\T{0011})$ (which is $0$), the
differential $\dc(\T{01} \, \T{01}(1))$ cancels with the term in $\T 0 \, \T 1
\T{01}(1)$ arising from $\dc(\T 0 \, \T{011})$. It can be thought of as the
propagation of the weight $3$ correction term $\T 1 \T{01}(1)$ 
appearing in $\dc(\T{011})$.

We give below an example in weight $5$,  $\dc(\T{01011})$ :
\begin{multline*}
\dc\left(\TLraxyxyy[0.8]{t}{ea}{eb}{a}\right)= \\
 \TLxy[1.7]{t} \cdot \TLxyy[1.5]{t}% \\
%
 %\\
%\shoveleft
%\TLxy{t} \cdot \TLxyy{t} 
+ \left(\TLxxyy[1] {t} \right)\cdot \TLy[2.1]{t} \\
+ \TLy[2.1]{t} \cdot \left( \TLxxyy[1]{1} \right)%\\
+\tikz[remember picture]{\coordinate(deux);}
2\TLxyy[1.5]{t} \cdot \TLxy[1.7]{1}
\begin{tikzpicture}[remember picture, overlay]
\node[mathsc] at (ea.east) {e};
\node[mathsc] at (eb.east) {f};
%\draw [->, thick, color=red!90] (al1) to[bend right] (all2);
\end{tikzpicture} 
\end{multline*}
where the last term arises from the part of the differential associated to
edges $e$ and $f$. The above equation can be written as
\begin{equation}\label{exm:dT01011}
\T{01011}=\T{01}\cdot \T{011}+\T{0011}\cdot \T{1}
+\T{1}\cdot \T{0011}(1)+2\T{011}\cdot \T{01}(1)
\end{equation}
\end{exm}

\section{Algebraic cycles}\label{sec:cycles}
This section begins with the construction of the cycle complex (or cycle
algebra) as presented 
in \cite{BlochACHKT,BlochLMM,BKMTM, LevBHCG}. Then, we give some properties of
equidimensional cycles groups over $\A^1$ and build some algebraic cycles
corresponding to multiple polylogarithms in one variable. 

Here the base field is $\Q$ and the various structures
have $\Q$ coefficients. 
\subsection{Construction of the cycle algebra}
Let $\square^n$ be the algebraic $n$-cube
\[
\square^n=(\p^1\sm \{1\})^n.
\] 
Insertion morphisms $s_{i}^{\ve} :\square^{n-1} \lra \square^{n} $ are given by
the identification
\[
\square^{n-1}\simeq \square^{i-1}\times \{\ve\}\times \square^{n-i}
\]
for $\ve=0,\infty$.  A \emph{face} $F$ of codimension $p$ of $\square^n$
is given by the equation $x_{i_k}=\ve_k$ 
for $k$ in $\{1, \ldots, p\}$ and  $\ve_k$ in $\{0,\infty\}$ where $x_1, \ldots,
x_n$ are the usual affine coordinates on $\p^1$. In particular, codimension $1$
faces are given by the images of insertion morphisms.

Now, let $X$ be a smooth  irreducible quasi-projective variety over ${\Q}$.
\begin{defn} Let $p$ and $n$ be non-negative integers. Let $\Zc^p(X,n)$ be the
  free group generated by closed irreducible
  sub-varieties of $X \times \square^n$ of codimension $p$ which intersect all
  faces $X \times F$ properly (where $F$ is a face of $\square^n$). That is:
\[
\Z\left< Z \subset X \times \square^n \text{ such that} \left\{
\begin{array}{l}
Z \text{ is closed and irreducible} \\
\on{codim}_{X\times F}(Z\cap (X \times F))=p \\
 \text{or } Z \cap (X
\times F)= \emptyset
\end{array}
\right.
\right>
\]
\end{defn}
 A sub-variety $Z$ of $X \times \square^n$ as above is \emph{admissible}.
The insertion morphisms $s_i^{\ve}$ induce a well defined pull-back 
$s_i^{\ve \, *} : \Zc^p(X,n) \ra \Zc^p(X,n-1)$ and a differential: 
\[
\dN =
\sum_{i=1}^{n}(-1)^{i-1} (s_i^{0\,*}-s_i^{\infty\,*}) : 
\Zc^p(X,n) \lra \Zc^p(X,n-1).
\]
The permutation group $\Sn$ acts on $\square^n$ by permutation of the factors.
This action extends to an action of the semi-direct
product $G_n=(\Z / 2\Z)^n \rtimes \Sn$ where each $\Z/2\Z$ acts on $\square^1$ by
sending the usual affine coordinates $x$ to $1/x$.  The sign representation of
$\Sn$ extends to a sign representation $G_n \lra
\{\pm 1\}$. Let $\Alt_n \in \Q[G_n]$ be the corresponding projector;  when the
context is clear enough, we may drop
the subscript $n$.
\begin{defn} Let $p$ and $k$ be integers as above. One defines 
\[
\mc N_X^k(p)=\Alt_{2p-k}(\Zc^p(X,2p-k)\otimes \Q).
\]
We will refer to $k$ as the \emph{cohomological degree} and to $p$ as the \emph{weight}.
\end{defn}
For our purpose, we will not only need admissible cycles but cycles in $X
\times \square^n$ whose fibers over $X$ are also admissible. 

\begin{defn}[Equidimensionality]\label{def:equicycle}
Let $X$ be an irreducible smooth variety.
\begin{itemize}
\item Let $\Ze[p](X,n)$ denote the free abelian group
  generated by irreducible closed subvarieties $Z \subset X\times \square^n$ such
  that for any face 
  $F$ of $\square^n$, the intersection $Z\cap (X\times F) $ is empty or the
  restriction of $p_1 : X\times \square^n \lra X$ to  
\[
 Z\cap (X\times F) \lra X
\]
is equidimensional of relative dimension $\dim(F)-p$.
\item We say that elements of $\Ze[p](X,n)$ are \emph{equidimensional over $X$ with
    respect to any  face} or simply \emph{equidimensional}.
\item Following the definition of $\cNg{X}{k}(p)$, let $\Nge{X}{k}(p)$ denote 
\[
\Nge{X}{k}(p)=\Alt_{2p-k}\left(\Ze[p](X,2p-k )\otimes \Q\right).
\]
\item If $Z$ is an irreducible closed subvariety of $X \times \square^n$
  satisfying the above condition, $Z|_{t=x}$ will denote the fiber over the
  point $x \in X$ of $p_1$ restricted to $Z$ that is $Z\cap (\{x\}\times \square^n)$.  

Let $C=\Alt(\sum q_i Z_i)$ be an element in $\Nge{X}{\bullet}$ with the
  $Z_i$ as above and $q_i$'s in $\Q$. For a point $x \in X$, we will denote by
  $C|_{t=x}$ the   element of $\cNg{X}{\bullet}$ 
\[
C|_{t=x}=\Alt(\sum q_i Z_i|_{t=x})
\]
which is well defined in both $\cNg{X}{\bullet}$ and $\cNg{\{x\}}{\bullet}$ by
definition of the $Z_i$. 
\end{itemize}
\end{defn}
\begin{exm}\label{exmL0:A1} Consider the graph of the identity $\A^1 \st{t 
\mapsto t}{\lra} \A^1$
  restricted to $\A^1\times \A^1\sm\{ 1\}$. Let $\Gamma_0$ be its embedding in
  $\A^1 \times \square^1$. Then $\Gamma_0$ is of codimension $1$ in $\A^1
  \times \square^1$ and is 
admissible as the intersection with the face 
$x_1=\infty$ is empty and the intersection with the face $x_1=0$ is $\{0\}\times
\{0\}$ which is of codimension $1$ in $\A^1 \times \{0\}$.

However, $\Gamma_0$ is not equidimensional as 
\[
\Gamma_0 \cap\left( \A^1\times \{0\}\right)=\{0\}\times\{0\}
\]
is neither equidimensional over $\A^1$ nor empty as the condition would require.

Applying the projector $\Alt$ gives an element $\ol{\Lc_0}$ in
$\cNg{\A^1}{1}(1)$. Using the definition of $\Gamma_0$ as a graph, one obtains a
parametric representation (where the projector $\Alt$ is omitted):
\[
\ol{\Lc_0}=[t;t] \qquad \subset \A^1\times \square^1.
\]
In the above notation the semicolon separates the base space coordinates from
the cubical coordinates.
\end{exm}

The morphisms $s_i^{\ve\, *}$ induce morphisms $\dN_i^{\ve} : \cNg{X}{k}(p)
\lra \cNg{X}{k+1}(p)$ and the above differential
$\dN=\sum_i(-1)^{i-1}(\dN_i^0-\dN^{\infty}_i)$ gives a complex
\[
\mc N_X^{\bullet}(p) : \qquad \cdots \lra \mc N_X^{k}(p) \st{\dN}{\lra} \mc
N_X^{k+1}(p) \lra \cdots 
\]
\begin{defn}
One defines the cycle complex as 
\[
\mc N_X^{\bullet}=\bigoplus_{p \geqs 0}\mc N_X^{\bullet}(p)
=\Q \oplus \bigoplus_{p \geqs 1}\mc N_X^{\bullet}(p)
\]
and as the differential restricts to equidimensional cycles, one also defines
\[
 \Nge{X}{\bullet}=\bigoplus_{p\geqs 0}\Nge{X}{\bullet}(p).
\]
\end{defn}

The author refers sometimes to $\cNg{X}{\bullet}$ as the cycle algebra because of
another natural structure coming with this cubical cycle complex: the product structure.

Levine has shown in \cite{LevBHCG}[\S 5] or \cite{LEVTMFG}[Example 4.3.2]
 the following proposition.
\begin{prop} Concatenation of the cube factors and pull-back by the diagonal
\[
X\times \square^n \times X \times \square^m \st{\sim}{\ra} 
X\times X \times \square^n \times  \square^m \st{\sim}{\ra} 
X\times X \times \square^{n+m}\st{\Delta_X}{\longleftarrow}
 X \times \square^{n+m}
\]
induce, after applying the $\Alt$ projector, a well-defined product:
\[
\mc N_X^k(p)\otimes \mc N_X^l(q) \lra \mc N_X^{k+l}(p+q)  
\]
 denoted by $\cdot$

The complex $\Nge{X}{\bullet}$ is stable under this product law.
\end{prop}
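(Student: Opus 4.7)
The plan is to verify (i) well-definedness of the product on cycle groups after applying $\Alt$, (ii) compatibility with the differential $\dN$, and (iii) preservation of equidimensionality.

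For (i), given admissible irreducible $Z_1\subset X\times\square^n$ and $Z_2\subset X\times\square^m$ of codimensions $p$ and $q$, the external product $Z_1\times Z_2\subset X\times X\times\square^{n+m}$ is irreducible of codimension $p+q$; it is admissible with respect to faces of $\square^{n+m}$ because any such face splits as $F_1\times F_2$ and
\[
(Z_1\times Z_2)\cap(X\times X\times F_1\times F_2)=(Z_1\cap X\times F_1)\times(Z_2\cap X\times F_2),
\]
which has the correct codimension. The product $Z_1\cdot Z_2$ is obtained by pullback along $\Delta_X\times\id$, but the diagonal need not meet $Z_1\times Z_2$ properly in the $X\times X$ direction; this is the main obstacle. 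Following Bloch's original construction and Levine's extension, one invokes a moving lemma: the action of $(\Z/2\Z)^{n+m}\rtimes\mf S_{n+m}$ on $\square^{n+m}$ is rich enough that, after $\Alt$-symmetrization, a representative of the class can be moved into general position with respect to the diagonal, and any two such choices differ by a boundary. This yields a well-defined pairing $\mc N_X^k(p)\otimes \mc N_X^l(q)\lra\mc N_X^{k+l}(p+q)$.

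For (ii), the cubical differential $\dN=\sum_{i=1}^{n+m}(-1)^{i-1}(\dN_i^0-\dN_i^\infty)$ splits according to whether $i\leqs n$ or $i>n$: for $i\leqs n$ the $i$-th face map only affects the first $n$ cubical factors, hence only $Z_1$, and the symmetric statement holds for $i>n$. Bilinearity of the external product together with the sign reindexing for $i>n$ yields the graded Leibniz rule
\[
\dN(Z_1\cdot Z_2)=\dN(Z_1)\cdot Z_2+(-1)^{k}Z_1\cdot\dN(Z_2)\qquad (Z_1\in\mc N_X^k(p)),
\]
so that $\cdot$ descends to a morphism of complexes.

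For (iii), the fiber of $Z_1\cdot Z_2$ over $x\in X$ equals $Z_1|_x\times Z_2|_x$ inside $\{x\}\times\square^{n+m}$, since the diagonal pullback identifies the two copies of $X$. If $Z_1\in\Ze[p](X,n)$ and $Z_2\in\Ze[q](X,m)$, then for any face $F=F_1\times F_2$ of $\square^{n+m}$ the intersection $(Z_1\cdot Z_2)\cap(X\times F)$ is either empty or equidimensional over $X$ of relative dimension $(\dim F_1-p)+(\dim F_2-q)=\dim F-(p+q)$. Hence $Z_1\cdot Z_2\in\Ze[p+q](X,n+m)$, and $\Nge{X}{\bullet}$ is stable under $\cdot$.
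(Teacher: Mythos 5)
You correctly isolate the central difficulty: the external product $Z_1\times Z_2\subset X\times X\times\square^{n+m}$ need not meet $\Delta_X\times\square^{n+m}$ properly, so the naive diagonal pullback can have the wrong codimension (e.g.\ $Z_1=Z_2=\{0\}\times C$ over $X=\A^1$). But the mechanism you propose to resolve it does not work. The group $G_{n+m}=(\Z/2\Z)^{n+m}\rtimes\Sn[n+m]$ acts only on the cubical coordinates and leaves the $X\times X$ factor untouched; every term of the $\Alt$-symmetrization of $Z_1\times Z_2$ therefore lies over exactly the same locus of $X\times X$, and its position relative to the diagonal is unchanged. The projector $\Alt$ enforces graded commutativity and kills degenerate cycles; it is not a moving lemma transverse to $\Delta_X$. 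Moreover, even granting some moving lemma, the conclusion that ``any two such choices differ by a boundary'' would only produce a product on cohomology or up to homotopy, not the chain-level map $\mc N_X^k(p)\otimes\mc N_X^l(q)\to\mc N_X^{k+l}(p+q)$ asserted here and used throughout the paper, where products such as $\Lc_0\cdot\Lc_1$ are computed by literal intersection with the diagonal. For comparison: the paper itself gives no proof but quotes Levine, and the only hint it records is the remark that smoothness of $X$ makes $\Delta_X$ a local complete intersection embedding, so that a refined Gysin pullback is available --- a different device from the one you invoke. So step (i) has a genuine gap.

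Your parts (ii) and (iii) are sound (including the sign $(-1)^k=(-1)^{2p-k}=(-1)^n$ in the Leibniz rule), and (iii) in fact contains the germ of the cleanest argument for the half of the statement the paper actually relies on. If $Z_1\in\Ze[p](X,n)$ and $Z_2\in\Ze[q](X,m)$ are equidimensional over $X$ with respect to all faces, then for every face $F=F_1\times F_2$ of $\square^{n+m}$ the fiber of $(Z_1\times_X Z_2)\cap(X\times F)$ over $x\in X$ is $\left(Z_1|_x\cap F_1\right)\times\left(Z_2|_x\cap F_2\right)$, of dimension $(\dim F_1-p)+(\dim F_2-q)=\dim F-(p+q)$; taking $F$ to be the whole cube shows that $Z_1\times_X Z_2$ already has the correct codimension, i.e.\ the diagonal pullback is automatically proper on $\Nge{X}{\bullet}$ and no moving is needed there. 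To prove the full statement on all of $\mc N_X^{\bullet}$ you must either reproduce Levine's actual construction or impose a hypothesis guaranteeing properness over the diagonal; as written, the appeal to the $G_{n+m}$-action cannot supply it.
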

\begin{rem} The smoothness hypothesis on $X$  allows us to consider the
  pull-back by the diagonal $\Delta_X : X \lra X \times X$ which is, in this case,
  of local complete intersection. 
\end{rem}
One has the following theorem (stated in \cite{BKMTM, BlochLMM} for
$X=\Sp({\Q})$).
\begin{thm}[{\cite{LevBHCG}}]
The cycle complex $\mc N_X^{\bullet}$ is an Adams graded, commutative differential graded 
algebra (Adams graded, c.d.g.a.). In weight $p$, its cohomology groups are the
higher Chow groups of $X$: 
\[
\HH^k(\mc N_X(p))=\CH^p(X,2p-k)_{\Q},
\]  
where $\CH^p(X,2p-k)_{\Q}$ stands for $\CH^p(X,2p-k)\otimes {\Q}$.
\end{thm}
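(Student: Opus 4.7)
The plan is to verify the structural claims (differential, product, Leibniz rule, graded commutativity, Adams grading) directly from the cubical definitions, and then invoke the known comparison between cubical Bloch groups and higher Chow groups to identify the cohomology.

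\textbf{Step 1 (differential).} I would first check that $\dN$ is well-defined on $\mc N_X^{\bullet}(p)$ and squares to zero. Well-definedness follows since each face map $s_i^{\ve,*}$ sends admissible cycles to admissible cycles (proper intersection with any face of $\square^{n-1}$ is inherited from proper intersection in $\square^n$). The identity $\dN^2=0$ is the standard cubical cocycle identity: the alternating sum over pairs of face restrictions cancels in pairs $(s_i^{\ve} s_j^{\eta,*}) = (s_{j-1}^{\eta,*} s_i^{\ve,*})$ for $i<j$, with the sign from $(-1)^{i-1}(-1)^{j-1}$ exactly matching the reverse pairing.

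\textbf{Step 2 (product and Leibniz).} Next I would verify that the concatenation-and-pull-back product sends admissible cycles to admissible cycles (after $\Alt$), using that concatenation of faces of $\square^n$ and $\square^m$ gives a face of $\square^{n+m}$ and the diagonal of a smooth $X$ is a regular embedding, so the intersection-theoretic pull-back preserves proper intersection generically. For the Leibniz rule, I would compute $\dN$ on a concatenated cycle: the faces of $\square^{n+m}$ split into those coming from $\square^n$ (indices $1,\dots,n$) and those coming from $\square^m$ (indices $n+1,\dots,n+m$); the signs $(-1)^{i-1}$ in the second group reassemble, after a shift by $n$, into $(-1)^{n}$ times the analogous sum for $\square^m$, giving the graded Leibniz identity.

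\textbf{Step 3 (graded commutativity and Adams grading).} Here I expect the main technical work. Without $\Alt$, swapping the two factors of $\square^{n+m}$ requires the block permutation $\tau_{n,m}\in\Sn[n+m]$ exchanging $\{1,\dots,n\}$ with $\{n+1,\dots,n+m\}$, whose sign is $(-1)^{nm}$. Since $\Alt_{n+m}$ absorbs $\Sn[n+m]$ via the sign character of $G_{n+m}$, applying $\Alt$ after concatenation produces the same element (up to $(-1)^{nm}$) whether one concatenates $a,b$ or $b,a$. Translating $n=2p-k$ and $m=2q-l$, the sign $(-1)^{nm}$ reduces modulo $2$ to $(-1)^{kl}$, which is the desired Koszul sign. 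Adams grading is then automatic: the product lands in weight $p+q$ because codimensions add under the diagonal pull-back from $X\times X \times \square^{n+m}$ to $X\times \square^{n+m}$.

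\textbf{Step 4 (cohomology = higher Chow groups).} Finally, to identify $\HH^k(\mc N_X(p))$ with $\CH^p(X,2p-k)_{\Q}$, I would argue in two steps. First, the cubical Bloch complex (before applying $\Alt$) computes higher Chow groups after inverting the subcomplex of degenerate cycles, by the standard cubical/simplicial comparison; modding out degenerate cycles is, rationally, the same as applying $\Alt_n$ with the sign representation restricted to $\Sn[n]$. Second, the extra $(\Z/2\Z)^n$-part in $G_n$ (coming from $x\mapsto 1/x$) acts on admissible cycles and kills nothing new rationally, so $\HH^k(\mc N_X^{\bullet}(p))=\CH^p(X,2p-k)_{\Q}$. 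I would cite \cite{LevBHCG} for the detailed argument on the compatibility of $\Alt$ with cohomology. Equidimensionality plays no role here but will be needed later to make pull-backs along $t=x$ well-defined on the differential graded subalgebra $\Nge{X}{\bullet}$.
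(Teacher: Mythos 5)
The paper does not actually prove this statement --- it is quoted from \cite{LevBHCG} --- so your sketch can only be measured against the standard argument, not against a proof in the text. Steps 1 and 3 are fine: the cubical cocycle identity, the Leibniz bookkeeping over the two blocks of faces, and the sign computation $(-1)^{nm}\equiv(-1)^{kl}$ for $n=2p-k$, $m=2q-l$ are all correct. The genuine gap is in Step 2: the pull-back by $\Delta_X$ of the external product of two admissible cycles is \emph{not} always defined as a cycle, and your phrase ``preserves proper intersection generically'' concedes exactly this. For instance, if $Z\subset X$ is a divisor and $W_1=W_2=Z\times\square^0$, both are admissible of weight $1$, yet $\Delta_X^*(W_1\times W_2)=Z$ has codimension $1$ rather than $2$; applying $\Alt$ does not repair this, since $\Alt$ only averages over the cubical coordinates and does nothing about failure of properness over the base $X$. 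Producing an everywhere-defined product (so that $\mc N_X^{\bullet}$ is an honest cdga rather than a partially defined one) is precisely the nontrivial content of Levine's construction, via moving lemmas and passage to quasi-isomorphic subcomplexes of cycles in good position, and cannot be dispatched in one sentence.

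A secondary inaccuracy sits in Step 4: you have the roles of $\Sn$ and of the $(\Z/2\Z)^n$-factor of $G_n$ reversed. A degenerate cycle, i.e.\ one pulled back along a projection $\square^n\to\square^{n-1}$, is invariant under the involution $x_i\mapsto 1/x_i$ of the forgotten coordinate, so it is the sign-alternation over $(\Z/2\Z)^n$ that kills the degeneracies rationally; alternation over $\Sn$ alone does not. With that corrected, the chain ``$\Alt$-complex $\simeq$ cubical complex modulo degeneracies $\simeq$ simplicial Bloch complex'' is the right route to $\CH^p(X,2p-k)_{\Q}$, and deferring its verification to \cite{LevBHCG} is appropriate.
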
 
Moreover $\Nge{X}{\bullet}$ turns into a sub-Adams graded, c.d.g.a. Note that, in the graded algebra context, \emph{commutative}
 always means \emph{graded commutative}. 

One has natural flat pull-backs and proper push-forwards on $\cNg{X}{\bullet}$
(and on $\Nge{X}{\bullet}$). Comparison with higher Chow groups also gives on
the cohomology groups both $\A^1$-homotopy invariance and the long exact
sequence associated to an open and its closed complement. Writing $\ps$ as $\A^1
\sm \{ 0,1 \}$, one obtains the following description of
$\HH^*(\cNg{\ps}{\bullet}(p))$ :
\begin{multline*}
\HH^k(\mc N_{\ps}^{\bullet}(p))\simeq \HH^k(\mc N_{\Q}^{\bullet}(p))\oplus \\
\HH^{k-1}(\mc N_{\Q}^{\bullet}(p-1))\otimes \Q \Lc_0 
\oplus \HH^{k-1}(\mc N_{\Q}^{\bullet}(p-1))\otimes \Q \Lc_1,
\end{multline*}
where $\Lc_0$ and $\Lc_1$ are in cohomological degree $1$ and weight $1$ (that is of
codimension $1$). Their  explicit description will be given later on. 

Comparing the
situation over $\ps$ and over $\A^1$ comes as an important idea in our project
as the desired cycles over $\ps$ need to admit a
natural specialization at $1$.  In particular, we will need to work with
equidimensional cycle and some of their properties are given in the next
subsection.

\subsection{Equidimensional cycles}
The following result given in \cite{SouMPCC}
essentially follows from the definition and makes it easy to compare both situations.   
\begin{prop}\label{equiopen}
Let $X_0$ be an open dense subset of $X$ an irreducible smooth variety and let $j : X_0
\lra X$ be the inclusion. Then the restriction of cycles from $X$ to $X_0$
induces a morphism of  c.d.g.a. preserving the weight (that is the Adams grading)
\[
j^* : \Nge{X}{\bullet} \lra \Nge{X_0}{\bullet}.
\]

Moreover, let $C$ be in $\cNg{X_0}{\bullet}$ and  write $C$ in terms of
the generators of the group $\oplus\Zc^{\star}(X_0,\bullet)$ as
\[
C=\sum_{i\in I} q_i Z_i, \qquad q_i \in \Q
\] 
where $I$ is a finite set. 
Assume that,  for any $i$, the Zariski
closure $\ol{Z_i}$ of $Z_i$ in $X \times \square^{n_i}$ intersected with any
face $X\times F$ of
$X \times \square^{n_i}$ is equidimensional over $X$ of relative dimension
$\dim(F)-p_i$. Define $C'$ as 
\[
C'=\sum_{i\in I} q_i \ol{Z_i},
\]
then
\[
C' \in \Nge{X}{\bullet} \qquad \mx{and}\qquad  C=j^*(C')\in \Nge{X_0}{\bullet}.
\]
\end{prop}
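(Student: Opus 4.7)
The plan is to establish the two parts of the proposition separately: first that open restriction defines a cdga morphism on the equidimensional complexes, and then the (easier) Zariski-closure statement.

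\textbf{First part.} For an irreducible closed subvariety $Z \subset X \times \square^n$ representing a generator of $\Ze[p](X,n)$, I would set $j^*(Z) := Z \cap (X_0 \times \square^n)$, which is open in $Z$ and hence either empty or equidimensional of the same pure dimension. The key observation is that for any face $F$ of $\square^n$ and any point $x \in X_0$, the fibre of $j^*(Z) \cap (X_0 \times F) \lra X_0$ over $x$ coincides with the fibre of $Z \cap (X \times F) \lra X$ over the same point $x$. Consequently, if the latter map is empty or equidimensional of relative dimension $\dim(F)-p$ at every point, so is the former over points of $X_0$. This shows $j^*$ sends $\Ze[p](X,n)$ into $\Ze[p](X_0,n)$, and after applying $\Alt$ one gets a map on $\Nge{X}{\bullet}$. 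Compatibility with $\dN$ is immediate because the insertion morphisms $s_i^{\ve}$ act only on the cube factor, so $s_i^{\ve,*}$ commutes with the open restriction in the $X$-factor. Similarly, for the product, I would observe that the concatenation of cube factors $\square^n\times\square^m \simeq \square^{n+m}$ is purely cubical and that the diagonal pullback $\Delta_{X_0}^* = j^* \circ \Delta_X^*$ (since $\Delta_{X_0}$ is the base change of $\Delta_X$ along $j$), making $j^*$ a morphism of algebras.

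\textbf{Second part.} Here the hypothesis is tailored precisely to what is needed: each $\ol{Z_i}$ is irreducible (being the closure of an irreducible variety), and by assumption every face intersection $\ol{Z_i}\cap(X\times F)$ is equidimensional over $X$ of relative dimension $\dim(F)-p_i$. In particular $\ol{Z_i}$ itself has codimension $p_i$ in $X\times\square^{n_i}$ and intersects every face properly, so $\ol{Z_i}$ represents a generator of $\Ze[p_i](X,n_i)$. Therefore $C' = \sum_i q_i \ol{Z_i}$ (viewed after the $\Alt$-projector) lies in $\Nge{X}{\bullet}$. It remains to verify $j^*(C') = C$: since $Z_i$ is an irreducible closed subvariety of $X_0 \times \square^{n_i}$, its Zariski closure in $X \times \square^{n_i}$ satisfies $\ol{Z_i} \cap (X_0 \times \square^{n_i}) = Z_i$, so $j^*(\ol{Z_i}) = Z_i$ term by term, giving $j^*(C') = C$.

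\textbf{Expected main obstacle.} The least automatic step is the equidimensionality verification of the first part: one must check that the flat base change along the open immersion $j \times \id$ preserves the relative dimension of each face intersection over the base. This is where one uses that the fibres over $X_0$-points are unchanged, so no ``generic behaviour'' is lost. The second part is essentially formal, but it is worth emphasising that the equidimensionality assumption on the closures $\ol{Z_i}$ does not follow from equidimensionality of $Z_i$ over $X_0$ alone: the phrasing of the hypothesis is precisely what rules out pathological fibre jumps over the boundary $X \setminus X_0$.
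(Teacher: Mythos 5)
Your proof is correct and follows exactly the route the paper intends: the paper gives no argument for this proposition, stating only that it ``essentially follows from the definition'' (deferring to the reference for details), and your definitional verification --- fibres of the face intersections over points of $X_0$ are unchanged under open restriction, compatibility with $\dN$ and the product is formal, and the closure hypothesis in the second part is precisely what puts each $\ol{Z_i}$ in $\Ze[p_i](X,n_i)$ with $\ol{Z_i}\cap(X_0\times\square^{n_i})=Z_i$ --- is the intended argument. The only point worth making explicit is that $C'$ is again $\Alt$-invariant because Zariski closure commutes with the $G_n$-action, but this is immediate.
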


Below, we
describe the main geometric fact that allows the construction of our cycles:
pulling back by the multiplication induces a homotopy between identity and the
zero section on the cycle algebra over $\A^1$.
%\begin{defn}\label{defmultequi}

Let $m : \A^1 \times \A^1 \lra \A^1$ be the multiplication map sending $(x,y)$
to $xy$ and let $\tau : \square^1=\p^1\sm\{1\}\lra \A^1$ be the isomorphism
sending the affine coordinate $u$ to $\frac{1}{1-u}$. The map $\tau$ sends
$\infty$ to $0$, $0$ to $1$ and extends as a map from $\p^1$ to $\p^1$ sending
$1$ to $\infty$.
%\end{defn}

The maps $m$ and $\tau$ are in particular flat and equidimensional of relative
dimension $1$ and $0$, respectively.

Consider the following commutative diagram for a positive integer $n$
\[
\begin{tikzpicture}
\matrix (m) [matrix of math nodes,
row sep=2.5em, column sep=9.5em, 
 text height=1.5ex, text depth=0.25ex]
{\A^1\times \square^1 \times \square^n & \A^1 \times\square^n \\
\A^1\times \square^1 & \A^1 \\
\A^1 & \\};
\path[->]%,font=\scriptsize]
(m-1-1) edge node[mathsc,auto]{(m\circ(\id_{\A^1}\times \tau))\times \id_{\square^n}} (m-1-2)
(m-1-1) edge node[mathsc,left]{p_{\A^1\times \square^1}} (m-2-1)
(m-1-2) edge node[mathsc,right]{p_{\A^1}} (m-2-2)
(m-2-1) edge node[mathsc,auto]{m\circ(\id_{\A^1}\times \tau)} (m-2-2)
(m-2-1) edge node[mathsc,left]{p_{\A^1}} (m-3-1);
\end{tikzpicture}
\]

\begin{prop}[multiplication and equidimensionality]\label{multequi}
In the following statement,  $p$, $k$ and $n$ will denote positive integers
subject to the relation $n=2p-k$
\begin{itemize}
\item the composition $ \wt{m}=(m \circ (\id_{\A^1}\times \tau ))\times
  \id_{\square^n}$ induces  a group morphism
\[
\Ze[p](\A^1,n)\st{\wt{m}^*}{\lra} \Ze[p](\A^1\times \square^1,n)
\] 
which extends to a morphism of complexes for any $p$
\[
\Nge{\A^1}{\bullet}(p)\st{\wt{m}^*}{\lra} \Nge{\A^1\times\square^1}{\bullet}(p).
\]
\item Moreover, one has a natural group morphism
\[
h_{\A^1,n}^p : \Ze[p](\A^1\times \square^1,n) \lra  \Ze[p](\A^1,n+1)
\] 
given by regrouping the $\square^1$ factors (as $\square^n=(\square^1)^n$). 
\item The composition $\mu^* = h_{\A^1,n}^p \circ \wt{m}^*$ gives a
  linear map
\[
\mu^* : \Nge{\A^1}{k}(p) \lra \Nge{\A^1}{k-1}(p)
\]
sending equidimensional cycles with  empty fiber at $0$ to equidimensional cycles
with empty fiber at $0$.
\item Let $\theta : \A^1 \lra \A^1$ be the involution sending the natural affine
  coordinate $t$ to $1-t$. Twisting the map $\wt{m}$ by $\theta$ gives a map $\wh{m}$ via
\[
\begin{tikzpicture}
\matrix (m) [matrix of math nodes,
row sep=2.5em, column sep=2.5em, 
 text height=1.5ex, text depth=0.25ex]
{\A^1\times \square^1 \times \square^n & \A^1 \times\square^n \\
\A^1\times \square^1 \times \square^n & \A^1 \times\square^n \\};
\path[->,font=\scriptsize]
(m-2-1) edge node[mathsc,auto]{\wt{m}} (m-2-2)
(m-1-1) edge node[mathsc,left]{\theta \times\id _{\square^{n+1}}} (m-2-1)
(m-2-2) edge node[mathsc,right]{\theta \times\id _{\square^{n}}} (m-1-2);
\path[->,dashed, font=\scriptsize]
(m-1-1) edge node[mathsc,auto]{\wh m} (m-1-2);
\end{tikzpicture}
\]
and induces a linear map
\[
\nu^* : \Nge{\A^1}{k}(p) \lra \Nge{\A^1}{k-1}(p)
\]
sending equidimensional cycles with  empty fiber at $1$ to equidimensional cycles
with empty fiber at $1$.
\end{itemize}
\end{prop}
\begin{proof}
It is enough to work with generators of $\Ze[p](\A^1,n)$. Let $Z$ be an
irreducible subvariety of $\A^1\times \square^n$ such that for any face $F$ of
$\square^n$, the first projection 
\[
p_{\A^1} : Z \cap (\A^1 \times F) \lra \A^1
\]
is equidimensional of relative dimension $\dim(F)-p$ or empty. Let $F$ be a face
of 
$\square^n$. We  want first to show that 
under the projection $\A^1 \times \square^1 \times \square^n \lra \A^1 \times
\square^1$, 
\[
\wt{m}^{-1}(Z)\cap (\A^1\times \square^1 \times F) \lra \A^1 \times
\square^1 \]
 is equidimensional of relative dimension $\dim(F)-p$ or empty. This follows
 from the  
 fact that $Z \cap (\A^1 \times F)$ is equidimensional over $\A^1$ and $m$ is
 flat and equidimensional of relative 
 dimension $1$ (hence are $m\times \tau$ and $\wt{m}$). The map $\wt{m}$ is the
 identity on the $\square^n$ factor, 
 thus for $Z\subset \A^1\times \square^n$ as above and a codimension $1$
 face $F$ of $\square^n$, $\wt{m}^{-1}(Z)$ satisfies
\[
\wt{m}^{-1}(Z)\cap (\A^1 \times \square^1\times F)=\wt{m}^{-1}(Z\cap(\A^1 \times 
F))
\]
which makes $\wt{m}^*$ into a morphism of complexes.

Moreover, assuming  that the fiber of $Z$ at $0$ is empty,  as $\wt{m}$
restricted to  
\[
\{0\} \times \square^1 \times \square^n
\]
factors through the inclusion
$\{0\}\times \square^n \lra \A^1 \times \square^n$, the intersection 
\[
\wt{m}^{-1}(Z)\cap \left(
\{0\}\times \square^1 \times \square^n
\right)
\]
is empty. Hence the fiber of $\wt{m}^{-1}(Z)$ over $\{0\}\times \square^1$
 by $p_{\A^1\times \square^1}$ is empty and the same holds for the fiber over
 $\{0\}$ by $p_{\A^1}\circ p_{\A^1\times \square^1} $.

Now, let $Z$ be an irreducible subvariety of $\A^1\times\square^1\times
\square^n$ such that for any face $F$ of $\square^n$
\[
Z\cap (\A^1\times \square^1 \times F) \lra \A^1 \times
\square^1 \]
is equidimensional of relative dimension $\dim(F)-p$. Let $F'$ be a face of 
\[
\square^{n+1}=\square^1\times \square^n.
\]
The face $F'$ is either of the form $\square^1\times F$ or of the form
$\{\ve\}\times F$ with $F$ a face of $\square^n$  and $\ve \in \{0,\infty\}$. If
$F'$ is of the first type, as 
\[
Z\cap (\A^1\times \square^1 \times F) \lra \A^1 \times \square^1
\] 
is equidimensional and, as  $\A^1
\times \square^1 \lra \A^1$  is equidimensional of relative dimension $1$, the
projection 
\[
Z\cap (\A^1\times \square^1 \times F) \lra \A^1
\]
is equidimensional of relative dimension
\[
\dim(F)-p+1=\dim(F')-p.
\]

If $F'$ is of the second type, by symmetry of the role of $0$ and $\infty$, we can
assume that $\ve=0$. Then the intersection 
\[
Z\cap (\A^1\times \{0\} \times F)
\] 
is nothing but the fiber of $Z\cap (\A^1\times \square^1 \times F)$
over $\A^1\times \{0\}$. Hence, it has pure dimension $\dim(F)-p+1$.

Moreover, denoting with a subscript the fiber, the composition 
%\marginote{this
%  may need to be checked or commented}
\[
Z\cap (\A^1\times \{0\} \times F)=
\left(Z\cap(\A^1\times \square^1 \times F)\right)_{\A^1\times\{0\}} \lra 
\A^1\times \{0\} \lra \A^1
\]
is equidimensional 
%\marginote{still have this flatness question : can one be
%  equidimensional without being flat ?} 
of relative dimension  
\[
\dim(F)-p=\dim(F')-p.
\] 
This shows that $h_{\A^1,n}^p$ gives a well defined morphism and that it 
preserves the fiber at a point $x$ in $\A^1$; in particular, if $Z$ has an empty
fiber at $0$, so does $h_{\A^1,n}^p (Z)$. 

Finally, the last part of the proposition is deduced from the fact that
$\theta$ exchanges the role of $0$ and $1$.
\end{proof}
\begin{rem}\label{emptyinfty} We have remarked that $\wt{m}$ sends cycles with
  empty fiber at $0$ 
  to cycles with empty fiber at any point in $\{0\}\times \square^1$. Similarly
  $\wt{m}$ sends cycles with empty fiber at $0$ to cycles that also
  have an empty fiber at any point in $\A^1\times \{\infty\}$.
\end{rem}
From the proof of Levine's Proposition 4.2 in \cite{LevBHCG}, we deduce that
$\mu^*$ gives a homotopy between $p_{0}^*\circ i_0^*$ and $ 
\id  $ where $i_0$ is the zero section $\{0\} \ra \A^1$ and $p_0$ the projection
onto the point $\{0\}$.
\begin{prop}\label{multhomo}Notations are the ones from Proposition
  \ref{multequi} above. Let $i_0$ (resp. $i_1$) be  the inclusion of $0$ (resp. $1$)
  in $\A^1$:
\[
i_0 : \{0\} \lra \A^1\, , \qquad  \quad i_{1} : \{1\} \lra \A^1.
\]  
Let $p_0$ and $p_1$ be the corresponding projections $p_{\ve} : \A^1 \lra
\{\ve\}$ for $\ve=0,1$.

Then $\mu^*$ provides a homotopy between 
\[
p_{0}^*\circ i_0^* \mx{ and } 
\id : \Nge{\A^1}{\bullet} \lra \Nge{\A^1}{\bullet}
\] 
and similarly $\nu^*$ provides a homotopy between
\[
p_{1}^*\circ i_1^* \mx{ and } 
\id : \Nge{\A^1}{\bullet} \lra \Nge{\A^1}{\bullet}.
\] 
In other words, one has 
\[
\da[\A^1]\circ \mu^* + \mu^*\circ \da[\A^1] = 
\id - p_{0}^*\circ i_0^* 
\quad \mx{and} \quad 
\da[\A^1]\circ \nu^* + \nu^*\circ \da[\A^1] = 
\id-  p_{1}^*\circ i_1^*.
\]
%More over, one should remark that $p_{\A^1}^*\circ i_0^*$ is given on the
%generators of $\Ze[*](\A^1, \bullet)$ by the identity.
\end{prop}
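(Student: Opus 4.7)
The plan is to verify the two homotopy identities by a direct boundary computation on generators, and then to deduce the second from the first via the involution $\theta$. Given an equidimensional cycle $Z$ representing an element of $\Nge{\A^1}{k}(p)$ with $n = 2p - k$, the cycle $\mu^*(Z) = h^p_{\A^1,n} \circ \widetilde{m}^*(Z)$ lives in $\A^1 \times \square^{n+1}$. The extra cubical factor, which I take to be the first after the regrouping by $h$, is the $\square^1$ introduced by pulling back along $\widetilde{m} = m \circ (\id_{\A^1} \times \tau)$. The key geometric observation is that the two face maps on this new factor specialize $\widetilde{m}$ to the identity and to the zero section of $\A^1$: since $\tau(0) = 1$ one has $\widetilde{m}(t, 0, \underline{z}) = (t, \underline{z})$, while $\tau(\infty) = 0$ forces $\widetilde{m}(t, \infty, \underline{z}) = (0, \underline{z})$.

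I would then split the differential as $\partial = \sum_{i=1}^{n+1} (-1)^{i-1}(\partial_i^0 - \partial_i^\infty)$ applied to $\mu^*(Z)$. By the specializations above, the $i=1$ contribution is exactly $Z - p_0^* i_0^* Z$. For $i \geq 2$, the map $\widetilde{m}$ is the identity on the coordinate being restricted, so $\partial_i^\ve$ commutes with $\widetilde{m}^*$ and $h$, giving $\partial_i^\ve \mu^*(Z) = \mu^*(\partial_{i-1}^\ve Z)$. Reindexing by $j = i-1$ introduces a sign, and the remaining sum reassembles as $-\mu^*(\partial Z)$; combining the two pieces yields the desired identity $\partial \mu^* + \mu^* \partial = \id - p_0^* i_0^*$. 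Throughout this computation, Proposition \ref{multequi} ensures that the cycles involved remain equidimensional with the right empty-fiber condition at $0$, and compatibility with the $\Alt$-projector follows from the equivariance of $\widetilde{m}$ and $h$ with respect to the symmetric group action on the last $n$ cubical factors.

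The identity for $\nu^*$ I would deduce from the one for $\mu^*$ by conjugating with $\theta$. Since $\nu^*$ was defined precisely by twisting $\widetilde{m}$ with $\theta$, and since $\theta$ interchanges the closed points $0$ and $1$ so that $\theta^* \circ p_0^* \circ i_0^* = p_1^* \circ i_1^* \circ \theta^*$, pre- and post-composing the $\mu^*$-homotopy formula with $\theta^*$ produces the corresponding formula for $\nu^*$. The empty-fiber condition at $1$ required by Proposition \ref{multequi} is the one matched by this transport.

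The principal obstacle I expect is the careful sign bookkeeping in the cubical complex, combined with the need to track that the $\Alt$-projector and the fiberwise conditions of Definition \ref{def:equicycle} are preserved at each intermediate step. None of this is conceptually delicate, but it has to be performed with care; fortunately the essential calculation has already been carried out by Levine in \cite[Proposition 4.2]{LevBHCG}, and the present statement amounts to transporting that argument into the equidimensional setting provided by Proposition \ref{multequi}.
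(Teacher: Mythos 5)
Your proposal is correct and follows essentially the same route as the paper: split the cubical differential of $\mu^*(Z)$ into the two faces of the newly inserted $\square^1$ factor (which give $\id$ and $p_0^*\circ i_0^*$ via $\tau(0)=1$ and $\tau(\infty)=0$) plus the reindexed remaining faces (which reassemble, with the sign flip, into $-\mu^*\circ\da[\A^1]$ using the commutation of $\wt{m}^*$ with the differential), and then transport the result to $\nu^*$ by the involution $\theta$. This matches the paper's computation of $\da[\A^1]\circ h_{\A^1,n}^p$ step for step.
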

The proposition follows from computing the different compositions
  involved and the relation between the differential on $\Nge{\A^1\times
    \square^1}{\bullet}$ and the one on $\Nge{\A^1}{\bullet}$ via the map
  $h_{\A^1,n}^p$.

\begin{proof} 
We denote by $i_{0,\square}$ and $i_{\infty, \square}$ the zero section and
the infinity section $\A^1 \lra \A^1 \times \square^1$.
The action of $\theta$ only exchanges the role of $0$ and $1$ in $\A^1$, hence it
is enough to prove the statement for $\mu^*$.
As previously, in order to obtain the proposition for $\Nge{\A^1}{k}(p)$, it is
enough to work on the generators of $\Ze[p](\A^1,n)$ with $n=2p-k$.

By the previous proposition \ref{multequi}, the morphism $\wt{m}^*$ commutes with the
 differential on $\Ze[p](\A^1,\bullet)$ and on $\Ze[p](\A^1\times \square^1,
 \bullet)$.  
 As the morphism $\mu^*$ is defined by  $\mu^*=h_{\A^1,n}^p\circ \wt{m}^*$, the
 proof relies on computing $\da[\A^1]  \circ h_{\A^1, n}^p$. Let $Z$ be
 a generator of $\Ze[p](\A^1\times \square^1,n)$. In particular,
\[
Z \subset \A^1\times \square^1 \times \square^n
\]  
and $h_{\A^ 1,n}^p(Z)$ is also given by $Z$ but viewed in 
\[
\A^1\times \square^{n+1}.
\] 
The differentials denoted by $\da[\A^1]^{n+1}$  on
$\Ze[p](\A^1,n+1)$ and  $\da[\A^1 \times
\square^1]^n$ on $\Ze[p](\A^1\times \square^1, n)$ 
are both given by intersections with the codimension $1$ faces but the first
$\square^1$ factor in $\square^{n+1}$ gives  two more faces and introduces a
change of sign. Namely, using an extra subscript to indicate in which cycle
groups the intersections take place, one has:
\begin{align*}
\da[\A^1]^{n+1}(h_{\A^1,n}^p(Z))=&
\sum_{i=1}^{n+1} (-1)^{i-1}\left(\dN_{i,\A^1}^0(Z)-\dN_{i,\A^1}^{\infty}(Z)\right) \\ 
=&\dN_{1,\A^1}^0(Z)-\dN_{1, \A^1}^{\infty}(Z)
-\sum_{i=2}^{n+1}(-1)^{i-2} \left(\dN_{i,\A^1}^0(Z)-\dN_{i,\A^1}^{\infty}(Z)\right) \\
=&i_{0,\square}^*(Z)-i_{\infty, \square }^*(Z) 
-\sum_{i=1}^{n}(-1)^{i-1} 
\left(\dN_{i+1,\A^1}^0(Z)-\dN_{i+1,\A^1}^{\infty}(Z)\right).
\end{align*} 
Hence one gets
\begin{multline*}
\da[\A^1]^{n+1}(h_{\A^1,n}^p(Z))=i_{0,\square}^*(Z)-i_{\infty, \square }^*(Z) \\
-\sum_{i=1}^{n}(-1)^{i-1} 
\left(h_{\A^1, n-1}^p \left( \dN_{i,\A^1\times \square^1}^0(Z)-
 \dN_{i,\A^1\times \square^1}^{\infty}(Z)\right)\right)
\end{multline*} 
which can be written has
\[
\da[\A^1]^{n+1}(h_{\A^1,n}^p(Z))
=i_{0,\square}^*(Z)-i_{\infty, \square }^*(Z) - 
h_{\A^1, n-1}^p\circ\da[\A^1 \times \square^1]^n(Z).
\] 
Thus one can compute 
 $
\da[\A^1]\circ \mu^* +\mu^*\circ \da[\A^1] 
$ on $\Ze[p](\A^1, n) $ as
\begin{align*}
\da[\A^1]\circ \mu^*+\mu^*\circ \da[\A^1] =&
\da[\A^1]\circ h_{\A^1,n} \circ \wt{m}^*+
h_{\A^1,n-1} \circ \wt{m}^* \circ \da[\A^1] \\
=&i_{0,\square}^*\circ \wt{m}^*-i_{\infty, \square }^*\circ \wt{m}^*
-h_{\A^1,n-1} \circ\da[\A^1]\circ \wt{m}^* \\
&\hskip 30ex +h_{\A^1,n-1} \circ \da[\A^1] \circ \wt{m}^* \\
=&i_{0,\square}^*\circ \wt{m}^*-i_{\infty, \square }^*\circ \wt{m}^*.
\end{align*}

The morphism $i_{\infty,\square}^*\circ \wt{m}^*$ is induced by
\[
\begin{tikzpicture}
\matrix (m) [matrix of math nodes,
 row sep=0.6em, column sep=2.5em, 
 text height=1.5ex, text depth=0.25ex] 
{\A^1 & \A^1 \times \square^1 & \A^1 \times \A^1 & \A^1 \\
x & (x, \infty) & (x, 0) & 0 \\
};
\path[->]
(m-1-1) edge node[mathsc,auto] {i_{\infty,\square}} (m-1-2)
(m-1-2) edge node[mathsc,auto] {\tau} (m-1-3)
(m-1-3) edge node[mathsc,auto] {m} (m-1-4);
\path[|->]
(m-2-1) edge node[mathsc,auto] {} (m-2-2)
(m-2-2) edge node[mathsc,auto] {} (m-2-3)
(m-2-3) edge node[mathsc,auto] {} (m-2-4);
\end{tikzpicture}
\]
which factors through
\[
\begin{tikzpicture}
\matrix (m) [matrix of math nodes,
 row sep=1.5em, column sep=2.5em, 
 text height=1.5ex, text depth=0.25ex] 
{\A^1 & \A^1 \times \square^1 & \A^1 \times \A^1 & \A^1 \\
\{0\} &  &  & \A^1 \\
};
\path[->]
(m-1-1) edge node[mathsc,auto] {i_{\infty,\square}} (m-1-2)
(m-1-2) edge node[mathsc,auto] {\tau} (m-1-3)
(m-1-3) edge node[mathsc,auto] {m} (m-1-4)
(m-1-1) edge node[mathsc,auto] {p_0} (m-2-1)
(m-2-1) edge node[mathsc,auto] {i_0} (m-2-4)
(m-1-4) edge node[mathsc,auto] {\id_{\A^1}} (m-2-4);
\end{tikzpicture}
\]
Thus, 
\[
i_{\infty,\square}^*\circ \wt{m}^*=(i_0\circ p_0)^*=p_0^*\circ i_0^*.
\]
 Similarly  $i_{0, \square }^*\circ \wt{m}^*$ is induced by 
\[
\begin{tikzpicture}
\matrix (m) [matrix of math nodes,
 row sep=0.6em, column sep=2.5em, 
 text height=1.5ex, text depth=0.25ex] 
{\A^1 & \A^1 \times \square^1 & \A^1 \times \A^1 & \A^1 \\
x & (x, 0) & (x, 1) & x \\
};
\path[->]
(m-1-1) edge node[mathsc,auto] {i_{\infty,\square}} (m-1-2)
(m-1-2) edge node[mathsc,auto] {\tau} (m-1-3)
(m-1-3) edge node[mathsc,auto] {m} (m-1-4);
\path[|->]
(m-2-1) edge node[mathsc,auto] {} (m-2-2)
(m-2-2) edge node[mathsc,auto] {} (m-2-3)
(m-2-3) edge node[mathsc,auto] {} (m-2-4);
\end{tikzpicture}
\]
which factors through $\id_{\A^1} : \A^1 \lra \A^1$ and one has 
\[
i_{0, \square }^*\circ \wt{m}^*=\id
\]
which concludes the proof of the proposition.
\end{proof}
 
\subsection{Weight $1$, weight $2$ and polylogarithm cycles}
\label{subsec:weight12Lin}
For now on, we set  $X=\ps$. 
\subsubsection{Two weight $1$  cycles generating the $\HH^1$}
As mentioned before, there is a decomposition of
$\HH^1(\cNg{X}{\bullet}(p))$ as 
\[
\HH^1(\cNg{X}{\bullet}(p))\simeq \HH^1(\cNg{\Q}{\bullet}(p))\oplus
\HH^{0}(\cNg{\Q}{\bullet}(p-1))\otimes \Q \Lc_0 
\oplus \HH^{0}(\cNg{\Q}{\bullet}(p-1))\otimes \Q \Lc_1
\] 
and $\Lc_0$ and $\Lc_1$ (which are in weight $1$ and degree $1$)
  generates the $\HH^*(\cNg{X}{\bullet})$ relatively to
  $\HH^*(\cNg{\Q}{\bullet})$. Explicit expression for $\Lc_0$ and
  $\Lc_1$ are given below.

In Example \ref{exmL0:A1}, a cycle $\ol{\Lc_0}$ was constructed using the graph
of $t \longmapsto t$ from $\A^1 \lra \A^1$. Taking its restriction to $X \times
\square^1$, and using the same convention, one gets a cycle
\begin{equation}\label{exmL0:X}
\Lc_0 = [t; t] \subset X \times \square^1, \qquad \Lc_0\in \cNg{X}{1}(1).
\end{equation}
Similarly, using the graph of $t \longmapsto 1-t$, one gets 
\begin{equation} \label{exmL1:X}
\Lc_1 = [t; 1-t] \subset X \times \square^1, \qquad \Lc_1\in \cNg{X}{1}(1).
\end{equation}
One notices that the cycles $\Lc_0$ and $\Lc_1$ are both equidimensional over $X=\ps$
but not equidimensional over $\A^1$.

Moreover, as 
\[
\Lc_0 \cap (X\times\{\ve\})=\Lc_0 \cap (\ps \times\{\ve\})=\emptyset
\]
for $\ve=0, \infty$, the
above intersection tells us that $\dN(\Lc_0)=0$. Similarly, one shows that
$\dN(\Lc_1)=0$. Thus $\Lc_0$ and $\Lc_1$ give two well defined classes in
$\HH^1(\cN[\bullet](1))$. 

In order to show that they are non-trivial and that they give the above decomposition of
the $\HH^1(\cNg{X}{1})$, one shows that, in the localization
sequence, their images under the boundary map 
\[
\HH^1(\Nc (1)) \st{\delta}{\lra} 
\HH^0(\Nc[\{0\}](0))\oplus  \HH^0(\Nc[\{1\}](0))
\] 
are non-zero. It is enough to treat the
case of $\Lc_0$. 
Recall that $\ol{\Lc_0}$ is the closure of $\Lc_0$ in $\A^1\times \square^1$ and  is
given by the parametrized cycle 
\[
\ol{\Lc_0}=[t; t] \subset \A^1\times \square^1.
\]
Its intersection with the face $u_1=0$ is of codimension $1$ in
$\A^1\times\{0\}$ and the intersection with $u_1=\infty$ is empty. Hence
$\ol{\Lc_0}$ is admissible.
%%%%%%%%%%%%%%%

Thus, considering the definition of $\delta$, $\delta(\Lc_0)$ is given by the
intersection of the differential 
of $\ol{\Lc_0}$ with $\{0\}$ and $\{1\}$ on the first and second
factor, respectively. The above discussion on the admissibility of $\ol{\Lc_0}$
tells us that 
$\delta(\Lc_0)$ is non-zero on the factor $\HH^0(\Nc[\{0\}](0)) $ and $0$ on the
other factor as the admissibility condition is trivial for $\Nc[\{0\}](0)
$ and the restriction of $\ol{\Lc_0}$ to $1$ is empty. The situation is reverse
for $\Lc_1$ using its closure $\ol{\Lc_1}$ in $\A^1\times \square^1$. 

Hence, even if  the differentials of $\Lc_0$ and $\Lc_1$ are $0$ in
$\cNg{X}{\bullet}$, the differentials of their closure in $\A^1$ are non-zero in
$\cNg{\A^1}{\bullet}$ and 
have a 
particular behavior when multiplied by an equidimensional cycle (see Lemma
\ref{dA1-L0L1} below and Equation \eqref{dA1-L0L01} for an example). We consider here
only equidimensional cycles as it is needed to work with such cycles in order to
pull-back by the multiplication. We use below notations of propositions
\ref{multequi} and \ref{multhomo}.
\begin{lem}\label{dA1-L0L1}
Let $C$ be an element in $\Nge{\A^1}{\bullet}$, then
\[
\da[\A^1](\ol{\Lc_0}) \, C =C|_{t=0}
\qquad \mx{and} \qquad 
\da[\A^1](\ol{\Lc_1})\,C=C|_{t=1} 
\]
where the notation $C|_{t=0}$ (resp. $C|_{t=1}$) denotes, as in Definition
\ref{def:equicycle}  the (image under the projector $\Alt$ of the) fiber at $0$
 (resp. $1$) of the  irreducible closed subvarieties composing the formal sum
 that defines $C$.
\end{lem}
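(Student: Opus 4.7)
The plan is to reduce the lemma to an explicit computation of $\da[\A^1](\ol{\Lc_0})$ and $\da[\A^1](\ol{\Lc_1})$ as concrete cycles, and then to unwind the cdga product on $\cNg{\A^1}{\bullet}$ applied to them.

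First I would compute $\da[\A^1](\ol{\Lc_0})$ directly from the definition of the differential as the alternating sum of pullbacks along the two codimension-one faces of $\square^1$. Since $\ol{\Lc_0} = [t;t]$ is the graph of the identity on $\A^1$ in $\A^1\times\square^1$, the face $u_1=\infty$ does not meet $\ol{\Lc_0}$, while the face $u_1=0$ cuts out the single closed point $t=0$. Hence $\da[\A^1](\ol{\Lc_0}) = [0]$, the class of the origin viewed as a codimension-one cycle in $\A^1 \simeq \A^1\times\square^0$, i.e.\ an element of $\cNg{\A^1}{2}(1)$. A completely parallel calculation for $\ol{\Lc_1}=[t;1-t]$ gives $\da[\A^1](\ol{\Lc_1}) = [1]$.

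Next I would unpack the product $[0]\cdot C$ via the explicit diagonal description of the cdga structure recalled before the lemma. The product is obtained by concatenating cube factors, forming the external product in $\cNg{\A^1\times\A^1}{\bullet}$, and then pulling back along the diagonal $\Delta_{\A^1}\colon \A^1\to\A^1\times\A^1$, finally composing with $\Alt$. Since $[0]$ is set-theoretically supported over $t=0$, the external product $[0]\boxtimes C$ is supported on $\{0\}\times C$ inside $\A^1\times\A^1\times\square^n$, and its pullback by $\Delta_{\A^1}\times\id_{\square^n}$ is exactly the scheme-theoretic fiber $C\cap(\{0\}\times\square^n) = C|_{t=0}$, pushed forward into $\A^1\times\square^n$ along $\{0\}\hookrightarrow\A^1$. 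Applying $\Alt$ and invoking Definition \ref{def:equicycle} identifies this with $C|_{t=0}$. The computation for $\ol{\Lc_1}$ is identical after substituting $1$ for $0$.

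The main obstacle will be checking that the intersection-theoretic pullback along the diagonal is actually a pullback \emph{of cycles}, not merely of cycle classes, and that it coincides with the naive fiber $C|_{t=0}$ with no excess-intersection contribution. This is precisely where the hypothesis that $C$ lies in the equidimensional subcomplex $\Nge{\A^1}{\bullet}$ is used: equidimensionality of $C\cap(\A^1\times F)\to\A^1$ for every face $F$ ensures that $C|_{t=0}$ has the expected dimension $\dim(F)-p$, so the pullback $\Delta_{\A^1}^{\ast}([0]\boxtimes C)$ is well defined at the cycle level and agrees on the nose with the fiber.
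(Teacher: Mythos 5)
Your proposal is correct and follows essentially the same route as the paper: both compute $\da[\A^1](\ol{\Lc_\ve})$ explicitly as the point $\{\ve\}$ (the face $u_1=\infty$ missing the graph, the face $u_1=0$ cutting out $t=\ve$), then realize the product as the diagonal pullback of $\{\ve\}\times Z$ in $\A^1\times\A^1\times\square^n$, using the equidimensionality of $C$ over $\A^1$ to guarantee that the intersection with $\Delta\times F$ has the expected codimension $p+1$ so that the cycle-level pullback is the naive fiber $C|_{t=\ve}$. Your closing remark correctly isolates the one place where the hypothesis $C\in\Nge{\A^1}{\bullet}$ is genuinely used, which is exactly the point the paper's proof turns on.
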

\begin{proof}
It is enough to assume that $C$ is given by  $C=\Alt(Z)$ where $Z$ is  an 
irreducible closed subvariety of $ \A^1 \times \square^n$ such
  that for any face
  $F$ of $\square^n$, the intersection $Z\cap (X\times F) $ is empty or the
  restriction of $p_1 : \A^1 \times \square^n \lra \A^1$ to  
\[
 Z\cap (\A^1\times F) \lra \A^1
\]
is equidimensional of relative dimension $\dim(F)-p$.

Remark that for $\ve=0,1$ the cycle  $\da[\A^1](\ol{\Lc_{\ve}})$ is given by the
point
\[
\{\ve\} \in \A^1
\]
which is of codimension $1$ in $\A^1$. In order to compute the product
$\da[\A^1](\ol{\Lc_{\ve}})\,C$, one considers first the product in $\A^1 \times
\A^1 \times \square^n$:
\[
\{\ve\}\times Z \subset \A^1 \times \A^1 \times \square^n.
\]  
Let $\Delta$ denote the image of the diagonal $\A^1 \lra \A^1 \times \A^1$. The
equidimensionality of $Z$ insures that for any face $F$ of $\square^n$
\[
\left(\{\ve\}\times Z \right)\bigcap
\left( \Delta \times F\right)
\simeq \left(Z \cap (\{\ve\}\times \square^n)\right) \bigcap
\left(\A^1\times F \right)
\]
is of codimension $p+1$. Thus the product $\da[\A^1](\ol{\Lc_{\ve}})\,C$ is simply
the image under $\Alt$ of 
\[
Z\cap (\{\ve\}\times \square^n)=Z|_{t=\ve} \subset \A^1 \times \square^n.
\]
\end{proof}
%\paragraph{A weight $2$ example: the Totaro cycle}
\subsubsection{A weight $2$ example: the Totaro cycle}
One considers the linear combination
\[
b=\Lc_{0}\cdot \Lc_{1} \in \cN[2](2).
\]
It is given as a parametrized cycle by 
\[
b=[t; t, 1-t] \subset X \times
\square^2
\]
or in terms of defining equations by
\[
T_1V_1-U_1T_2= 0 \qquad \mbox{and} \qquad U_1V_2+U_2V_1=V_1V_2
\]
where $T_1$ and $T_2$ denote the homogeneous coordinates on $X=\ps$ and $U_i$, $V_i$
the homogeneous coordinates on each factor $\square^1=\p^1\sm \{1\}$ of $\square^2$.
One sees that the intersection of $b$ with faces $U_i$ or $V_i=0$ for 
$i=1,2$ is empty because $T_1$ and $T_2$ are different from $0$ in $X$ and
because $U_i$ is different from $V_i$ in $\square^1$. Thus it tells us  that 
\[
\dN(b)=0.
\] 

Now, let $\ol b$ denote the algebraic closure of $b$ in $\A^1\times \square^2$. 
 As previously, its expression as parametrized cycle is 
\[
\ol{b}=\ol{\Lc_0}\, \ol{\Lc_1}=[t; t, 1-t] \subset \A^1 \times
\square^2
\]
and the intersection with
$\A^1\times F$ for any codimension $1$ face  $F$ of $\square^2$ is empty. 
Writing, as before, $\dN_{\A^1}$ for the differential in $\mc N_{\A^1}$, one has
$\dN_{\A^1}(\ol b)=0$.

As $\ol{\Lc_0}$ (resp. $\ol{\Lc_1}$) is equidimensional over $\A^1 \sm \{0\}$
(resp. over $\A^1 \sm \{1\}$), the cycle $\ol{b}$ is equidimensional over
$\A^1\sm\{0,1\}$. Moreover, as $\ol{\Lc_0}$ (resp. $\ol{\Lc_1}$) has an empty
fiber at $1$ (resp. at $0$), $\ol{b}$ has empty fiber at both $0$ and $1$.
So $\ol{b }$ is equidimensional over $\A^1$ with empty fibers at $0$ and $1$. Following
notations of Proposition \ref{multhomo}, one defines two elements in $\cNg{\A^1}
1 (2)$ by
 pull back by the multiplication ( resp. twisted multiplication): 
\begin{equation} \label{exmL01:A1}
\ol{\Lc_{01}}=\mu^*(\ol{b}) \qquad \mx{and} \qquad 
\ol{\Lcu_{01}}=\nu^*(\ol{b}).
\end{equation}
One also defines their restrictions to $X$
\begin{equation} \label{exmL01:X}
\Lc_{01}=j^*(\ol{\Lc_{01}}) \qquad \mx{and} \qquad
\Lcu_{01}=j^*(\ol{\Lcu_{01}}).
\end{equation}

Now, direct application of Proposition \ref{multhomo} shows that
\[
\dN_{\A^1}(\ol{\Lc_{01}})=-\mu^*(\da[\A^1](\ol b))+\ol b -p_0^*\circ i_0^*(b) 
=-0+\ol{\Lc_0} \, \ol{\Lc_1}-0
\]
because $\ol b$ has empty fiber at $0$ and is $0$ under $\da[\A^1]$. More
generally, as $j^*$ is a morphism of c.d.g.a., Proposition \ref{multhomo} gives
the following.  
\begin{lem} Cycles $\Lc_{01}$, $\ol{\Lc_{01}}$, $\Lcu_{01}$ and
  $\ol{\Lcu_{01}}$ satisfy the following properties 
\begin{enumerate}
\item $\Lc_{01}$ and  $\Lcu_{01}$ (resp. $\ol{\Lc_{01}}$ and $\ol{\Lcu_{01}}$)
  are equidimensional over $X$, that is elements in $\Nge{X} 1 (2)$
  (resp. equidimensional over
  $\A^1$).
\item They satisfy the following differential equations 
\[
\dN(\Lc_{01})=\dN(\Lcu_{01})=b=\Lc_0 \, \Lc_1
\]
and $\da[\A^1](\ol{\Lc_{01}})=\da[\A^1](\ol{\Lcu_{01}})=\ol{b}=\ol{\Lc_0}\, \ol{\Lc_1}$.
\item By the definition given in Equation
  \eqref{exmL01:X}, the cycle $\ol{\Lc_{01}}$ (resp. $\ol{\Lcu_{01}}$) extends $\Lc_{01}$
  (resp. $\Lcu_{01}$)  over $\A^1$  and  has an empty   fiber at $0$ (resp. at $1$). 
\end{enumerate}
\end{lem}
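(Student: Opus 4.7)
The plan is to read off each part directly from the propositions of the previous subsection, applied to the cycle $\ol b$, whose key properties have been established just above the lemma: $\ol b \in \Nge{\A^1}{2}(2)$, $\da[\A^1](\ol b)=0$, and $i_0^*(\ol b)=i_1^*(\ol b)=0$ (empty fibers at $0$ and $1$).

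For part (1), I would invoke Proposition \ref{multequi}: since $\ol b$ is equidimensional over $\A^1$ with empty fiber at $0$, the cycle $\mu^*(\ol b)=\ol{\Lc_{01}}$ lies in $\Nge{\A^1}{1}(2)$; symmetrically $\nu^*(\ol b)=\ol{\Lcu_{01}}$ lies in $\Nge{\A^1}{1}(2)$ using the empty fiber at $1$. The restrictions $\Lc_{01}=j^*(\ol{\Lc_{01}})$ and $\Lcu_{01}=j^*(\ol{\Lcu_{01}})$ then live in $\Nge{X}{1}(2)$ by the cdga morphism $j^*$ of Proposition \ref{equiopen}.

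For part (2), the homotopy formula of Proposition \ref{multhomo} gives
\[
\da[\A^1](\ol{\Lc_{01}}) \;=\; \da[\A^1]\circ\mu^*(\ol b)
\;=\; \ol b - p_0^*\circ i_0^*(\ol b) - \mu^*\circ\da[\A^1](\ol b).
\]
The last two terms vanish because $\ol b$ has empty fiber at $0$ and because $\da[\A^1](\ol b)=0$, leaving $\da[\A^1](\ol{\Lc_{01}})=\ol b=\ol{\Lc_0}\,\ol{\Lc_1}$. The same argument with $\nu^*$ and the empty fiber at $1$ yields $\da[\A^1](\ol{\Lcu_{01}})=\ol b$. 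Applying the cdga morphism $j^*$ (which sends $\ol b$ to $b$ and $\ol{\Lc_{01}}$ to $\Lc_{01}$, etc.) gives the equalities $\dN(\Lc_{01})=\dN(\Lcu_{01})=b=\Lc_0\,\Lc_1$.

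For part (3), the fact that $\ol{\Lc_{01}}$ extends $\Lc_{01}$ over $\A^1$ is immediate from the definition $\Lc_{01}=j^*(\ol{\Lc_{01}})$. The empty fiber statements are exactly the last bullet of Proposition \ref{multequi}: $\mu^*$ sends cycles with empty fiber at $0$ to cycles with empty fiber at $0$ (applied to $\ol b$), and $\nu^*$ does the analogous thing at $1$. No step is really the main obstacle here; all the work has been done in Propositions \ref{multequi} and \ref{multhomo}, so the lemma is essentially a bookkeeping combination of those statements with the already-noted vanishings $\da[\A^1](\ol b)=0$ and $i_\varepsilon^*(\ol b)=0$.
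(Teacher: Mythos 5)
Your proposal is correct and follows essentially the same route as the paper: the paper likewise establishes that $\ol b=\ol{\Lc_0}\,\ol{\Lc_1}$ is equidimensional over $\A^1$ with empty fibers at $0$ and $1$ and satisfies $\da[\A^1](\ol b)=0$, then reads off the lemma as a direct application of Propositions \ref{multequi} and \ref{multhomo} together with the fact that $j^*$ is a morphism of c.d.g.a.'s. Your bookkeeping of the homotopy identity $\da[\A^1]\circ\mu^*+\mu^*\circ\da[\A^1]=\id-p_0^*\circ i_0^*$ matches the computation the paper displays just before stating the lemma.
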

Moreover, one can explicitly compute the two pull-backs and obtain parametric
representations 
\begin{equation} \label{L01:para}
\Lc_{01} =[t; 1-\frac{t}{x}, x, 1-x], \qquad \Lcu_{01} =[t; \frac{x-t}{x-1}, x, 1-x].
\end{equation}
The multiplication map inducing $\mu^*$ is given by

\[ \begin{tikzpicture}[baseline={([yshift=0.5em] current bounding box.south)}]
 \matrix (m) [matrix of math nodes,
 row sep=0.6em, column sep=2em, 
text height=1.5ex, text depth=0.25ex] 
 {\A^1 \times \square^1 \times \square^{2} 
& A^1 \times  \square^{2}  \\
};
\path[->,font=\scriptsize]
(m-1-1) edge node[auto] {} (m-1-2);
%(m-2-1) edge node[auto] {} (m-2-2)
%\path[|->,font=\scriptsize]
%(m-2-1) edge node[auto] {$$} (m-2-2);
\end{tikzpicture}\!,
\quad
\begin{tikzpicture}[baseline={([yshift=0.5em] current bounding box.south)}]
 \matrix (m) [matrix of math nodes,
 row sep=0.6em, column sep=2em, 
text height=1.5ex, text depth=0.25ex] 
 {
{[t ; u_1, u_2, u_{3}]}  &
{[\frac{t}{1-u_1} ; u_2, u_{3}]}   \\
 };
%\path[->,font=\scriptsize]
%(m-1-1) edge node[auto] {$\mu$} (m-1-2);
%(m-2-1) edge node[auto] {} (m-2-2)
\path[|->,font=\scriptsize]
(m-1-1) edge node[auto] {$$} (m-1-2);
 \end{tikzpicture}. \]
In order to compute the pull-back, one should remark that if $u=1-t/x$ then 
\[
\frac{t}{1-u} = x.
\]
Computing the pull-back by $\mu^*$ is then just rescaling the new $\square^1$
factor which arrives in first position. The case of $\nu^*$ is similar but using
the fact that for $u=\frac{x-t}{x-1}$ one has
\[
\frac{t-u}{1-u}=x.
\]
\begin{rem}
The cycle $\Lc_{01}$ is nothing but Totaro's cycle \cite{Totaro}, already described in
\cite{BKMTM, BlochLie}.

Moreover, $\Lc_{01}$ corresponds to the function $t \mapsto Li^{\C}_2(t)$ as shown in
\cite{BKMTM}.% or in \cite{GanGonLev05}.

One recovers the value $\zeta(2)$ by specializing at $t=1$ using the extension
of $\Lc_{01}$ to $\A^1$.
\end{rem}

\subsubsection{Polylogarithm cycles}
By induction one can build cycles $\on{Li}^{cy}_n=\Lc_{0\cdots 01}$ ($n-1$
zeros and one $1$). We define
$\on{Li}^{cy}_1$ to be equal to $\Lc_1$.
\begin{lem}\label{lem:cycleLi_k}
For any integer $n \geqs 2$ there exists an equidimensional cycle  over $X$,
$\on{Li}^{cy}_{n}$ in $\Nge{X}{1}(n)\subset \cNg{X}{1}(n)$ satisfying
\begin{enumerate}
\item There is an equidimensional cycle over $\A^1$, $\ol{\on{Li}^{cy}_n}$ in
    $\Nge{\A^1}{1}(n)$, such that $\on{Li}^{cy}_{n}=j^*(\ol{\on{Li}^{cy}_n})$ (it has
  in particular a well defined fiber at $1$).
\item The cycle $\ol{\on{Li}^{cy}_{n}}$ has empty fiber at $0$.  
\item The cycles $\on{Li}^{cy}_{n}$ and $\ol{\on{Li}^{cy}_{n}}$ satisfy the
  differential equations 
\[
\dN(\on{Li}^{cy}_{n})=\Lc_0\cdot \on{Li}^{cy}_{n-1}
\qquad \mx{and} \qquad
\dN(\ol{\on{Li}^{cy}_{n}})=\ol{\Lc_0}\cdot \ol{\on{Li}^{cy}_{n-1}}.
\]
\item $\on{Li}^{cy}_{n}$ is explicitly given as a parametrized cycle by
\[
[t; 1-\frac{t}{x_{n-1}}, x_{n-1},1-\frac{x_{n-1}}{x_{n-2}}, x_{n-2}, 
\ldots, 1-\frac{x_2}{x_{1}}, x_1, 1-x_1 ] 
\subset X \times \square^{2n-1}.
\] 
\end{enumerate}
\end{lem}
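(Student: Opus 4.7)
I would prove the four properties simultaneously by induction on $n$, using the multiplication homotopy $\mu^*$ of Proposition \ref{multhomo} as the engine for producing $\ol{\on{Li}^{cy}_n}$ from $\ol{\on{Li}^{cy}_{n-1}}$.

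\textbf{Base case.} For $n=2$, everything is already provided: take $\on{Li}^{cy}_2 = \Lc_{01}$ and $\ol{\on{Li}^{cy}_2} = \ol{\Lc_{01}}$ as defined in \eqref{exmL01:X} and \eqref{exmL01:A1}. The four properties were verified in the discussion preceding the lemma (equidimensionality over $\A^1$, empty fiber at $0$, differential $\dN(\Lc_{01}) = \Lc_0 \cdot \Lc_1$, and the parametric form from \eqref{L01:para}).

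\textbf{Inductive step.} Assuming $\ol{\on{Li}^{cy}_{n-1}} \in \Nge{\A^1}{1}(n-1)$ with empty fiber at $0$ and parametric representation as stated, I would first form the product $\ol{\Lc_0} \cdot \ol{\on{Li}^{cy}_{n-1}}$. Here is the subtle point: $\ol{\Lc_0}$ itself is only equidimensional over $\A^1 \sm \{0\}$, so a priori the product sits in $\cNg{\A^1}{2}(n)$ rather than in $\Nge{\A^1}{2}(n)$. However, since $\ol{\on{Li}^{cy}_{n-1}}$ has empty fiber at $0$, the product cycle avoids the bad locus of $\ol{\Lc_0}$ entirely, so the product has empty fiber at $0$ and one checks using Proposition \ref{equiopen} (applied to the open $\A^1 \sm \{0\} \hookrightarrow \A^1$) that it actually lies in $\Nge{\A^1}{2}(n)$. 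Then I would \emph{define}
\[
\ol{\on{Li}^{cy}_n} := \mu^*\bigl(\ol{\Lc_0} \cdot \ol{\on{Li}^{cy}_{n-1}}\bigr) \in \Nge{\A^1}{1}(n), \qquad \on{Li}^{cy}_n := j^*(\ol{\on{Li}^{cy}_n}),
\]
which makes (1) immediate. Property (2), that $\ol{\on{Li}^{cy}_n}$ has empty fiber at $0$, follows from the last bullet of Proposition \ref{multequi} applied to $\ol{\Lc_0}\cdot \ol{\on{Li}^{cy}_{n-1}}$ (whose fiber at $0$ is empty).

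\textbf{Differential equation.} For (3), apply the homotopy identity from Proposition \ref{multhomo}:
\[
\da[\A^1]\bigl(\ol{\on{Li}^{cy}_n}\bigr) = \bigl(\ol{\Lc_0}\cdot \ol{\on{Li}^{cy}_{n-1}}\bigr) - p_0^*i_0^*\bigl(\ol{\Lc_0}\cdot \ol{\on{Li}^{cy}_{n-1}}\bigr) - \mu^*\,\da[\A^1]\bigl(\ol{\Lc_0}\cdot \ol{\on{Li}^{cy}_{n-1}}\bigr).
\]
The middle term vanishes because the fiber at $0$ is empty. For the last term, the Leibniz rule gives $\da[\A^1](\ol{\Lc_0}\cdot\ol{\on{Li}^{cy}_{n-1}}) = \da[\A^1](\ol{\Lc_0})\cdot \ol{\on{Li}^{cy}_{n-1}} - \ol{\Lc_0}\cdot\da[\A^1](\ol{\on{Li}^{cy}_{n-1}})$; the first summand vanishes by Lemma \ref{dA1-L0L1} (it evaluates $\ol{\on{Li}^{cy}_{n-1}}$ at $0$, which is empty), and the second equals $\ol{\Lc_0}\cdot\ol{\Lc_0}\cdot \ol{\on{Li}^{cy}_{n-2}}$ by induction, which vanishes after applying $\Alt$ (the two identical $\Lc_0$ factors are killed by the sign representation). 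Restricting via $j^*$ yields the differential equation over $X$.

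\textbf{Parametric form.} The main computation for (4) is tracking the effect of $\mu^*$ on parametric cycles. If a cycle in $\A^1\times\square^{m}$ is given as $[s; f_1(s), \ldots, f_m(s)]$, then its pullback by $\wt m$ consists of $(t,u,v_1,\ldots,v_m)$ with $v_i = f_i(t/(1-u))$; setting $x = t/(1-u)$ so that $u = 1 - t/x$, regrouping via $h_{\A^1,m}^p$ yields
\[
[t; 1 - t/x,\ f_1(x),\ \ldots,\ f_m(x)].
\]
Applied to $\ol{\Lc_0}\cdot\ol{\on{Li}^{cy}_{n-1}} = [t;\ t,\ 1-t/x_{n-2},\ x_{n-2},\ \ldots,\ x_1,\ 1-x_1]$ with renaming $x \rightsquigarrow x_{n-1}$, this produces exactly the claimed expression for $\on{Li}^{cy}_n$. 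The main obstacle throughout is bookkeeping: keeping track of which coordinate is which under the iterated rescaling, and justifying rigorously that products like $\ol{\Lc_0}\cdot\ol{\on{Li}^{cy}_{n-1}}$ make sense as equidimensional cycles even though one factor fails to be equidimensional globally on $\A^1$.
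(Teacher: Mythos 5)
Your proof is correct and follows essentially the same route as the paper: induction starting from $\Lc_{01}$, forming $\ol{\Lc_0}\cdot\ol{\on{Li}^{cy}_{n-1}}$ (equidimensional with empty fiber at $0$ precisely because the second factor kills the bad locus of $\ol{\Lc_0}$), checking that its differential vanishes via the Leibniz rule, Lemma~\ref{dA1-L0L1} and the $\Alt$ projector, then applying $\mu^*$ and Proposition~\ref{multhomo}, with the same rescaling computation for the parametric form. The only cosmetic difference is that you cite Proposition~\ref{equiopen} explicitly where the paper argues equidimensionality directly.
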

\begin{proof}
For $n=2$, we have already defined $\on{Li}^{cy}_2=\Lc_{01}$ satisfying the
expected properties.

Assume that one has built the cycles $\on{Li}^{cy}_k$ for $2\leqs k< n$.
One considers in $\cNg{\A^1}{2}(n)$ the product
\[
\ol b=\ol{\Lc_{0}}\cdot \ol{\on{Li}^{cy}_{n-1}}=[t; t,  1-\frac{t}{x_{n-2}},
x_{n-2},1-\frac{x_{n-2}}{x_{n-3}}, x_{n-3},  
\ldots, 1-\frac{x_2}{x_{1}}, x_1, 1-x_1].
\]
As $\ol{\Lc_{0}}$ is equidimensional over $\A^1\sm \{0\}$ and
as $\ol{\on{Li}^{cy}_{n-1}}$ is equidimensional over $\A^1$,  $\ol b$ is
equidimensional over $\A^1\sm\{0\}$. Moreover, as  $\ol{\on{Li}^{cy}_{n-1}}$ has empty
fiber at $0$, $\ol b$ is equidimensional over $\A^1$ with empty fiber at $0$.

Computing the differential with the Leibniz rule and Lemma \ref{dA1-L0L1}, one gets 
\[
\dN_{\A^1}(\ol{b})=\ol{\on{Li}^{cy}_{n-1}}|_{t=0}
-\ol{\Lc_{0}}\cdot \ol{\Lc_{0}}\cdot \ol{\on{Li}^{cy}_{n-2}}=0.
\] 
One concludes using Proposition \ref{multhomo}. The same argument used to obtain
the parametrized representation for $\Lc_{01}$ at Equation \eqref{L01:para}
shows that 
\[
\on{Li}^{cy}_n=[t; 1-\frac{t}{x_{n-1}},x_{n-1},1- \frac{x_{n-1}}{x_{n-2}}, x_{n-2},\ldots, 1
-\frac{x_2}{x_1} ,x_1,1-x_1 ] \subset \A^1 \times \square^{2n-1}.
\]
\end{proof}
\begin{rem}~

\begin{itemize}
\item One retrieves the expression given in \cite{BKMTM}.% or in \cite{GanGonLev05}.
\item Moreover, $\on{Li}^{cy}_n$ corresponds to the function $t \mapsto
  Li_n^{\C}(t)$ as shown in 
\cite{BKMTM} (or in \cite{GanGonLev05}).
\item $\ol{\Lc_0}$ having an empty fiber at $1$, one can also pull-back by the
  twisted multiplication and obtain similarly cycles $\Lcu_{0\cdots 01}$
  satisfying $\dN(\Lcu_{0\cdots 01})=\dN(\Lc_{0\cdots 01})$. In some sense, 
  they correspond to $\Lc_{0\cdots 01}-p^*\circ i_1^*(\Lc_{0\cdots
    01})$ which in terms of integrals corresponds to $Li^{\C}_n(t)-\zeta(n)$. 
\end{itemize}
\end{rem}

\subsection{Some higher weight examples for multiple polylogarithm cycles}
\subsubsection{Weight $3$}

The cycle $\Lc_{01}$ was defined previously, so was the cycle
$\Lc_{001}=\on{Li}^{cy}_3$ by considering the product
\[
b=\Lc_{0}\cdot \Lc_{01}.
\]

Now, in weight $3$, one could also consider the product
\begin{equation}\label{introL01(1)}
\Lc_{01}\cdot \Lc_{1} \quad \in \cN[2](3).
 \end{equation}
However the above product does not lead by similar arguments to a new
cycle. Before explaining how to follow the strategy used in weight $2$ and for
the polylogarithms in order to obtain another weight $3$ cycle, the author would
like to spend a little time on the obstruction occurring with the product in
Equation \eqref{introL01(1)} 
as it enlightens in particular the need of the cycle $\Lcu_{01}$ previously built.  

Thus let $b=\Lc_{01}\cdot \Lc_{1}$ be the above product in $\cN[2](3)$, 
 given as a parametrized cycle by
\[
b=[t;1-\frac{t}{x_1}, x_1, 1-x_1, 1-t] \quad \subset X\times \square^4.
\]
From this expression, one sees that $b$ is admissible and that  $\dN(b)=0$ 
because $t\in X$ can not be equal to $1$.

Let $\ol b$ be the closure of the defining cycle of $b$ in $\A^1 \times
\square^4$, that is  the image under the projector $\Alt$ of 
\[
\left\{(t,1-\frac{t}{x_1}, x_1, 1-x_1, 1-t) \mbox{ such that } t \in \A^1, \,
x_1 \in \p^1 \right\}\cap \A^1 \times \square^4.
\]
Let $u_i$ denote the coordinate on the $i$-th factor $\square^1$.
As most of the intersections of $\ol b$ with face $\A^1 \times F$ are empty,  in
order to prove  that $\ol b$ is admissible and gives an element in
$\cNg{\A^1}{2}(3)$, it is enough to check the (co)dimension condition on the three 
faces : $u_1=0$, $u_4=0$ and $u_1=u_4=0$. The intersection of $\ol b$ with the
face $u_1=u_4=0$ is empty as $u_2\neq 1$. The intersection $\ol b$ with the face defined
by $u_1=0$ or $u_4=0$ is $1$ dimensional and so of codimension $3$ in
$\A^1\times F$.

 Computing the differential in $\cNg{\A^1}{\bullet}$, using Lemma \ref{dA1-L0L1} or
 the fact that
the intersection with $u_1=0$ is killed by the projector $\Alt$, gives
\begin{equation}\label{dA1-L0L01}
\dN_{\A^1}(\ol b)=\dN_{\A^1}( \ol{\Lc_{01}} \ol{\Lc_{1}})=-\ol{\Lc_{01}}|_{t=1}\neq 0
\end{equation}
and the homotopy trick used previously will not work as it relies (partly) on
beginning with a cycle $\ol b$ satisfying $\da[\A^1](\ol b)=0$.

In order to bypass this, one could introduce the constant cycle
$ \Lc_{01}(1)=p^*\circ i_1^*(\ol{\Lc_{01}})$ and  consider the linear combination
\begin{equation}\label{bL011}
\ol b=(\ol{\Lc_{01}}-\ol{\Lc_{01}}(1))\cdot \ol{\Lc_{1}} \in  \cNg{\A^1}{2}(3).
\end{equation}
and its equivalent in $\cNg{X}{2}(3)$. Now, the correction by
$-\ol{\Lc_{01}}(1)\cdot \ol{ \Lc_1}$ 
insures that $ \dN_{\A^1}(\ol b)=0$.

However, it is still not good enough as the use of the homotopy property for the
pull-back by the multiplication requires to work with equidimensional cycles
which is not the case for $\ol b$ (the problem comes from the fiber at $1$).

The fact that $\ol{\Lc_1}$ is not equidimensional over $\A^1$ but equidimensional on
$\A^1 \sm\{1\}$ requires to multiply it by a cycle with an empty fiber at
$1$ which insures that the fiber of the product at $1$ is empty. 
Thus one considers the product in $\Nge{\A^1}{2}(3)$ 
\[
\ol b =\ol{\Lcu_{01}}\,\ol{\Lc_1}=-\ol{\Lc_1}\, \ol{\Lcu_{01}}
\]
which has an empty fiber at $0$ and $1$. Moreover the Leibniz rule and Lemma
\ref{dA1-L0L1} imply that 
\[
\da[\A^1](\ol b)=\da[\A^1](\ol{\Lcu_{01}})\, \ol{\Lc_1}
- \ol{\Lcu_{01}}\,\da(\ol{\Lc_1})
=\ol{\Lc_1}\, \ol{\Lc_0} \ol{\Lc_1} -\ol{\Lcu_{01}}|_{t=1}=0.
\]
Thus one defines 
\begin{equation}\label{exmL011:A1}
\ol{\Lc_{011}}=\mu^*(\ol{\Lcu_{01}}\,\ol{\Lc_1})
\qquad \mx{and} \qquad 
\ol{\Lcu_{011}}=\nu^*(\ol{\Lcu_{01}}\,\ol{\Lc_1})
\end{equation}
and their restrictions to $X = \ps$ 
\begin{equation}\label{exmL011:X}
\Lc_{011}=j^*(\ol{\Lc_{011}})
\qquad \mx{and} \qquad 
\Lcu_{011}=j^*(\ol{\Lcu_{011}}).
\end{equation}
As previously, propositions \ref{multhomo} and \ref{multequi} insure the following.
\begin{lem} The cycles $\Lc_{011}$, $\ol{\Lc_{011}}$, $\Lcu_{011}$ and
  $\ol{\Lcu_{011}}$ satisfy the following properties 
\begin{enumerate}
\item $\Lc_{011}$ and  $\Lcu_{011}$ (resp. $\ol{\Lc_{011}}$ and $\ol{\Lcu_{011}}$)
  are  in $\Nge{X} 1 (2)$ (resp. in $\Nge{\A^1} 1 (2)$).
\item They satisfy the following differential equations 
\[
\dN(\Lc_{011})=\dN(\Lcu_{011})=\Lcu_{01} \, \Lc_1=-\Lc_1 \, \Lcu_{01}
\]
and $\da[\A^1](\ol{\Lc_{011}})=\da[\A^1](\ol{\Lcu_{011}})=\ol{\Lcu_{01}}\, \ol{\Lc_1}$.
\item The cycle $\ol{\Lc_{011}}$ (resp. $\ol{\Lcu_{011}}$) has an empty   fiber
  at $0$ (resp. at $1$).  
\end{enumerate}
\end{lem}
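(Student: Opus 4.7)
The plan is to apply Propositions \ref{multequi} and \ref{multhomo} to the auxiliary cycle $\ol b=\ol{\Lcu_{01}}\cdot\ol{\Lc_1}\in\Nge{\A^1}{2}(3)$, in complete analogy with the treatment of $\ol{\Lc_{01}}$ and of the polylogarithm cycles in Lemma \ref{lem:cycleLi_k}. The whole content of the lemma reduces to checking that $\ol b$ satisfies the two hypotheses needed to feed it into $\mu^*$ and $\nu^*$: it is equidimensional over $\A^1$ with empty fibers at the appropriate points, and it is a $\da[\A^1]$-cocycle.

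First I would verify equidimensionality and fibers of $\ol b$. By the previous lemma, $\ol{\Lcu_{01}}$ lies in $\Nge{\A^1}{1}(2)$ and has empty fiber at $1$; by its parametric representation $\ol{\Lc_1}=[t;1-t]$, this cycle lies in $\Nge{\A^1\sm\{1\}}{1}(1)$ and has empty fiber at $0$ (since $1-t=1$ forces coordinate $1\notin\square^1$). Because $\ol{\Lcu_{01}}$ has empty fiber at $1$, the product $\ol b$ has empty fiber at $1$, and the failure of $\ol{\Lc_1}$ to be equidimensional at $1$ is irrelevant; by Proposition \ref{equiopen} (or direct inspection) $\ol b$ is therefore equidimensional over all of $\A^1$. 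Symmetrically, empty fiber of $\ol{\Lc_1}$ at $0$ gives empty fiber of $\ol b$ at $0$.

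Next I would compute $\da[\A^1](\ol b)$ using the Leibniz rule together with Lemma \ref{dA1-L0L1}. The differential of $\ol{\Lcu_{01}}$ is $\ol{\Lc_0}\cdot\ol{\Lc_1}$, and $\da[\A^1](\ol{\Lc_1})\cdot C=C|_{t=1}$ applied to $\ol{\Lcu_{01}}$ vanishes by the empty-fiber-at-$1$ property. Collecting signs one obtains
\[
\da[\A^1](\ol b)=\ol{\Lc_0}\cdot\ol{\Lc_1}\cdot\ol{\Lc_1}\;\pm\;\ol{\Lcu_{01}}|_{t=1}=0,
\]
the first summand vanishing because $\ol{\Lc_1}\cdot\ol{\Lc_1}$ is killed by $\Alt$.

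With these two facts, the lemma follows mechanically. Proposition \ref{multequi} gives that $\mu^*(\ol b)=\ol{\Lc_{011}}$ and $\nu^*(\ol b)=\ol{\Lcu_{011}}$ belong to $\Nge{\A^1}{1}(3)$, and preserves the empty fiber at $0$ (resp.\ at $1$), giving item (3). Proposition \ref{multhomo} then yields
\[
\da[\A^1](\ol{\Lc_{011}})=\ol b-p_0^*\circ i_0^*(\ol b)-\mu^*(\da[\A^1](\ol b))=\ol b,
\]
since $p_0^*\circ i_0^*(\ol b)=0$ by emptiness of the fiber at $0$, and analogously for $\nu^*$ using the fiber at $1$. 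Restricting by the cdga morphism $j^*:\Nge{\A^1}{\bullet}\lra\Nge{X}{\bullet}$ of Proposition \ref{equiopen} transfers these relations to $\Lc_{011}$ and $\Lcu_{011}$, completing items (1) and (2). No step is really an obstacle here; the only subtlety is bookkeeping the signs when applying Leibniz to $\ol{\Lcu_{01}}\cdot\ol{\Lc_1}$ and recognizing that all correction terms that would spoil $\da[\A^1](\ol b)=0$ are precisely cancelled by either the $\Alt$ projector or the empty-fiber-at-$1$ property of $\ol{\Lcu_{01}}$, which is the very reason $\Lcu_{01}$ was introduced in the first place.
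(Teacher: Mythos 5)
Your proposal is correct and follows essentially the same route as the paper: form $\ol b=\ol{\Lcu_{01}}\cdot\ol{\Lc_1}$, observe it is equidimensional over $\A^1$ with empty fibers at $0$ and $1$ (the empty fiber of $\ol{\Lcu_{01}}$ at $1$ compensating for the non-equidimensionality of $\ol{\Lc_1}$ there), check $\da[\A^1](\ol b)=0$ via Leibniz and Lemma \ref{dA1-L0L1} (the square of $\ol{\Lc_1}$ dying under $\Alt$ and $\ol{\Lcu_{01}}|_{t=1}$ being empty), and then invoke Propositions \ref{multequi} and \ref{multhomo} together with $j^*$. This is exactly the paper's argument, including the observation that the introduction of $\Lcu_{01}$ is precisely what makes the two correction terms vanish.
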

\subsubsection{Weight $4$}In weight $4$ the first linear combination
appears. The situation in weight $4$ is given by the following Lemma
 \begin{lem} \label{weight4}Let $W$ be one of the Lyndon words $0001$, $0011$
   or $0111$. There 
   exist cycles $\Lc_{W}$, $\Lcu_{W}$ in $\Nge{X} 1 (4)$ and cycles 
$\ol{\Lc_{W}}$,
   $\ol{\Lcu_{W}}$ in $\Nge{\A^1} 1 (4)$ 
 which  satisfy the following properties 
\begin{enumerate}
\item $\Lc_W=j^*(\ol{\Lc_W})$ and $\Lcu_W=j^*(\ol{\Lcu_W})$
\item $\ol{\Lc_W}$ (resp. $\ol{\Lcu_W}$) has an empty fiber at $0$ (resp. at $1$) 
\item Cycles $\Lc_W$ and $\Lcu_W$ for $W=0001$,  $0011$ and $0111$ satisfy the
  following differential equations derived from the differential equations
  satisfied by $\ol{\Lc_W}$ and $\ol{\Lcu_W}$
\begin{equation}\label{exmL0001:X}
\dN(\Lc_{0001})=\dN(\Lcu_{0001})=\Lc_0 \, \Lc_{001},
\end{equation}
\begin{equation}\label{exmL0011:X}
\dN(\Lc_{0011})=\dN(\Lcu_{0011})=\Lc_0 \, \Lc_{011}+\Lcu_{001}\,
\Lc_1 -\Lc_{01} \, \Lcu_{01}
\end{equation}
and
\begin{equation}\label{exmL0111:X}
\dN(\Lc_{0111})=\dN(\Lcu_{0111})=\Lcu_{011} \, \Lc_{1}.
\end{equation}
\end{enumerate}
\end{lem}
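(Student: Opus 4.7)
The plan is to imitate the strategy already used in weights $2$ and $3$: for each Lyndon word $W \in \{0001,\, 0011,\, 0111\}$, exhibit a cycle $\ol b_W \in \Nge{\A^1}{2}(4)$, built as a combination of products of lower-weight cycles, such that
\begin{enumerate}
\item[(a)] $\ol b_W$ is equidimensional over $\A^1$ with empty fibers at both $0$ and $1$, and
\item[(b)] $\da[\A^1](\ol b_W) = 0$ in $\Nge{\A^1}{3}(4)$.
\end{enumerate}
Once this is done, Proposition \ref{multequi} lets us pull back by $\mu^*$ and by $\nu^*$ to define
\[
\ol{\Lc_W} := \mu^*(\ol b_W), \qquad \ol{\Lcu_W} := \nu^*(\ol b_W),
\]
and Proposition \ref{multhomo} gives
\[
\da[\A^1](\ol{\Lc_W}) = \ol b_W - p_0^*\circ i_0^*(\ol b_W) - \mu^*\circ\da[\A^1](\ol b_W) = \ol b_W,
\]
since both correction terms vanish by (a) and (b); the analogous computation with $\nu^*$ and $i_1$ gives $\da[\A^1](\ol{\Lcu_W}) = \ol b_W$. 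Restricting by $j^* : \Nge{\A^1}{\bullet} \to \Nge{\ps}{\bullet}$ then produces $\Lc_W, \Lcu_W$ and the claimed differential equations in $\cNg{\ps}{\bullet}$.

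For $W = 0001$ and $W = 0111$ this is essentially an inductive extension of the weight-$3$ construction. I would take
\[
\ol b_{0001} = \ol{\Lc_0}\cdot \ol{\Lc_{001}}, \qquad \ol b_{0111} = \ol{\Lcu_{011}}\cdot \ol{\Lc_1}.
\]
Condition (a) follows from the facts established earlier: $\ol{\Lc_{001}}$ has empty fiber at $0$ while $\ol{\Lc_0}$ has empty fiber at $1$ (since $1 \notin \square^1$), and symmetrically $\ol{\Lcu_{011}}$ is empty at $1$ while $\ol{\Lc_1}$ is empty at $0$. Condition (b) follows from Leibniz, Lemma \ref{dA1-L0L1}, and the fact that factors like $\ol{\Lc_0}\cdot \ol{\Lc_0}$ and $\ol{\Lc_1}\cdot \ol{\Lc_1}$ are killed by the $\Alt$ projector; the remaining fiber contributions $\ol{\Lc_{001}}|_{t=0}$ and $\ol{\Lcu_{011}}|_{t=1}$ vanish by hypothesis.

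The main obstacle is the case $W = 0011$, which is genuinely new and mirrors the three-term right-hand side in \eqref{exmL0011:X}. The natural guess is
\[
\ol b_{0011} = \ol{\Lc_0}\cdot \ol{\Lc_{011}} + \ol{\Lcu_{001}}\cdot \ol{\Lc_1} - \ol{\Lc_{01}}\cdot \ol{\Lcu_{01}}.
\]
Equidimensionality and empty fibers at $0$ and $1$ follow factor-by-factor: $\ol{\Lc_0}$ is empty at $1$ and $\ol{\Lc_{011}}$ at $0$; $\ol{\Lcu_{001}}$ is empty at $1$ and $\ol{\Lc_1}$ at $0$; and $\ol{\Lc_{01}}$, $\ol{\Lcu_{01}}$ are respectively empty at $0$ and at $1$. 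For (b), Leibniz and Lemma \ref{dA1-L0L1} give
\begin{align*}
\da[\A^1](\ol{\Lc_0}\cdot \ol{\Lc_{011}}) &= - \ol{\Lc_0}\cdot \ol{\Lcu_{01}}\cdot \ol{\Lc_1}, \\
\da[\A^1](\ol{\Lcu_{001}}\cdot \ol{\Lc_1}) &= \ol{\Lc_0}\cdot \ol{\Lc_{01}}\cdot \ol{\Lc_1}, \\
\da[\A^1](\ol{\Lc_{01}}\cdot \ol{\Lcu_{01}}) &= \ol{\Lc_0}\cdot \ol{\Lc_{01}}\cdot \ol{\Lc_1} - \ol{\Lc_0}\cdot \ol{\Lcu_{01}}\cdot \ol{\Lc_1},
\end{align*}
using graded commutativity together with the fact that $\ol{\Lc_{011}}$, $\ol{\Lcu_{001}}$ vanish on the appropriate fibers. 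The three contributions cancel, giving (b). The delicate point—and the reason the correction $-\ol{\Lc_{01}}\cdot \ol{\Lcu_{01}}$ is needed—is precisely that $\ol{\Lc_{01}}$ and $\ol{\Lcu_{01}}$ have the same $\da[\A^1]$-image $\ol{\Lc_0}\cdot \ol{\Lc_1}$, so their product is a ``weight-$4$ coboundary'' whose differential reconciles the mismatch between $\ol{\Lcu_{01}}$ appearing in $\da[\A^1](\ol{\Lc_{011}})$ and $\ol{\Lc_{01}}$ appearing in $\da[\A^1](\ol{\Lcu_{001}})$. This is exactly the combinatorial shadow of the extra term $\T{01}\cdot\T{01}(1)$ that appeared in \eqref{exm:dT0011}.
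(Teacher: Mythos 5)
Your proposal is correct and follows essentially the same route as the paper: the cases $0001$ and $0111$ are dispatched exactly as in Lemma \ref{lem:cycleLi_k} and the weight-$3$ construction, and for $0011$ you choose the very same element $\ol b=\ol{\Lc_0}\,\ol{\Lc_{011}}+\ol{\Lcu_{001}}\,\ol{\Lc_1}-\ol{\Lc_{01}}\,\ol{\Lcu_{01}}$, verify equidimensionality and empty fibers factor-by-factor, check $\da[\A^1](\ol b)=0$ via Leibniz and Lemma \ref{dA1-L0L1}, and then apply $\mu^*$, $\nu^*$ and Propositions \ref{multequi}--\ref{multhomo}. Your sign bookkeeping in the cancellation agrees with the paper's computation.
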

\begin{proof}
The proof goes as before as the main difficulty is to ``guess'' the
differential equations. The case of $\Lc_{0001}=\on{Li}^{cy}_4$ and
$\Lcu_{0001}$ has already
been treated in Lemma \ref{lem:cycleLi_k} and the remark afterward. The case of
$\Lc_{0111}$ and $\Lcu_{0111}$ is extremely similar to the case of
$\Lc_{011}$. We will only describe the case of $\Lc_{0011}$. Let $\ol b$ be the
element in $\cNg{\A^1}{2}(4)$ defined by:
\[
\ol b=\ol{\Lc_0} \, \ol{\Lc_{011}}+\ol{\Lcu_{001}}\,
\ol{\Lc_1} -\ol{\Lc_{01}} \, \ol{\Lcu_{01}}.
\]
All the cycles involved are equidimensional over $\A^1 \sm \{0,1\}$. As the
products in the above equation always involve a cycle with empty fiber at 
$0$ and one with empty fiber at $1$, the product has empty fiber at $0$ and $1$
and is equidimensional over $\A^1$. 

This shows that  $\ol b$ is equidimensional over $\A^1$ with empty fiber at $0$
and $1$. One computes $\da[\A^1](\ol b)$ using
the Leibniz rule, Lemma \ref{dA1-L0L1} and the previously obtained differential
equations:
\begin{align*}
\da[\A^1](\ol b) = -
\ol{\Lc_0} \, \ol{\Lcu_{01}} \, \ol{\Lc_1}
+\ol{\Lc_0}\, \ol{\Lc_{01}} \, \ol{\Lc_{1}} 
-\ol{\Lc_0}\,\ol{\Lc_{1}}\, \ol{\Lcu_{01}} +
\ol{\Lc_{01}}\, \ol{\Lc_0}\,\ol{\Lc_{1}}=0
\end{align*}
One can thus define
\[
\ol{\Lc_{0011}}=\mu^*(\ol b) 
\qquad \mx{and} \qquad 
\ol{\Lcu_{0011}}=\nu^*(\ol b) 
\]
and conclude with propositions \ref{multhomo} and \ref{multequi}.
\end{proof}
\subsubsection{General statement and a weight $5$ example}
In weight $5$ there are six Lyndon words and the combinatorics of equation
\eqref{ED-T} leads to six cycles with empty fiber at $0$ and
six  cycles with empty fiber at $1$. 
The general statement proved in \cite{SouMPCC} is given below.

\begin{thm}\label{thm:cycleLcLcu}  For any Lyndon word $W$ of length $p$ greater or
  equal to $2$, there exist
  two cycles $\Lc_W$ and $\Lcu_W$ in $\cN (p)$ such that :
\begin{itemize}
\item $\Lc_W$, $\Lcu_W$ are elements in $\Nge{X}{1}(p)$.
\item There exist cycles $\ol{\Lc_W}$, $\ol{\Lcu_W}$ in $\Nge{\A^1} 1 (p)$ such
  that
\[
\Lc_W=j^*(\ol{\Lc_W}) \qquad \mx{and} \qquad \Lcu_W=j^*(\ol{\Lcu_W}). 
\]
\item The restriction of $\ol{\Lc_W}$ (resp. $\ol{\Lcu_W}$) to the fiber $t=0$
  (resp. $t=1$)
  is empty. 
\item The cycle $\Lc_W$  satisfies the equation
\begin{equation}\label{ED-L}
\dN(\Lc_W)=\sum_{U<V} a_{U,V}^W\Lc_U \Lc_V + \sum_{U,V}b_{U,V}^W\Lc_{U}\Lcu_V
\end{equation}
and resp. $\Lcu_W$ satisfies 
\begin{equation}\label{ED-Lc}
\dN(\Lcu_W)=\sum_{0<U<V} \ap_{U,V}^W\Lcu_U \Lcu_V +
\sum_{U,V}\bp_{U,V}^W\Lc_{U}\Lcu_V+
\sum_{V}\ap_{0,V}\Lc_0\Lc_V 
\end{equation}
 and the same holds for their extensions
  $\ol{\Lc_W}$ and $\ol{\Lcu_W}$ to $\Nge{\A^1}{1}$. In the above
  equations  $U$ and $V$ are Lyndon words of smaller length than
  $W$ and the coefficients $a_{U,V}^W$, $b_{U,V}^W$, $\ap_{U,V}^w$ and
  $\bp_{U,V}^W$ are integers derived from equation \eqref{ED-T}.
\end{itemize}  
\end{thm}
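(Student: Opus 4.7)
The plan is to argue by induction on the length $p$ of the Lyndon word $W$, mirroring the combinatorial construction of $\T W$ in $\Ttr$ and its behaviour under $\dc$. The base case $p=2$ is $W=01$, which is Totaro's cycle $\Lc_{01}$ built in Subsection~\ref{subsec:weight12Lin} together with its twin $\Lcu_{01}$; these satisfy the listed properties. For the inductive step, assume that $\Lc_U$, $\Lcu_U$, $\ol{\Lc_U}$, $\ol{\Lcu_U}$ have been constructed for every Lyndon word $U$ of length $<p$ with the four bulleted properties (equidimensionality, empty fibre at $0$ resp.\ $1$, differential equation, extension to $\A^1$).

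First, I would define a candidate ``pre-boundary'' $\ol b_W \in \Nge{\A^1}{2}(p)$ by translating the right-hand side of the combinatorial equation \eqref{ED-T}: to each monomial $\T U\cdot \T V$ appearing in $\dc(\T W)$ I attach a product $\ol{\Lc_U}\cdot \ol{\Lcu_V}$ (or $\ol{\Lcu_U}\cdot \ol{\Lc_V}$, with appropriate sign), and to each monomial of type $\T U\cdot \T V(1)$ I attach a product $\ol{\Lc_U}\cdot \ol{\Lcu_V}$ in which $\Lcu_V$ plays the role of the ``constant at $1$'' avatar demanded by the combinatorics. The guiding principle is that every summand of $\ol b_W$ must be a mixed product in which one factor has empty fibre at $0$ and the other has empty fibre at $1$; the induction supplies both flavours for every Lyndon letter (with $\ol{\Lc_0}$ and $\ol{\Lc_1}$ themselves already playing these two roles). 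Consequently, every summand, and hence $\ol b_W$ itself, is equidimensional over $\A^1$ and has empty fibre at both $0$ and $1$, by stability of $\Nge{\A^1}{\bullet}$ under products and by Proposition~\ref{equiopen}.

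Next, I would verify the crucial identity $\da[\A^1](\ol b_W)=0$. Applying the Leibniz rule to each summand and using the inductive differential equations \eqref{ED-L}--\eqref{ED-Lc} together with Lemma~\ref{dA1-L0L1} (which handles the boundary contributions of $\ol{\Lc_0}$ and $\ol{\Lc_1}$), one obtains a linear combination of triple products of the $\ol{\Lc_{\bullet}}$'s and $\ol{\Lcu_{\bullet}}$'s. The cancellation of this sum is precisely the cycle-theoretic image of the identity $\dc^{2}(\T W)=0$ in $\Fq$. This identity itself decomposes, following the proof scheme described after Theorem~2.7, into the Jacobi-type cancellation $d_{Lie}^{2}=0$ governing the $\ol{\Lc_U}\cdot \ol{\Lc_V}\cdot\ol{\Lc_X}$ terms, the inductive vanishing of contributions from internal edges, and the regrouping of $\T V(1)$-type terms coming from $1$-decorated leaves. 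Matching coefficients on both sides gives the coefficients $a_{U,V}^W$, $b_{U,V}^W$, $\ap_{U,V}^W$, $\bp_{U,V}^W$ appearing in \eqref{ED-L}--\eqref{ED-Lc}.

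Finally, I set
\[
\ol{\Lc_W}=\mu^*(\ol b_W)\qquad\text{and}\qquad \ol{\Lcu_W}=\nu^*(\ol b_W),
\]
and define $\Lc_W=j^*(\ol{\Lc_W})$, $\Lcu_W=j^*(\ol{\Lcu_W})$. By Proposition~\ref{multequi} these land in $\Nge{\A^1}{1}(p)$ with empty fibre at $0$ (resp.\ $1$). Applying Proposition~\ref{multhomo} gives
\[
\da[\A^1](\ol{\Lc_W})=\ol b_W - p_0^*\circ i_0^*(\ol b_W)-\mu^*\bigl(\da[\A^1](\ol b_W)\bigr),
\]
and both correction terms vanish (the first because $\ol b_W$ has empty fibre at $0$, the second by the previous step), so $\da[\A^1](\ol{\Lc_W})=\ol b_W$. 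The analogous equation holds for $\ol{\Lcu_W}$ using $\nu^*$ and the empty fibre at $1$. Restricting by $j^*$ yields the differential equations \eqref{ED-L} and \eqref{ED-Lc}, completing the induction.

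The main obstacle is the second step: carrying out the verification that $\da[\A^1](\ol b_W)=0$ with the combinatorially prescribed coefficients. This is not a formal consequence of $\dc^{2}(\T W)=0$ in $\Fq$, because the cycles $\ol{\Lc_U}$ and $\ol{\Lcu_U}$ are genuinely distinct representatives (even though they are cohomologous), and the cycle product is not strictly commutative. One must therefore verify by hand that the bookkeeping used to turn $\T V(1)$-terms into products involving $\ol{\Lcu_V}$ is compatible with Leibniz and with the inductive differential equations, so that the degree-three sum truly collapses in $\Nge{\A^1}{3}(p)$ rather than only up to boundaries. The low-weight calculations of Lemma~\ref{weight4} and Example~\ref{exm:dT} can be used as a model to calibrate the signs and the precise dictionary between the $\alpha,\beta$ coefficients of \eqref{ED-T} and the $a,b,\ap,\bp$ coefficients of \eqref{ED-L}--\eqref{ED-Lc}.
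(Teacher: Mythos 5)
Your overall strategy (induction on the length of $W$, building a pre-boundary from the combinatorics of $\dc(\T W)$, checking equidimensionality and emptiness of fibres, proving $\da[\A^1]$ of the pre-boundary vanishes via the cycle-level shadow of $\dc^2(\T W)=0$, and then applying Proposition~\ref{multhomo} to pull back by the multiplication) is exactly the scheme the paper sketches in Remark~\ref{rem:thm}, and like the paper you correctly isolate the vanishing $\da[\A^1](\ol b_W)=0$ as the genuinely hard step that is deferred to the combinatorial preparation. However, there is a concrete structural error: you use a \emph{single} linear combination $\ol b_W$ and set $\ol{\Lc_W}=\mu^*(\ol b_W)$ \emph{and} $\ol{\Lcu_W}=\nu^*(\ol b_W)$. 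The paper instead uses two \emph{distinct} combinations, $\ol{A_W}$ (with empty fibre at $0$, fed to $\mu^*$) and $\ol{A_W^1}$ (with empty fibre at $1$, fed to $\nu^*$), as in Equation~\eqref{eqdef:LW}. This is not a cosmetic difference: the two differential equations \eqref{ED-L} and \eqref{ED-Lc} in the statement you are proving have genuinely different right-hand sides, so they cannot both be the boundary of pullbacks of one and the same $\ol b_W$. Your claim that every summand of $\ol b_W$ can be arranged as a mixed product with one factor empty at $0$ and the other empty at $1$ is incompatible with \eqref{ED-L}, which contains terms $a_{U,V}^W\,\Lc_U\Lc_V$ where both factors extend with empty fibre at $0$ but nonempty fibre at $1$.

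The failure first occurs in weight $5$ and is spelled out in Example~\ref{exmL01011}: the combination
\[
\ol b= -\ol{\Lc_{01}}\, \ol{ \Lc_{011}}-\ol{\Lc_{1}}\, \ol{\Lcu_{0011}}
-2\ol{\Lc_{011}}\,\ol{\Lcu_{01}}
\]
used to define $\ol{\Lc_{01011}}=\mu^*(\ol b)$ has empty fibre at $0$ but \emph{nonempty} fibre at $1$, namely $-\ol{\Lc_{01}}|_{t=1}\cdot\ol{\Lc_{011}}|_{t=1}$, so Proposition~\ref{multhomo} gives $\da[\A^1](\nu^*(\ol b))=\ol b + p^*\circ i_1^*(\ol b)\neq \ol b$ and the homotopy argument breaks for $\nu^*$. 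One is forced to introduce the separate combination $\Lcu_{01}\Lcu_{011}-\Lc_{011}\Lcu_{01}-\Lc_{01}\Lcu_{011}-\Lc_{1}\Lcu_{0011}$, with empty fibre at $1$, to define $\ol{\Lcu_{01011}}$. Your proposal works by accident through weight $4$ (where the two equations coincide) but does not produce the asserted cycles in general. To repair it you must run the induction with \emph{two} families of pre-boundaries in parallel, one adapted to $\mu^*$ and one to $\nu^*$, and prove the vanishing of $\da[\A^1]$ separately for each; this doubles the combinatorial bookkeeping you already flag as the main obstacle.
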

\begin{rem}\label{rem:thm}Without giving a proof which works by induction on the
  length of $W$, the author would like to stress that the
  construction of the cycles $\ol{\Lc_W}$ (resp. $\ol{\Lcu_W}$) relies on a
  geometric argument that has already been described and used here: the pull-back by
  the (twisted) multiplication $\mu^*$ (resp. $\nu^*$) gives a homotopy between the
  identity and $p^* \circ i_0^*$ (resp. $p^* \circ i_1^*$).  Thus, defining
\[
\ol{A_W}=\sum_{U<V} a_{U,V}^W\ol{\Lc_U}\ol{ \Lc_V} + \sum_{U,V}b_{U,V}^W
\ol{\Lc_{U}}\ol{\Lcu_V}
\]
and 
\[
\ol{A_W^1}=\sum_{0<U<V} \ap_{U,V}^W\ol{\Lcu_U}\ol{ \Lcu_V} +
\sum_{U,V}\bp_{U,V}^W\ol{\Lc_{U}}\ol{\Lcu_V}+
\sum_{V}\ap_{0,V}\ol{\Lc_0}\ol{\Lc_V},
\]
the cycle $\ol{\Lc_W}$ and $\ol{\Lcu_{W}}$ are defined by 
\begin{equation}\label{eqdef:LW}
\ol{\Lc_W}=\mu^*(\ol{A_W}) 
\qquad \mx{and} \qquad 
\ol{\Lcu_{W}}=\nu^*(\ol{A_W^1}).
\end{equation}
The fact that $\ol{A_W}$ (resp. $\ol{A_{W}^1}$) is equidimensional over $\A^1$
with empty fiber at $0$ (resp. $1$) is essentially a consequence of the
induction. The main problem is to show that
$\da[\A^1](\ol{A_W})=\da[\A^1](\ol{A_W^1})=0$ which in \cite{SouMPCC} is deduced
after a long preliminary work from the combinatorial situation given by the
trees $\T{W}$. 
\end{rem}

In weight $5$ appears the need of two distinct differential equations and the
first example with coefficient different from $\pm 1$.
%%%%% 1 ere relecture OK ci dessous 
\begin{exm}\label{exmL01011} The two cycles associated to the Lyndon word
  $01011$ satisfy 
\begin{align}
 \dN(\Lc_{01011})=&-\Lc_{01}\, \Lc_{011}- \Lc_{1}\, \Lcu_{0011}- 2\Lc_{011}\,\Lcu_{01} \\
\dN(\Lcu_{01011})=&\Lcu_{01}\, \Lcu_{011}-\Lc_{011}\, \Lcu_{01}
 -\Lc_{01}\, \Lcu_{011} -\Lc_{1}\, \Lcu_{0011}.
\end{align}
The factor $2$ in the last term of $\dN(\Lc_{01011})$ is related to the factor
$2$ appearing in $\dc(\T{01011})$ presented in Equation \eqref{exm:dT01011}. %
The term 
\[2 \Lc_{011}\Lc_0\Lc_1
\]
which is equal to $\dN(-2\Lc_{011}\Lcu_{01})$ cancels with one term in
$-\Lc_0\Lc_1\Lc_{011}$ coming from $\dN(-\Lc_{01}\cdot \Lc_{011})$ and one term
in $\Lc_1\Lc_0\Lc_{011}$ coming from $\dN(\Lc_{1}\cdot \Lc_{0011})$. The
whole computation can in fact be done over $\A^1$ and $\ol{\Lc_{01011}}$ is
defined as previously as the pull-back by $\mu^*$ of 
\[
\ol b= -\ol{\Lc_{01}}\, \ol{ \Lc_{011}}-\ol{\Lc_{1}}\, \ol{\Lcu_{0011}}
-2\ol{\Lc_{011}}\,\ol{\Lcu_{01}}.
\]
The cycle $\Lc_{01011}$ is then its restriction to $X$. The above linear
combination has an empty fiber at $0$ (which allows the use of $\mu^*$). However
its fiber at $1$ is nonempty and given by 
\[
-\ol{\Lc_{01}}|_{t=1}\, \ol{ \Lc_{011}}|_{t=1}
\]
and its  pull-back by the twisted multiplication $\nu^*$ satisfies 
\[
\da[\A^1](\nu^*(\ol b))=\ol b + p^*\circ i_1^*(\ol b)\neq \ol b.
\]
That is why we have  introduced the linear combination 
\[
\Lcu_{01}\, \Lcu_{011}-\Lc_{011}\, \Lcu_{01}
 -\Lc_{01}\, \Lcu_{011} -\Lc_{1}\, \Lcu_{0011}
\]
whose extension to $\A^1$ has empty fiber at $1$ (but not at $0$). This allows
us to define 
\[
\ol{\Lcu_{01011}}=\nu^*\left(
\Lcu_{01}\, \Lcu_{011}-\Lc_{011}\, \Lcu_{01}
 -\Lc_{01}\, \Lcu_{011} -\Lc_{1}\, \Lcu_{0011}
\right).
\]
\end{exm} 
\section[Tree with colored edges]{Parametric and combinatorial representation for the cycles: trees with colored
  edges}
\newcommand{\Tc}{\mf T}
\newcommand{\Tcu}{\Tc^{1}}
One can give a combinatorial approach to describe cycles $\Lc_W$
and $\Lcu_W$ as parametrized cycles using trivalent trees with two types of
edge.

\begin{defn}Let $\Tdr$ be the $\Q$ vector space spanned by rooted trivalent trees such
  that
\begin{itemize}
\item the edges can be of two types: 
 $\begin{tikzpicture}[%
baseline={([yshift=-0.5ex]current bounding box.center)},scale=1]
\node(root) {}
[level distance=1.5em,sibling distance=3ex]
child {node{}};
\end{tikzpicture}$ or $\begin{tikzpicture}[%
baseline={([yshift=-0.5ex]current bounding box.center)},scale=1]
\node(root) {}
[level distance=1.5em,sibling distance=3ex]
child {node{}
edge from parent [Reda]};
\end{tikzpicture}$;
\item the root vertex is decorated by $t$ ;
\item other external vertices are decorated by $0$ or $1$.
\end{itemize}
We say that such a tree is a rooted colored tree or simply a colored tree.
\end{defn}

We define two bilinear maps $\Tdr\otimes \Tdr \lra \Tdr$ as follows on the
colored trees:
\begin{itemize}
\item Let $T_1 \prac T_2$ be the colored tree given by joining the two roots of
  $T_1$ and $T_2$ and adding a new root and a new edge of type  $\begin{tikzpicture}[%
baseline={([yshift=-0.5ex]current bounding box.center)},scale=1]
\node(root) {}
[level distance=1.5em,sibling distance=3ex]
child {node{}};
\end{tikzpicture}$ :
\[
T_1 \prac T_2 =
\begin{tikzpicture}[%
baseline={(current bounding box.center)}]
\tikzstyle{every child node}=[mathscript mode,minimum size=0pt, inner sep=0pt]
\node[roots](root) 
{}
[level distance=1.2em,sibling distance=3ex]
child {node[fill, circle, minimum size=2pt,inner sep=0pt]{}[level distance=1.5em]
  child{ node[leaf]{T_1}edge from parent [Nd]}
  child{ node[leaf]{T_2}edge from parent [Nd]}
edge from parent [N]};
\fill (root.center) circle (1/1*\lbullet) ;
\node[mathsc, xshift=-1ex] at (root.west) {t};
\end{tikzpicture}
\]
where the dotted edges denote either type of edges.
\item  Let $T_1 \pracd T_2$ be the colored tree given by joining the two roots of
  $T_1$ and $T_2$ and adding a new root and a new edge of type  $\begin{tikzpicture}[%
baseline={([yshift=-0.5ex]current bounding box.center)},scale=1]
\node(root) {}
[level distance=1.5em,sibling distance=3ex]
child {node{}edge from parent [Reda]};
\end{tikzpicture}$ :
\[
T_1 \pracd T_2 =% \rootaad{T_1}{T_2}
\begin{tikzpicture}[%
baseline={(current bounding box.center)}]
\tikzstyle{every child node}=[mathscript mode,minimum size=0pt, inner sep=0pt]
\node[roots](root) 
{}
[level distance=1.2em,sibling distance=3ex]
child {node[fill, circle, minimum size=2pt,inner sep=0pt]{}[level distance=1.5em]
  child{ node[leaf]{T_1}edge from parent [Nd]}
  child{ node[leaf]{T_2}edge from parent [Nd]}
edge from parent [Reda]};
\fill (root.center) circle (1/1*\lbullet) ;
\node[mathsc, xshift=-1ex] at (root.west) {t};
\end{tikzpicture}
\]
where the dotted edges denote either type of edges.
\end{itemize}

\begin{defn} Let $\Tc_0$ and $\Tc_1$ be the colored trees defined by
\[\Tc_0=
\begin{tikzpicture}[baseline=(current bounding box.center)]
\tikzstyle{every child node}=[intvertex]
\node[roots](root) {}
[deftree]
child {node(1){} 
edge from parent [Reda]}
;%%%
\fill (root.center) circle (1/1*\lbullet) ;
\node[mathsc, xshift=-1ex] at (root.west) {t};
\node[labf] at (1.south){0};
\end{tikzpicture} %
\quad \mx{ and } \quad 
\Tc_1=
\begin{tikzpicture}[baseline=(current bounding box.center)]
\tikzstyle{every child node}=[intvertex]
\node[roots](root) {}
[deftree]
child {node(1){} 
}
;%%%
\fill (root.center) circle (1/1*\lbullet) ;
\node[mathsc, xshift=-1ex] at (root.west) {t};
\node[labf] at (1.south){1};
\end{tikzpicture}. %
\]

For any Lyndon word $W$ of length greater or equal to $2$, let $\Tc_W$
(resp. $\Tcu_W$) be the
linear combination of colored trees given by 
\[
\Tc_W=\sum_{U<V} a_{U,V}^W\Tc_U \prac \Tc_V + \sum_{U,V}b_{U,V}^W\Tc_{U}\prac \Tcu_V, 
\]
and respectively by
\[
\Tcu_W=\sum_{0<U<V} \ap_{U,V}^W\Tcu_U \pracd \Tcu_V +
\sum_{U,V}\bp_{U,V}^W\Tc_{U}\pracd \Tcu_V+
\sum_{V}\ap_{0,V}\Tc_0\pracd \Tc_V .
\]  
where the coefficients appearing are the ones from Theorem \ref{thm:cycleLcLcu}.
\end{defn}

To a colored tree $T$ with $p$ external leaves and a root, one associates a
function $f_T : X \times {(\p^1)}^{p -1} \lra X \times (\p^1)^{2p-1}$ as follows :
\begin{itemize}
\item Endow $T$ with its natural order as trivalent tree.
\item This induces a numbering of the edges of $T$ : $(e_1, e_2, \ldots ,
  e_{2p-1})$.
\item The edges being oriented away from the root, the numbering of the edges
  induces  a numbering of the
  vertices $(v_1, v_2, \ldots , v_{2p})$ such that the root is $v_1$.
\item Associate variables $x_1, \ldots, x_{p-1}$ to each internal vertices such
  that  the numbering of the variables is opposite to the order induced by the
  numbering of the vertices (first internal vertex has variable $x_{p-1}$, second
  internal vertex has variable $x_{p-2}$ and so on).
\item For each edge 
$e_i = \begin{tikzpicture}[baseline={([yshift=-1ex]current bounding box.center)}]
\tikzstyle{every child node}=[intvertex]
\node[intvertex](root) {}
[deftree]
child {node(1){} 
edge from parent [Nd]}
;%%%
%\fill (root.center) circle (1/1*\lbullet) ;
\node[mathsc, xshift=-1ex] at (root.west) {a};
\node[labf] at (1.south){b};
\end{tikzpicture}$ oriented from $a$ to $b$,  define a function 
\[
f_i(a,b)=\left\{
\begin{array}{ll}
\ds 1-\frac{a}{b}& \mx{if }e_i \mx{ is of type } 
\begin{tikzpicture}[%
baseline={([yshift=-0.5ex]current bounding box.center)},scale=1]
\node(root) {}
[level distance=1.5em,sibling distance=3ex]
child {node{}};
\end{tikzpicture},\\[1em]
\ds \frac{b-a}{b-1}& \mx{if }e_i \mx{ is of type }
\begin{tikzpicture}[%
baseline={([yshift=-0.5ex]current bounding box.center)},scale=1]
\node(root) {}
[level distance=1.5em,sibling distance=3ex]
child {node{}
edge from parent [Reda]};
\end{tikzpicture}. \\[1em]
\end{array}
\right.
\]
\item Finally $f_T : X\times (\p^1)^{p-1}  \lra X\times(\p^1)^{2p-1}$ is defined by 
\[
f_T(t, x_1, \ldots x_{p-1})=(t, f_1, \ldots, f_{2p-1}).
\]
\end{itemize}
Let $\Gamma(T)$ be the intersection of the image of $f_T$ with 
$X \times \square^{2p-1}$. 
One extends the definition of
$\Gamma$ to $\Tdr$ by linearity and thus obtains a \emph{twisted forest cycling
map} similar to the one defined by Gangl, Goncharov and Levin in \cite{GanGonLev05}. 

 The map $\Gamma$ satisfies:
\begin{itemize}
\item $\Alt(\Gamma(\Tc_0))=\Lc_0$ and $\Alt(\Gamma(\Tc_1))=\Lc_1$.
\item For any Lyndon word of length $p\geqs 2$, 
\[
\Alt(\Gamma(\Tc_W))=\Lc_W \qquad \mx{and} \qquad
\Alt(\Gamma(\Tcu_W))=\Lcu_W.
\]
\end{itemize}

 The fact that $\Gamma(\Tc_0)$ (resp. $\Gamma(\Tc_1)$) is the graph
  of $t \mapsto t$ (resp. $t \mapsto 1-t$) follows from the definition. Thus one
  already has  $\Gamma(\Tc_0)$ (resp. $\Gamma(\Tc_1)$) in $\Ze[1](X,1)$ and 
\[
\Alt(\Gamma(\Tc_0))=\Lc_0 \qquad \mx{and} \qquad
\Alt(\Gamma(\Tc_1))=\Lc_1.
\]

Then the above property is deduced by induction. Recall that the defining
equation \eqref{eqdef:LW} for the cycle $\ol{\Lc_W}$ is 
\[
\ol{\Lc_W}=\mu^*\left(
\sum_{U<V} a_{U,V}^W\ol{\Lc_U}\ol{ \Lc_V} + \sum_{U,V}b_{U,V}^W
\ol{\Lc_{U}}\ol{\Lcu_V}
\right).
\]
As already remarked in
Example  \ref{L01:para}, in order to compute the pull-back by $\mu^*$ one sets
the former parameter $t$ to a new variable $x_n$ and parametrizes 
the new $\square^1$ factor arriving in first position by $1 - \frac{t}{x_n}$; 
$t$ is again the  parameter over $X$ or $\A^1$ depending if one considers 
cycles over $\A^1$ or their restriction to $X=\ps$. Thus the expression of $\Lc_W$,
restriction of $\ol{\Lc_W}$ to $X$ is exactly given by 
\[
\Lc_W=\Alt(\Gamma(\Tc_W)).
\] 
The case of $\nu^*$ is similar but parametrizing the new
$\square^1$ factor by $\frac{x_n-t}{x_n-1}$.

For the previously built examples, we give below the corresponding colored trees
and expressions as parametrized 
cycles (omitting the projector $\Alt$). We also recall the corresponding
differential equations as given by Theorem \ref{thm:cycleLcLcu}.

 \newcommand{\locscale}{1}
 \begin{exm}[Weight $1$]
\[
\Tc_0=
\begin{tikzpicture}[baseline=(current bounding box.center)]
\tikzstyle{every child node}=[intvertex]
\node[roots](root) {}
[deftree]
child {node(1){} 
edge from parent [Reda]}
;%%%
\fill (root.center) circle (1/1*\lbullet) ;
\node[mathsc, xshift=-1ex] at (root.west) {t};
\node[labf] at (1.south){0};
\end{tikzpicture} %
,\quad 
\Tc_1=
\begin{tikzpicture}[baseline=(current bounding box.center)]
\tikzstyle{every child node}=[intvertex]
\node[roots](root) {}
[deftree]
child {node(1){} 
}
;%%%
\fill (root.center) circle (1/1*\lbullet) ;
\node[mathsc, xshift=-1ex] at (root.west) {t};
\node[labf] at (1.south){1};
\end{tikzpicture}\quad \mx{and }\dN(\Lc_0)=\dN(\Lc_1)=0.
\]
We recall below how cycles $\Lc_0$ and $\Lc_1$ are expressed in terms of
parametrized cycles:
\[
\Lc_0=[t;t] \quad \subset X \times \square^1
\qquad \mx{and} \qquad
\Lc_1=[t;1-t] \quad \subset X \times \square^1.
\]
\end{exm}
\begin{exm}[Weight $2$]
\[
\Tc_{01}=
\begin{tikzpicture}[baseline=(current bounding box.center)]
\tikzstyle{every child node}=[intvertex]
\node[roots](root) {}
[deftree]
child {node{}  
   child {node(1){}  
    edge from parent [Reda]}
   child {node(2){} 
    edge from parent [N]}
edge from parent [N]}
;
%%%
\fill (root.center) circle (1/1*\lbullet) ;
\node[mathsc, xshift=-1ex] at (root.west) {t};
\node[labf] at (1.south){0};
\node[labf] at (2.south){1};
\end{tikzpicture} %
,\quad 
\Tcu_{01}=
\begin{tikzpicture}[baseline=(current bounding box.center)]
\tikzstyle{every child node}=[intvertex]
\node[roots](root) {}
[deftree]
child {node{}  
   child {node(1){}  
    edge from parent [Reda]}
   child {node(2){} 
    edge from parent [N]}
edge from parent [Reda]}
;
%%%
\fill (root.center) circle (1/1*\lbullet) ;
\node[mathsc, xshift=-1ex] at (root.west) {t};
\node[labf] at (1.south){0};
\node[labf] at (2.south){1};
\end{tikzpicture} %
\quad \mx{and }\dN(\Lc_{01})=\dN(\Lcu_{01})=\Lc_0\Lc_1
\]
We have seen in Equation \eqref{L01:para} that cycles $\Lc_{01}$ and $\Lcu_{01}$
are given (in $X \times \square^3$) by
\[
\Lc_{01} =[t; 1-\frac{t}{x_1}, x_1, 1-x_1]
 \qquad \mx{and} \qquad 
 \Lcu_{01} =[t; \frac{x_1-t}{x_1-1}, x_1, 1-x_1].
\]
\end{exm}
\begin{exm}[Weight $3$]
\[
\dN(\Lc_{001})=\dN(\Lcu_{001})=\Lc_0\Lc_{01},
\quad
\dN(\Lc_{011})=\dN(\Lcu_{011})=-\Lc_1\Lcu_{01}. 
\]
\[
\Tc_{001}=
\begin{tikzpicture}[baseline=(current bounding box.center)]
\tikzstyle{every child node}=[intvertex]
\node[roots](root) {}
[deftree]
child{node{}
  child{node(1){}
    edge from parent [Reda]}
  child {node{}  
     child {node(2){}  
      edge from parent [Reda]}
     child {node(3){} 
      edge from parent [N]}
  edge from parent [N]}
edge from parent [N]}
;
%%%
\fill (root.center) circle (1/1*\lbullet) ;
\node[mathsc, xshift=-1ex] at (root.west) {t};
\node[labf] at (1.south){0};
\node[labf] at (2.south){0};
\node[labf] at (3.south){1};
\end{tikzpicture} %
,\quad 
\Tcu_{001}=
\begin{tikzpicture}[baseline=(current bounding box.center)]
\tikzstyle{every child node}=[intvertex]
\node[roots](root) {}
[deftree]
child{node{}
  child{node(1){}
    edge from parent [Reda]}
  child {node{}  
     child {node(2){}  
      edge from parent [Reda]}
     child {node(3){} 
      edge from parent [N]}
  edge from parent [N]}
edge from parent [Reda]}
;
%%%
\fill (root.center) circle (1/1*\lbullet) ;
\node[mathsc, xshift=-1ex] at (root.west) {t};
\node[labf] at (1.south){0};
\node[labf] at (2.south){0};
\node[labf] at (3.south){1};
\end{tikzpicture} %
,\quad 
\Tc_{011}=-
\begin{tikzpicture}[baseline=(current bounding box.center)]
\tikzstyle{every child node}=[intvertex]
\node[roots](root) {}
[deftree]
child{node{}
  child{node(1){}
    edge from parent [N]}
  child {node{}  
     child {node(2){}  
      edge from parent [Reda]}
     child {node(3){} 
      edge from parent [N]}
  edge from parent [Reda]}
edge from parent [N]
}
;
%%%
\fill (root.center) circle (1/\locscale*\lbullet) ;
\node[mathsc, xshift=-1ex] at (root.west) {t};
\node[labf] at (2.south){0};
\node[labf] at (3.south){1};
\node[labf] at (1.south){1};
\end{tikzpicture}
,\quad 
\Tcu_{011}=-
\begin{tikzpicture}[baseline=(current bounding box.center)]
\tikzstyle{every child node}=[intvertex]
\node[roots](root) {}
[deftree]
child{node{}
  child{node(1){}
    edge from parent [N]}
  child {node{}  
     child {node(2){}  
      edge from parent [Reda]}
     child {node(3){} 
      edge from parent [N]}
  edge from parent [Reda]}
edge from parent [Reda]
}
;
%%%
\fill (root.center) circle (1/\locscale*\lbullet) ;
\node[mathsc, xshift=-1ex] at (root.west) {t};
\node[labf] at (2.south){0};
\node[labf] at (3.south){1};
\node[labf] at (1.south){1};
\end{tikzpicture}
\]
The corresponding expression as parametrized cycles are given below (following
our ``twisted forest cycling map''):
\begin{multline*}
\Lc_{001}=[t;1-\frac{t}{x_2}, x_2, 1-\frac{x_2}{x_1}, x_1, 1-x_1]
\quad \subset X \times \square^5, \\
%\qquad \mx{and} \qquad \\
\Lcu_{001}=[t;\frac{x_2-t}{x_2-1}, x_2, 1-\frac{x_2}{x_1}, x_1, 1-x_1]
\quad \subset X \times \square^5
\end{multline*}
and
\begin{multline*}
\Lc_{011}=-[t;1-\frac{t}{x_2}, 1-x_2,\frac{x_1-x_2}{x_1-1}, x_1, 1-x_1]
\quad \subset X \times \square^5, \\
%\qquad \mx{and} \qquad
\Lcu_{011}=-[t;\frac{x_2-t}{x_2-1},1- x_2,\frac{x_1-x_2}{x_1-1}, x_1, 1-x_1]
\quad \subset X \times \square^5
\end{multline*}
\end{exm}
\begin{exm}[Weight $4$] The differential equations satisfied by the weight $4$
  cycles are:
\begin{align*}
\dN(\Lc_{0001})=\dN(\Lcu_{0001})=&\Lc_0\Lc_{001}
\\
\dN(\Lc_{0011})=\dN(\Lcu_{0011})=&\Lc_0\Lc_{011}-\Lc_1\Lcu_{001}-\Lc_{01}\Lcu_{01}
 \\
\dN(\Lc_{0111})=\dN(\Lcu_{0111})=&-\Lc_1\Lcu_{011}
\end{align*}
The corresponding colored trees are given by:
\[
\Tc_{0001}=
\begin{tikzpicture}[baseline=(current bounding box.center)]
\tikzstyle{every child node}=[intvertex]
\node[roots](root) {}
[deftree]
child{node{}
  child{node(1){}
    edge from parent [Reda]}
  child{node{}
    child{node(2){}
      edge from parent [Reda]}
    child {node{}  
       child {node(3){}  
        edge from parent [Reda]}
       child {node(4){} 
        edge from parent [N]}
    edge from parent [N]}
  edge from parent [N]}
edge from parent [N]
}
;
%%%
\fill (root.center) circle (1/\locscale*\lbullet) ;
\node[mathsc, xshift=-1ex] at (root.west) {t};
\node[labf] at (1.south){0};
\node[labf] at (2.south){0};
\node[labf] at (3.south){0};
\node[labf] at (4.south){1};
\end{tikzpicture}
,\quad 
\Tcu_{0001}=
\begin{tikzpicture}[baseline=(current bounding box.center)]
\tikzstyle{every child node}=[intvertex]
\node[roots](root) {}
[deftree]
child{node{}
  child{node(1){}
    edge from parent [Reda]}
  child{node{}
    child{node(2){}
      edge from parent [Reda]}
    child {node{}  
       child {node(3){}  
        edge from parent [Reda]}
       child {node(4){} 
        edge from parent [N]}
    edge from parent [N]}
  edge from parent [N]}
edge from parent [Reda]
}
;
%%%
\fill (root.center) circle (1/\locscale*\lbullet) ;
\node[mathsc, xshift=-1ex] at (root.west) {t};
\node[labf] at (1.south){0};
\node[labf] at (2.south){0};
\node[labf] at (3.south){0};
\node[labf] at (4.south){1};
\end{tikzpicture}
,\quad 
\Tc_{0111}=
\begin{tikzpicture}[baseline=(current bounding box.center)]
\tikzstyle{every child node}=[intvertex]
\node[roots](root) {}
[deftree]
child{node{}
  child{node(1){}
    edge from parent [N]}
  child{node{}
    child{node(2){}
      edge from parent [N]}
    child {node{}  
       child {node(3){}  
        edge from parent [Reda]}
       child {node(4){} 
        edge from parent [N]}
    edge from parent [Reda]}
  edge from parent [Reda]
  } 
edge from parent [N]
}
;
%%%
\fill (root.center) circle (1/\locscale*\lbullet) ;
\node[mathsc, xshift=-1ex] at (root.west) {t};
\node[labf] at (3.south){0};
\node[labf] at (4.south){1};
\node[labf] at (2.south){1};
\node[labf] at (1.south){1};
\end{tikzpicture}
,\quad
\Tcu_{0111}=
\begin{tikzpicture}[baseline=(current bounding box.center)]
\tikzstyle{every child node}=[intvertex]
\node[roots](root) {}
[deftree]
child{node{}
  child{node(1){}
    edge from parent [N]}
  child{node{}
    child{node(2){}
      edge from parent [N]}
    child {node{}  
       child {node(3){}  
        edge from parent [Reda]}
       child {node(4){} 
        edge from parent [N]}
    edge from parent [Reda]}
  edge from parent [Reda]
  } 
edge from parent [Reda]
}
;
%%%
\fill (root.center) circle (1/\locscale*\lbullet) ;
\node[mathsc, xshift=-1ex] at (root.west) {t};
\node[labf] at (3.south){0};
\node[labf] at (4.south){1};
\node[labf] at (2.south){1};
\node[labf] at (1.south){1};
\end{tikzpicture},
\]
\[
\Tc_{0011}=-
\begin{tikzpicture}[baseline=(current bounding box.center)]
\tikzstyle{every child node}=[intvertex]
\node[roots](root) {}
[deftree]
child{node{}
  child{node(1){}
    edge from parent [Reda]}
  child{node{}
    child{node(2){}
      edge from parent [N]}
    child {node{}  
       child {node(3){}  
        edge from parent [Reda]}
       child {node(4){} 
        edge from parent [N]}
    edge from parent [Reda]}
  edge from parent [N]
  }
edge from parent [N]
}
;
%%%
\fill (root.center) circle (1/\locscale*\lbullet) ;
\node[mathsc, xshift=-1ex] at (root.west) {t};
\node[labf] at (1.south){0};
\node[labf] at (3.south){0};
\node[labf] at (4.south){1};
\node[labf] at (2.south){1};
\end{tikzpicture}
-
\begin{tikzpicture}[baseline=(current bounding box.center)]
\tikzstyle{every child node}=[intvertex]
\node[roots](root) {}
[deftree]
child{node{}
  child{node(4){}
    edge from parent [N]}
  child{node{}
    child{node(1){}
      edge from parent [Reda]}
    child {node{}  
       child {node(2){}  
        edge from parent [Reda]}
       child {node(3){} 
        edge from parent [N]}
    edge from parent [N]}
  edge from parent [Reda]
  }
edge from parent [N]
}
;
%%%
\fill (root.center) circle (1/\locscale*\lbullet) ;
\node[mathsc, xshift=-1ex] at (root.west) {t};
\node[labf] at (1.south){0};
\node[labf] at (2.south){0};
\node[labf] at (3.south){1};
\node[labf] at (4.south){1};
\end{tikzpicture} %
-
\begin{tikzpicture}[baseline=(current bounding box.center)]
\tikzstyle{every child node}=[intvertex]
\node[roots](root) {}
[deftree]
child{node{}
  child[edgesp=2] {node{}  
     child[edgesp=1] {node(1){}  
      edge from parent [Reda]}
     child[edgesp=1] {node(2){} 
      edge from parent [N]}
  edge from parent [N]
  }
%%%%
%\node[roots](root) {}
%[deftree]
  child[edgesp=2] {node{}  
     child[edgesp=1] {node(3){}  
      edge from parent [Reda]}
     child[edgesp=1] {node(4){} 
      edge from parent [N]}
  edge from parent [Reda]
  }
edge from parent [N]
}
;
%%%
\fill (root.center) circle (1/\locscale*\lbullet) ;
\node[mathsc, xshift=-1ex] at (root.west) {t};
\node[labf] at (1.south){0};
\node[labf] at (2.south){1};
\node[labf] at (3.south){0};
\node[labf] at (4.south){1};
\end{tikzpicture}
\]
and
\[
\Tcu_{0011}=-
\begin{tikzpicture}[baseline=(current bounding box.center)]
\tikzstyle{every child node}=[intvertex]
\node[roots](root) {}
[deftree]
child{node{}
  child{node(1){}
    edge from parent [Reda]}
  child{node{}
    child{node(2){}
      edge from parent [N]}
    child {node{}  
       child {node(3){}  
        edge from parent [Reda]}
       child {node(4){} 
        edge from parent [N]}
    edge from parent [Reda]}
  edge from parent [N]
  }
edge from parent [Reda]
}
;
%%%
\fill (root.center) circle (1/\locscale*\lbullet) ;
\node[mathsc, xshift=-1ex] at (root.west) {t};
\node[labf] at (1.south){0};
\node[labf] at (3.south){0};
\node[labf] at (4.south){1};
\node[labf] at (2.south){1};
\end{tikzpicture}
-
\begin{tikzpicture}[baseline=(current bounding box.center)]
\tikzstyle{every child node}=[intvertex]
\node[roots](root) {}
[deftree]
child{node{}
  child{node(4){}
    edge from parent [N]}
  child{node{}
    child{node(1){}
      edge from parent [Reda]}
    child {node{}  
       child {node(2){}  
        edge from parent [Reda]}
       child {node(3){} 
        edge from parent [N]}
    edge from parent [N]}
  edge from parent [Reda]
  }
edge from parent [Reda]
}
;
%%%
\fill (root.center) circle (1/\locscale*\lbullet) ;
\node[mathsc, xshift=-1ex] at (root.west) {t};
\node[labf] at (1.south){0};
\node[labf] at (2.south){0};
\node[labf] at (3.south){1};
\node[labf] at (4.south){1};
\end{tikzpicture} %
-
\begin{tikzpicture}[baseline=(current bounding box.center)]
\tikzstyle{every child node}=[intvertex]
\node[roots](root) {}
[deftree]
child{node{}
  child[edgesp=2] {node{}  
     child[edgesp=1] {node(1){}  
      edge from parent [Reda]}
     child[edgesp=1] {node(2){} 
      edge from parent [N]}
  edge from parent [N]
  }
%%%%
%\node[roots](root) {}
%[deftree]
  child[edgesp=2] {node{}  
     child[edgesp=1] {node(3){}  
      edge from parent [Reda]}
     child[edgesp=1] {node(4){} 
      edge from parent [N]}
  edge from parent [Reda]
  }
edge from parent [Reda]
}
;
%%%
\fill (root.center) circle (1/\locscale*\lbullet) ;
\node[mathsc, xshift=-1ex] at (root.west) {t};
\node[labf] at (1.south){0};
\node[labf] at (2.south){1};
\node[labf] at (3.south){0};
\node[labf] at (4.south){1};
\end{tikzpicture} .
\]

The expressions as parametrized cycles of $\Lc_{0001}$, $\Lcu_{0001}$,
$\Lc_{0111}$ and $\Lcu_{0111}$ are given below (in $X\times
\square^7$):
\begin{align*}
\Lc_{0001}&=[t;1-\frac{t}{x_3}, x_3,1-\frac{x_3}{x_2}, x_2, 
1-\frac{x_2}{x_1}, x_1, 1-x_1],
%\quad \subset X \times \square^5, 
\\
%\qquad \mx{and} \qquad \\
\Lcu_{0001}&=[t;\frac{x_3-t}{x_3-1}, x_3,1-\frac{x_3}{x_2}, x_2, 
1-\frac{x_2}{x_1}, x_1, 1-x_1] ,\\
%\quad \subset X \times \square^5
%\end{equation*}
%and
\intertext{}
%\begin{equation*}
\Lc_{0111}&=[t;1-\frac{t}{x_3},1-x_3, \frac{x_2-x_3}{x_2-1}
, 1-x_2,\frac{x_1-x_2}{x_1-1}, x_1, 1-x_1],
%\quad \subset X \times \square^5, 
\\
%\qquad \mx{and} \qquad
\Lcu_{0111}&=[t;\frac{x_3-t}{x_3-1},1- x_3,\frac{x_2-x_3}{x_2-1}
,1- x_2,\frac{x_1-x_2}{x_1-1}, x_1, 1-x_1],
%\quad \subset X \times \square^5
\end{align*}
while the expressions for $\Lc_{0011}$ and $\Lcu_{0011}$ involved linear
combinations:
\begin{multline*}
\Lc_{0011}=-[t;1-\frac{t}{x_3}, x_3,
1-\frac{x_3}{x_2},1-x_2,\frac{x_1-x_2}{x_1-1},x_1,1-x_1] \\ 
-[t;1-\frac{t}{x_3},1-x_3,\frac{x_2-x_3}{x_2-1}
,x_2,1-\frac{x_2}{x_1},x_1,1-x_1] \\
-[t;1- \frac{t}{x_3},1-\frac{x_3}{x_2}, x_2,1-x_2,\frac{x_1-x_3}{x_1-1}, x_1,1-x_1]
\end{multline*}
and 
\begin{multline*}
\Lcu_{0011}=-[t;\frac{x_3-t}{x_3-1}, x_3,
1-\frac{x_3}{x_2},1-x_2,\frac{x_1-x_2}{x_1-1},x_1,1-x_1] \\ 
-[t;\frac{x_3-t}{x_3-1},1-x_3,\frac{x_2-x_3}{x_2-1}
,x_2,1-\frac{x_2}{x_1},x_1,1-x_1] \\
-[t;\frac{x_3-t}{x_3-1},1-\frac{x_3}{x_2}, x_2,1-x_2,\frac{x_1-x_3}{x_1-1}, x_1,1-x_1].
\end{multline*}
\end{exm}
\begin{exm}[Weight $5$] The differential equations satisfied by $\Lc_{01011}$ and
  $\Lcu_{01011}$ are:
 \begin{align*}
\dN(\Lc_{01011})=&-\Lc_{01}\cdot \Lc_{011}-\Lc_{1}\Lcu_{0011}-2\Lc_{011}\Lcu_{01} \\
\dN(\Lcu_{01011})=&\Lcu_{01}\cdot \Lcu_{011}-\Lc_{011}\cdot \Lcu_{01}
-\Lc_{01}\cdot \Lcu_{011} -\Lc_{1}\cdot \Lcu_{0011}.
 \end{align*}
The corresponding colored trees are given by:
\[
\Tc_{01011}=
\begin{tikzpicture}[baseline=(current bounding box.center)]
\tikzstyle{every child node}=[intvertex]
\node[roots](root) {}
[deftree]
child{node{}
  child[edgesp=2] {node{}  
     child[edgesp=1] {node(1){}  
      edge from parent [Reda]}
     child[edgesp=1] {node(2){} 
      edge from parent [N]}
  edge from parent [N]
  }
  child[edgesp=2]{node{}
    child[edgesp=1]{node(3){}
      edge from parent [N]}
    child[edgesp=1] {node{}  
       child {node(4){}  
        edge from parent [Reda]}
       child {node(5){} 
        edge from parent [N]}
    edge from parent [Reda]}
  edge from parent [N]
  }
edge from parent [N]
}
;
%%%
\fill (root.center) circle (1/\locscale*\lbullet) ;
\node[mathsc, xshift=-1ex] at (root.west) {t};
%;
%%%
%\fill (root.center) circle (1/1*\lbullet) ;
%\node[mathsc, xshift=-1ex] at (root.west) {t};
\node[labf] at (1.south){0};
\node[labf] at (2.south){1};
\node[labf] at (3.south){1};
\node[labf] at (4.south){0};
\node[labf] at (5.south){1};
\end{tikzpicture}
%%%%
+
\begin{tikzpicture}[baseline=(current bounding box.center)]
\tikzstyle{every child node}=[intvertex]
\node[roots](root) {}
[deftree]
child{node{}
  child{node(1){}
    edge from parent [N]}
  child{node{}
    child{node(2){}
      edge from parent [Reda]}
    child{node{}
      child{node(3){}
        edge from parent [N]}
      child {node{}  
         child {node(4){}  
          edge from parent [Reda]}
         child {node(5){} 
          edge from parent [N]}
      edge from parent [Reda]}
    edge from parent [N]
    }
  edge from parent [Reda]
  }
edge from parent [N]
}
;
%%%
\fill (root.center) circle (1/\locscale*\lbullet) ;
\node[mathsc, xshift=-1ex] at (root.west) {t};
\node[labf] at (1.south){1};
\node[labf] at (2.south){0};
\node[labf] at (4.south){0};
\node[labf] at (5.south){1};
\node[labf] at (3.south){1};
\end{tikzpicture}
+
\begin{tikzpicture}[baseline=(current bounding box.center)]
\tikzstyle{every child node}=[intvertex]
\node[roots](root) {}
[deftree]
child{node{}
  child{node(1){}
    edge from parent[N]}  
  child{node{}
    child{node(5){}
      edge from parent [N]}
    child{node{}
      child{node(2){}
        edge from parent [Reda]}
      child {node{}  
         child {node(3){}  
          edge from parent [Reda]}
         child {node(4){} 
          edge from parent [N]}
      edge from parent [N]}
    edge from parent [Reda]
    }
  edge from parent [Reda]
  }
edge from parent [N]
}
;
%%%
\fill (root.center) circle (1/\locscale*\lbullet) ;
\node[mathsc, xshift=-1ex] at (root.west) {t};
\node[labf] at (1.south){1};
\node[labf] at (2.south){0};
\node[labf] at (3.south){0};
\node[labf] at (4.south){1};
\node[labf] at (5.south){1};
\end{tikzpicture} %
+
\begin{tikzpicture}[baseline=(current bounding box.center)]
\tikzstyle{every child node}=[intvertex]
\node[roots](root) {}
[deftree]
child{node{}
  child{node(1){}
    edge from parent[N]}
  child{node{}
    child[edgesp=2] {node{}  
       child[edgesp=1] {node(2){}  
        edge from parent [Reda]}
       child[edgesp=1] {node(3){} 
        edge from parent [N]}
    edge from parent [N]
    }
%%%%
%\node[roots](root) {}
%[deftree]
    child[edgesp=2] {node{}  
       child[edgesp=1] {node(4){}  
        edge from parent [Reda]}
       child[edgesp=1] {node(5){} 
        edge from parent [N]}
    edge from parent [Reda]
    }
  edge from parent [Reda]
  }
edge from parent [N]
}
;
%%%
\fill (root.center) circle (1/\locscale*\lbullet) ;
\node[mathsc, xshift=-1ex] at (root.west) {t};
\node[labf] at (1.south){1};
\node[labf] at (2.south){0};
\node[labf] at (3.south){1};
\node[labf] at (4.south){0};
\node[labf] at (5.south){1};
\end{tikzpicture}
%%%%
+2
\begin{tikzpicture}[baseline=(current bounding box.center)]
\tikzstyle{every child node}=[intvertex]
\node[roots](root) {}
[deftree]
child{node{}
  child[edgesp=2]{node{}
    child[edgesp=1]{node(3){}
      edge from parent [N]}
    child[edgesp=1] {node{}  
       child {node(4){}  
        edge from parent [Reda]}
       child {node(5){} 
        edge from parent [N]}
    edge from parent [Reda]}
  edge from parent [N]
  }
  child[edgesp=2] {node{}  
     child[edgesp=1] {node(1){}  
      edge from parent [Reda]}
     child[edgesp=1] {node(2){} 
      edge from parent [N]}
  edge from parent [Reda]
  }
edge from parent [N]
}
;
%%%
\fill (root.center) circle (1/\locscale*\lbullet) ;
\node[mathsc, xshift=-1ex] at (root.west) {t};
%;
%%%
%\fill (root.center) circle (1/1*\lbullet) ;
%\node[mathsc, xshift=-1ex] at (root.west) {t};
\node[labf] at (1.south){0};
\node[labf] at (2.south){1};
\node[labf] at (3.south){1};
\node[labf] at (4.south){0};
\node[labf] at (5.south){1};
\end{tikzpicture} 
\]
and
\begin{multline*}
\Tcu_{01011}=
-
\begin{tikzpicture}[baseline=(current bounding box.center)]
\tikzstyle{every child node}=[intvertex]
\node[roots](root) {}
[deftree]
child{node{}
  child[edgesp=2] {node{}  
     child[edgesp=1] {node(1){}  
      edge from parent [Reda]}
     child[edgesp=1] {node(2){} 
      edge from parent [N]}
  edge from parent [Reda]
  }
  child[edgesp=2]{node{}
    child[edgesp=1]{node(3){}
      edge from parent [N]}
    child[edgesp=1] {node{}  
       child {node(4){}  
        edge from parent [Reda]}
       child {node(5){} 
        edge from parent [N]}
    edge from parent [Reda]}
  edge from parent [Reda]
  }
edge from parent [Reda]
}
;
%%%
\fill (root.center) circle (1/\locscale*\lbullet) ;
\node[mathsc, xshift=-1ex] at (root.west) {t};
%;
%%%
%\fill (root.center) circle (1/1*\lbullet) ;
%\node[mathsc, xshift=-1ex] at (root.west) {t};
\node[labf] at (1.south){0};
\node[labf] at (2.south){1};
\node[labf] at (3.south){1};
\node[labf] at (4.south){0};
\node[labf] at (5.south){1};
\end{tikzpicture}
%%%%
+
\begin{tikzpicture}[baseline=(current bounding box.center)]
\tikzstyle{every child node}=[intvertex]
\node[roots](root) {}
[deftree]
child{node{}
  child[edgesp=2] {node{}  
     child[edgesp=1] {node(1){}  
      edge from parent [Reda]}
     child[edgesp=1] {node(2){} 
      edge from parent [N]}
  edge from parent [N]
  }
  child[edgesp=2]{node{}
    child[edgesp=1]{node(3){}
      edge from parent [N]}
    child[edgesp=1] {node{}  
       child {node(4){}  
        edge from parent [Reda]}
       child {node(5){} 
        edge from parent [N]}
    edge from parent [Reda]}
  edge from parent [Reda]
  }
edge from parent [Reda]
}
;
%%%
\fill (root.center) circle (1/\locscale*\lbullet) ;
\node[mathsc, xshift=-1ex] at (root.west) {t};
%;
%%%
%\fill (root.center) circle (1/1*\lbullet) ;
%\node[mathsc, xshift=-1ex] at (root.west) {t};
\node[labf] at (1.south){0};
\node[labf] at (2.south){1};
\node[labf] at (3.south){1};
\node[labf] at (4.south){0};
\node[labf] at (5.south){1};
\end{tikzpicture}
+
\begin{tikzpicture}[baseline=(current bounding box.center)]
\tikzstyle{every child node}=[intvertex]
\node[roots](root) {}
[deftree]
child{node{}
  child[edgesp=2]{node{}
    child[edgesp=1]{node(3){}
      edge from parent [N]}
    child[edgesp=1] {node{}  
       child {node(4){}  
        edge from parent [Reda]}
       child {node(5){} 
        edge from parent [N]}
    edge from parent [Reda]}
  edge from parent [N]
  }
  child[edgesp=2] {node{}  
     child[edgesp=1] {node(1){}  
      edge from parent [Reda]}
     child[edgesp=1] {node(2){} 
      edge from parent [N]}
  edge from parent [Reda]
  }
edge from parent [Reda]
}
;
%%%
\fill (root.center) circle (1/\locscale*\lbullet) ;
\node[mathsc, xshift=-1ex] at (root.west) {t};
%;
%%%
%\fill (root.center) circle (1/1*\lbullet) ;
%\node[mathsc, xshift=-1ex] at (root.west) {t};
\node[labf] at (1.south){0};
\node[labf] at (2.south){1};
\node[labf] at (3.south){1};
\node[labf] at (4.south){0};
\node[labf] at (5.south){1};
\end{tikzpicture} 
+
\begin{tikzpicture}[baseline=(current bounding box.center)]
\tikzstyle{every child node}=[intvertex]
\node[roots](root) {}
[deftree]
child{node{}
  child{node(1){}
    edge from parent [N]}
  child{node{}
    child{node(2){}
      edge from parent [Reda]}
    child{node{}
      child{node(3){}
        edge from parent [N]}
      child {node{}  
         child {node(4){}  
          edge from parent [Reda]}
         child {node(5){} 
          edge from parent [N]}
      edge from parent [Reda]}
    edge from parent [N]
    }
  edge from parent [Reda]
  }
edge from parent [Reda]
}
;
%%%
\fill (root.center) circle (1/\locscale*\lbullet) ;
\node[mathsc, xshift=-1ex] at (root.west) {t};
\node[labf] at (1.south){1};
\node[labf] at (2.south){0};
\node[labf] at (4.south){0};
\node[labf] at (5.south){1};
\node[labf] at (3.south){1};
\end{tikzpicture}
\\
+
\begin{tikzpicture}[baseline=(current bounding box.center)]
\tikzstyle{every child node}=[intvertex]
\node[roots](root) {}
[deftree]
child{node{}
  child{node(1){}
    edge from parent[N]}  
  child{node{}
    child{node(5){}
      edge from parent [N]}
    child{node{}
      child{node(2){}
        edge from parent [Reda]}
      child {node{}  
         child {node(3){}  
          edge from parent [Reda]}
         child {node(4){} 
          edge from parent [N]}
      edge from parent [N]}
    edge from parent [Reda]
    }
  edge from parent [Reda]
  }
edge from parent [Reda]
}
;
%%%
\fill (root.center) circle (1/\locscale*\lbullet) ;
\node[mathsc, xshift=-1ex] at (root.west) {t};
\node[labf] at (1.south){1};
\node[labf] at (2.south){0};
\node[labf] at (3.south){0};
\node[labf] at (4.south){1};
\node[labf] at (5.south){1};
\end{tikzpicture} %
+
\begin{tikzpicture}[baseline=(current bounding box.center)]
\tikzstyle{every child node}=[intvertex]
\node[roots](root) {}
[deftree]
child{node{}
  child{node(1){}
    edge from parent[N]}
  child{node{}
    child[edgesp=2] {node{}  
       child[edgesp=1] {node(2){}  
        edge from parent [Reda]}
       child[edgesp=1] {node(3){} 
        edge from parent [N]}
    edge from parent [N]
    }
%%%%
%\node[roots](root) {}
%[deftree]
    child[edgesp=2] {node{}  
       child[edgesp=1] {node(4){}  
        edge from parent [Reda]}
       child[edgesp=1] {node(5){} 
        edge from parent [N]}
    edge from parent [Reda]
    }
  edge from parent [Reda]
  }
edge from parent [Reda]
}
;
%%%
\fill (root.center) circle (1/\locscale*\lbullet) ;
\node[mathsc, xshift=-1ex] at (root.west) {t};
\node[labf] at (1.south){1};
\node[labf] at (2.south){0};
\node[labf] at (3.south){1};
\node[labf] at (4.south){0};
\node[labf] at (5.south){1};
\end{tikzpicture}
%%%%
\end{multline*}
The corresponding expression as parametrized cycles are given below (in $X\times
\square^9$):
\begin{multline*}
\Lc_{01011}=[t;1-\frac{t}{x_4},1-\frac{x_4}{x_3},x_3,1-x_3,
1-\frac{x_4}{x_2},1-x_2,\frac{x_1-x_2}{x_2-1},x_1,1-x_1] \\
+[t;1-\frac{t}{x_4},1-x_4,\frac{x_3-x_4}{x_3-1}, x_3,
1-\frac{x_3}{x_2},1-x_2,\frac{x_1-x_2}{x_1-1},x_1,1-x_1] \\ 
+[t;1-\frac{t}{x_4},1-x_4,\frac{x_3-x_4}{x_3-1},1-x_3,\frac{x_2-x_3}{x_2-1}
,x_2,1-\frac{x_2}{x_1},x_1,1-x_1] \\
+[t;1-\frac{t}{x_4},1-x_4,\frac{x_3-x_4}{x_3-1},1-\frac{x_3}{x_2}, x_2,1-x_2,
\frac{x_1-x_3}{x_1-1}, x_1,1-x_1] \\
+2[t; 1-\frac{t}{x_4},1-\frac{x_4}{x_3},1-x_3,\frac{x_2-x_3}{x_2-1},x_2,1-x_2,
\frac{x_1-x_4}{x_1-1},x_1,1-x_1],
\end{multline*}
and
\begin{multline*}
\Lcu_{01011}=-[t;\frac{x_4-t}{x_4-1},\frac{x_3-x_4}{x_3-1},x_3,1-x_3,
\frac{x_2-x_4}{x_2-1},1-x_2,\frac{x_1-x_2}{x_2-1},x_1,1-x_1] \\
+[t; \frac{x_4-t}{x_4-1}, 1-\frac{x_4}{x_3},x_3,1-x_3,
\frac{x_2-x_4}{x_2-1},1-x_2,\frac{x_1-x_2}{x_1-1},x_1,1-x_1]\\
+[t; \frac{x_4-t}{x_4-1},1-\frac{x_4}{x_3},1-x_3,\frac{x_2-x_3}{x_2-1},x_2,1-x_2,
\frac{x_1-x_4}{x_1-1},x_1,1-x_1] \\
+[t;\frac{x_4-t}{x_4-1},1-x_4,\frac{x_3-x_4}{x_3-1}, x_3,
1-\frac{x_3}{x_2},1-x_2,\frac{x_1-x_2}{x_1-1},x_1,1-x_1] \\ 
+[t;\frac{x_4-t}{x_4-1},1-x_4,\frac{x_3-x_4}{x_3-1},1-x_3,\frac{x_2-x_3}{x_2-1}
,x_2,1-\frac{x_2}{x_1},x_1,1-x_1] \\
+[t;\frac{x_4-t}{x_4-1},1-x_4,\frac{x_3-x_4}{x_3-1},1-\frac{x_3}{x_2}, x_2,1-x_2,
\frac{x_1-x_3}{x_1-1}, x_1,1-x_1] .
\end{multline*}
 \end{exm}
\section{Bar construction settings}
In the cycle motives setting, a motive
over $X$ is a comodule on the $\HH^0$ of the bar construction over $
\cNg{X}{\bullet}$ modulo shuffle products. For more details, one can look at the works of Bloch and Kriz \cite{BKMTM},
Spitzweck \cite{SpitzweckSCVTM, OAMSpit} (i.e. as presented in \cite{KTMMLevine}) and
 Levine \cite{LEVTMFG}.

In this context, the cycles constructed above, which are expected to correspond
to multiple polylogarithms (as outlined in Section \ref{sec:int}), induce
elements in this $\HH^0$ 
and naturally gives rise to an associated comodule, thus to mixed Tate motives
corresponding to multiple polylogarithms.

Before, giving explicit expressions for the induced elements in the bar
construction, the beginning of the section is devoted to a short review of the
bar construction.
\subsection{Bar construction}
 \label{sec:bar}
As there does not seem to exist a global sign
convention for the various operations on the bar construction,  
the main definitions in the cohomological setting are recalled
below following the (homological) description given in \cite{LodValAO}.

Let $A$ be a commutative differential graded algebra (c.d.g.a.) with
augmentation $\ve : A \lra \Q$, with product 
$\mu_A$ and let $A^+$ be the augmentation ideal $A^+=\ker(\ve)$. Note again that
\emph{commutative} in this context stand for \emph{graded commutative}.  

In order to understand the sign convention below and the ``bar grading'', one
should think of the bar construction as built on the tensor coalgebra over 
the shifted (suspended) graded vector space $A^+[1]$. 
%Define $s$ to
%be a degree $-1$ generator and consider the degree $1$ morphism 
%\[
%\Q s \otimes \Q s \lra \Q s.
%\]
\begin{defn} The bar construction $B(A)$ over $A$ is the tensor coalgebra over
  the suspension of $A^+$. 
\begin{itemize}
\item In particular, as vector space
  $B(A)$ is given by :
\[
B(A)=T( A^+)=\bigoplus_{n \geqs 0} ( A^+)^{\otimes n}.
\]

\item A homogeneous element $\mathbf{a}$ of tensor degree $n$ is denoted using
  the \emph{bar} notation, that is 
\[
\mathbf{a}=[a_1| \ldots| a_n]
\]
and its degree is 
\[
\deg_B(\mathbf{a})=\sum_{i=1}^n\left(\deg_A(a_i)-1\right).
\]
\item The coalgebra structure comes from the natural deconcatenation coproduct,
  that is 
\[
\Delta([a_1| \ldots| a_n])=\sum_{i=0}^n
[a_1| \ldots| a_i]\otimes [a_{i+1}| \ldots| a_n].
\]
\end{itemize}
\end{defn}
\begin{rem} This construction can be seen as a simplicial total complex
  associated to the complex $A$ (Cf. \cite{BKMTM}). The augmentation makes it
  possible to use directly $A^+$ without referring to the tensor coalgebra over $A$
  and without the need of killing the degeneracies.

However this simplicial presentation usually masks the need of working with the
shifted complex.  
\end{rem}
We associate to any bar element $[a_1| \ldots | a_n]$ the function $\eta(i)$
giving its ``partial'' degree
\[
\eta(i)=\sum_{k=1}^{i}(\deg_{A}(a_k)-1).
\]
The original differential $d_A$ induces a differential $D_1$ on $B(A)$ given
by
\[
D_1([a_1| \ldots | a_n])=-\sum_{i=1}^n(-1)^{\eta(i-1)}
[a_1| \ldots |d_A(a_i)| \ldots| a_n]
\]
where the initial minus sign comes from the fact that the differential on the shifted
complex $A[1]$ is $-d_A$.
Moreover, the multiplication on $A$ induces another differential $D_2$ on $B(A)$ given by
\[
D_2([a_1| \ldots| a_n])=-\sum_{i=1}^{n-1}(-1)^{\eta(i)}
[a_1| \ldots |\mu_A(a_i,a_{i+1})| \ldots | a_n]
\]
where the signs are coming from Koszul commutation rules (due to the
shifting). One checks that the two 
differentials anticommute providing $B(A)$ with a total differential.
\begin{defn} The total differential on $B(A)$ is defined by
\[
d_{B(A)}=D_1+D_2.
\]
\end{defn}
The last structure arising with the bar construction is the graded shuffle
product
\[
[a_1 | \ldots | a_n]\sha [a_{n+1} | \ldots | a_{n+m}]=
\sum_{\sigma \in sh(n,m)}(-1)^{\ve_{gr}(\sigma)}[a_{\sigma(1)}, \ldots, a_{\sigma(n+m)}]
\]
where $sh(n,m)$ denotes the permutation of $\{1, \ldots , n+m\}$ such that if
$1\leqs i<j \leqs n$ or $n+1\leqs i<j \leqs n+m$ then $\sigma(i)<\sigma(j)$. The
sign is the graded signature of the permutation (for the degree in $A^+[1]$) given by 
\[
\ve_{gr}(\sigma)=\sum_{\substack{i<j \\ \sigma(i)>\sigma(j)}}
(\deg_{A}(a_i)-1)(\deg_{A}(a_j)-1).
\]  
With these definitions, one can explicitly check the following
\begin{prop} Let $A$ be a (Adams/weight graded) c.d.g.a.
The operations $\Delta$, $d_{B(A)}$ and $\sha$ together with the obvious unit and counit
give $B(A)$ a structure of (Adams graded) commutative graded differential Hopf algebra. 

In particular, these operations induce on $\HH^0(B(A))$, and more generally on
$\HH^*(B(A))$, a (Adams graded) commutative Hopf algebra structure. This (Adams
graded) algebra 
is cohomologically graded in the case of $\HH^*(B(A))$ and cohomologically 
graded concentrated in degree $0$ in
the case of $\HH^0(B(A))$.
\end{prop}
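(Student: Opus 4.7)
The plan is to verify each piece of structure in turn, with the only real work being the sign bookkeeping forced by the suspension. All identities are well known for the bar construction, so my goal would be to organize the verification so that each Koszul sign is traced back to the shift on $A^+[1]$ rather than computed ad hoc.

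First, I would handle the differential. Coassociativity of the deconcatenation $\Delta$ is immediate from its definition, so the first substantive point is $d_{B(A)}^2=0$. Writing $d_{B(A)}=D_1+D_2$, this splits into three identities: $D_1^2=0$, $D_2^2=0$, and $D_1D_2+D_2D_1=0$. The first follows termwise from $d_A^2=0$ together with the fact that the partial-degree function $\eta$ is unchanged by applying $d_A$ inside a slot (since the $-1$ shift is what is recorded, and $d_A$ raises $\deg_A$ by one, but the sign $(-1)^{\eta(i-1)}$ only involves indices strictly before $i$). The second follows from associativity of $\mu_A$: the two ways of bracketing three consecutive entries cancel pairwise once the signs $(-1)^{\eta(i)}$ are compared. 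The cross term is the Leibniz rule for $d_A$ on $\mu_A$, transported through the signs. Here the shift in $A^+[1]$ is what produces the expected $(-1)^{\deg_A(a_i)}$ on $\mu_A(d_Aa_i,a_{i+1})$ vs $\mu_A(a_i,d_Aa_{i+1})$.

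Next, I would check that $(\sha,\Delta)$ form a bialgebra and that $d_{B(A)}$ is a biderivation. Associativity and graded commutativity of $\sha$ are standard properties of the shuffle product on a tensor (co)algebra; graded commutativity requires commutativity of $\mu_A$ exactly when shuffling forces $\mu_A$ to be applied, i.e.\ never on the nose in the shuffle itself, but it enters when one checks that $D_2$ is a derivation for $\sha$. Compatibility $\Delta(x\sha y)=\Delta(x)\sha\Delta(y)$ is the classical identity expressing that deconcatenation and shuffle make the tensor module into a bialgebra; I would simply invoke it after noting that it holds in the graded setting with the above sign convention. To see that $d_{B(A)}$ is a derivation for $\sha$, one checks $D_1$ and $D_2$ separately: $D_1$ is a derivation because $d_A$ acts in a single slot and the signs $(-1)^{\eta(i-1)}$ recombine into the graded Leibniz signs for shuffles; $D_2$ is a derivation because combining two consecutive entries in a shuffled word either uses two entries from the same factor (reproducing $D_2$ on that factor) or would combine entries from different factors, and the latter contributions cancel pairwise by graded commutativity of $\mu_A$ — this cancellation is the step where $A$ being commutative is actually used.

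The Hopf algebra antipode is then automatic from the connected graded bialgebra structure (positive grading by bar degree, with $B(A)_0=\Q$), so I would only state this and not construct it explicitly. For the Adams grading, each operation preserves the total weight since $d_A$, $\mu_A$, $\Delta$ and $\sha$ do so individually, which is immediate from inspection. Finally, the induced structure on $\HH^\bullet(B(A))$ follows because $d_{B(A)}$ is a derivation for $\sha$ and a coderivation for $\Delta$, so both descend to cohomology; $\HH^0$ is concentrated in cohomological degree zero by definition, and the Adams grading is inherited.

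The genuine obstacle, and the only point where one can easily introduce an error, is the systematic sign check for the mixed identity $D_1D_2+D_2D_1=0$ and for the derivation property of $D_2$ on $\sha$. I would handle both by fixing once and for all the convention that every entry $a_i$ carries bar-degree $\deg_A(a_i)-1$ and that every transposition of $a_i$ past $a_j$ produces the Koszul sign $(-1)^{(\deg_A(a_i)-1)(\deg_A(a_j)-1)}$; with this convention all signs above are forced, and one recovers the formulas for $D_1$, $D_2$ and $\sha$ stated in the excerpt.
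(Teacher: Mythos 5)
Your verification is correct and is exactly what the paper intends: the proposition is stated with ``one can explicitly check the following'' and no written proof, so your systematic reduction to $D_1^2=0$, $D_2^2=0$, the cross-term Leibniz identity, the shuffle--deconcatenation bialgebra compatibility, and the derivation property of $D_2$ on $\sha$ (where commutativity of $A$ enters) is the intended direct check, with all signs correctly traced to the suspension $A^+[1]$. The only cosmetic caveat is that for the automatic antipode you should say the connected grading is by tensor word length (or Adams weight), not by the cohomological bar degree $\deg_B$, since $B(A)$ in cohomological degree $0$ is not reduced to $\Q$.
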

We recall that the set of indecomposable elements of an augmented c.d.g.a. is
defined as the augmentation ideal $I$ modulo products, that is $I/I^2$. Applying a
general fact about Hopf algebras, the
coproduct structure on $\HH^0(B(A))$ induces a coLie
algebra structure on its set  of indecomposable elements.

% The bar construction is a quasi-isomorphism invariant and comparing a
% generalized nilpotent c.d.g.a. to its bar construction shows (one can see
% \cite{BKMTM})
% \begin{prop}[{\cite{BKMTM}}]Let $A$  be a cohomologically connected c.d.g.a and
%   let 
% \[
% \vp : M \lra A
% \]
% be a minimal model of $M$. 

% Defining $QM$ (resp. $Q\HH^*(B(A))$) the set of indecomposable elements of $M$
% (resp. $\HH^*(B(A))$), there is an isomorphism of coLie algebra
% \[
% \vp_Q : QM\otimes s\Q \st{\sim}{\lra} Q\HH^*(B(A))
% \]
% canonical after the choice of $\vp$.
% \end{prop}
\subsection{Bar elements}
Considering the bar construction over $\cNg{X}{\bullet}$, part of the issue
is to associate to any cycle $\Lc_W$ and $\Lcu_W$ a corresponding element in
$\HH^0(B(\cNg{X}{\bullet}))$. 

As the weight $1$ cycles $\Lc_0$ and $\Lc_1$ have $0$ differential in
$\cNg{X}{\bullet}$, there are obvious corresponding bar elements:
\begin{equation}\label{L0L1:bar}
\Lc_0^B=[\Lc_0] \qquad \mx{and} \qquad \Lc_1^B=[\Lc_1].
\end{equation} 

 Let $\mc M_X$ denote the indecomposable elements of
 $\HH^0(B(\cNg{X}{\bullet}))$ and let $\tau$ be
 the morphism exchanging the two factors of $\HH^0(B(\cNg{X}{\bullet}))\otimes
 \HH^0(B(\cNg{X}{\bullet}))$. We denote by
  \[
d_{\Delta}=\frac{1}{2}\left(\Delta-\tau \Delta\right)
\]
the differential  on the coLie algebra $\mc
  M_X$ induced by the 
  coproduct on $\HH^0(B(\cNg{X}{\bullet}))$.
In general, one should have the following.
\begin{claim} For any Lyndon word $W$ (of length greater or equal to $2$), 
there exist elements $\Lc^B_W$ and ${\Lcub_W}$ in $B(\cNg{X}{\bullet})$ of bar
degree $0$ satisfying:
\begin{itemize}
\item Let $d_B$ denotes the  total bar differential
  $d_B=d_{B(\cNg{X}{\bullet})}$. Then one has:
\[
d_B(\Lc^B_W)=d_B({\Lcub_W})=0.
\]
\item The tensor degree $1$ part of $\Lc^B_W$ (resp. ${\Lcub_W}$) is given by
  $[\Lc_W]$ (resp. $[\Lcu_W]$). 
\item The elements  $\Lc^B_W$
  (resp. ${\Lcub_W}$) satisfy the differential equation \eqref{ED-L}
  (resp. \eqref{ED-Lc}) in $\mc M_X$. That is 
\[
d_{\Delta}(\Lc_W^B)=-\left(\sum_{U<V} a_{U,V}^W\Lc_U^B \Lc_V^B + 
\sum_{U,V}b_{U,V}^W\Lc_{U}^B{\Lcub_V}\right) \qquad \in \mc M_X \wedge \mc M_X
\]
and 
\begin{multline*}
d_{\Delta}({\Lcub_W})=-\left(\sum_{0<U<V} \ap_{U,V}^W{\Lcub_U}{ \Lcub_V} +
\sum_{U,V}\bp_{U,V}^W\Lc_{U}^B{\Lcub_V}+
\sum_{V}\ap_{0,V}\Lc_0^B\Lc_V^B\right)  
\\ \quad \in \mc M_X \wedge \mc M_X
\end{multline*}
where the overall minus sign is due to shifting reasons.
\end{itemize}
\end{claim}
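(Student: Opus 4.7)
The plan is to proceed by induction on the weight $|W|$, exploiting the recursive structure of the differential equations \eqref{ED-L}--\eqref{ED-Lc} together with the deconcatenation coproduct on $B(\cNg{X}{\bullet})$. This is a standard Chen-style ``formal-integration'' construction, lifting a Maurer--Cartan-like family in the cdga to bar-degree zero classes in $\HH^0$.

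For $|W|=1$ (so $W=0$ or $W=1$) the cycles $\Lc_0, \Lc_1$ are already $\dN$-closed, so the elements $\Lc_0^B=[\Lc_0]$ and $\Lc_1^B=[\Lc_1]$ introduced in \eqref{L0L1:bar} are $d_B$-closed, and their coproducts project to zero in $\mc M_X \wedge \mc M_X$, matching the empty right-hand side of \eqref{ED-L} in this range. For the inductive step, assume $\Lc_U^B$ and $\Lcub_U$ have been constructed for every Lyndon word $U$ of length strictly less than $|W|$. Writing $\alpha * \beta$ for the concatenation of bar symbols (so $[a_1|\ldots|a_n] * [b_1|\ldots|b_m] = [a_1|\ldots|a_n|b_1|\ldots|b_m]$), I would define
\[
\Lc_W^B \;=\; [\Lc_W] \;-\; \sum_{U<V} a_{U,V}^W\, \Lc_U^B * \Lc_V^B \;-\; \sum_{U,V} b_{U,V}^W\, \Lc_U^B * \Lcub_V,
\]
and define $\Lcub_W$ analogously from \eqref{ED-Lc}. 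The tensor degree one part is by construction $[\Lc_W]$ (respectively $[\Lcu_W]$).

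Closedness $d_B(\Lc_W^B) = 0$ would be checked via the Leibniz rule for concatenation
\[
d_B(\alpha * \beta) \;=\; d_B(\alpha) * \beta \;+\; (-1)^{|\alpha|_B} \alpha * d_B(\beta) \;+\; J(\alpha,\beta),
\]
where the ``junction term'' $J(\alpha,\beta)$ is the piece of $D_2$ that multiplies the rightmost component of $\alpha$ with the leftmost component of $\beta$. By induction only $J$ survives; at tensor degree one $J(\Lc_U^B,\Lc_V^B)$ reads $\pm [\Lc_U \cdot \Lc_V]$, and summing against the coefficients in \eqref{ED-L} cancels $D_1([\Lc_W]) = -[\dN \Lc_W]$, up to a global sign fixed by the $A^+[1]$-shift convention. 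The main obstacle will be to show that $J$ also vanishes in all higher bar degrees: after unfolding $\Lc_U^B, \Lcub_V$ via their inductive definitions, this reduces to a combinatorial identity which I expect to be exactly the dual form of $\dc^2 = 0$ applied to $\T W \in \Fq$ in the shape of \eqref{ED-T}, hence available ``for free'' from the very construction of $\Lc_W$.

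Finally, the coproduct relation in $\mc M_X$ follows directly from the deconcatenation formula: among all splittings of $\Lc_W^B$, only those separating a complete summand $\Lc_U^B$ from a complete $\Lc_V^B$ (or $\Lcub_V$) survive modulo products in $\mc M_X \otimes \mc M_X$, producing exactly $-\sum a_{U,V}^W \Lc_U^B \wedge \Lc_V^B - \sum b_{U,V}^W \Lc_U^B \wedge \Lcub_V$ in $\mc M_X \wedge \mc M_X$; the analogue for $\Lcub_W$ is identical. The overall minus sign announced in the claim reflects the $A^+[1]$-shift.
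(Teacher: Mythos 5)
You should first be aware that the paper does \emph{not} prove this statement: immediately after the claim the author writes that ``the obstruction for proving the general statement lies in the control of the global combinatoric relating $D_1$, $D_2$ and the two systems of differential equations'' and that he hopes to prove it in the future. Only the instances up to weight $4$ are exhibited, by hand, with carefully symmetrized correction terms. So your proposal is not a variant of the paper's argument; it is an attempt at the open problem, and unfortunately the key step fails.

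The gap is exactly the one you flag as ``the main obstacle'': the cancellation of the junction terms in tensor degree $\geqs 2$ does not hold for the naive concatenation ansatz, and this can be seen already in weight $3$. Take $W=011$, for which $\dN(\Lc_{011})=\Lcu_{011}\cdot\Lc_1=-\Lc_1\cdot\Lcu_{01}$, i.e. $b^{011}_{1,01}=-1$. Your formula gives
\[
\Lc_{011}^B=[\Lc_{011}]+\Lc_1^B*\Lcub_{01}
=[\Lc_{011}]+[\Lc_1|\Lcu_{01}]-[\Lc_1|\Lc_0|\Lc_1],
\]
and a direct computation (using $d_B=D_1+D_2$, $\dN\Lc_{01}... $, more precisely $D_1([\Lc_1|\Lcu_{01}])=-[\Lc_1|\Lc_0\Lc_1]$ and $D_2(-[\Lc_1|\Lc_0|\Lc_1])=[\Lc_1\Lc_0|\Lc_1]+[\Lc_1|\Lc_0\Lc_1]$) yields
\[
d_B(\Lc_{011}^B)=[\Lc_1\Lc_0|\Lc_1]=-[\Lc_0\Lc_1|\Lc_1]\neq 0 .
\]
The leading junction term cancels $-[\dN\Lc_{011}]$ only thanks to the graded commutativity $\Lc_1\Lcu_{01}=-\Lcu_{01}\Lc_1$, but the subleading junction term $[\Lc_1\Lc_0|\Lc_1]$ coming from the degree-$2$ part of $\Lcub_{01}$ survives. (Had you concatenated in the order $\Lcub_{01}*\Lc_1^B$ it would have cancelled here because $\Lc_1\cdot\Lc_1=0$, which shows the construction is order-dependent and that no uniform ordering convention can be expected to work in general.) This is why the elements the paper actually writes down are not concatenations of lower-weight elements: $\Lc_{011}^B$ has the antisymmetrized degree-$2$ part $-\tfrac12([\Lcu_{01}|\Lc_1]-[\Lc_1|\Lcu_{01}])$ and a hand-crafted degree-$3$ part with coefficients $\tfrac14$, and $\Lcb_{0011}$ contains terms such as $-\tfrac12([\Lc_0|\Lc_0|\Lc_1|\Lc_1]-[\Lc_1|\Lc_1|\Lc_0|\Lc_0])$ that are not of concatenation type at all. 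Your appeal to $\dc^2=0$ cannot rescue this: that identity governs the quadratic part of the differential equations (i.e. the consistency of $\dN^2=0$ on the cycles), not the higher coherences needed to kill all junction terms; producing those higher homotopies is precisely the unresolved combinatorial problem. The final assertion about the coproduct is also stated too quickly — the paper needs a page of shuffle manipulations to verify it for $\Lcb_{0011}$ alone — but the essential defect is that the proposed elements are not $d_B$-closed.
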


  The obstruction for proving the general
statement lies in the control of the global combinatorics relating $D_1$, $D_2$
and the two systems of differential equations. %The author hope to prove
% the claim in a near future but is unfortunately not at this point yet.

Below, one finds some elements $\Lc_W^B$ and ${\Lcub_W}$ corresponding
to the previously described examples together with some relations among those
elements. Once the element $\Lc_W^B$ are explicitly described, it is a
straightforward computation to check that it lies in the kernel of $d_B$ and
this verification will be omitted. 

Note that all cycles $\Lc_W$ and $\Lcu_W$ are
in cohomological degree $1$, that is in $\cNg{X}{1}$. Thus, signs appearing in
the operations on the bar construction are much simpler as all terms in
$\deg_{A}(a_i)-1$ are $0$.

\begin{exm}[Weight $2$]
Cycles $\Lc_{01}$ and $\Lcu_{01}$ satisfy
$\dN(\Lc_{01})=\dN(\Lcu_{01})=\Lc_0\Lc_1$. Thus one can define
\begin{equation}\label{01B}
\Lc_{01}^B=[\Lc_{01}]-\frac{1}{2}\left([\Lc_{0}|\Lc_1]-[\Lc_{1}|\Lc_0] \right)
%\end{equation}
\quad \mx{and} \quad 
%\begin{equation}
\Lcub_{01}=[\Lcu_{01}]-\frac{1}{2}\left([\Lc_{0}|\Lc_1]-[\Lc_{1}|\Lc_0] \right).
\end{equation}\label{u01B}

Remark that, looking at things modulo products, that is in $\mc M_X$,  the tensor
degree $2$ involves some choices. Instead of 
\[
-\frac{1}{2}\left([\Lc_{0}|\Lc_1]-[\Lc_{1}|\Lc_0] \right),
\]
we could have used
\[
-[\Lc_0|\Lc_1]
\qquad \mx{or} \qquad 
[\Lc_1|\Lc_0]
\]
and obtained the same elements in $\mc M_X$ as 
\[
-\frac{1}{2}\left([\Lc_{0}|\Lc_1]-[\Lc_{1}|\Lc_0] \right)=
-[\Lc_0|\Lc_1]+\frac{1}{2} \Lc_0^B\sha \Lc_1^B = 
[\Lc_1|\Lc_0]-\frac{1}{2} \Lc_0^B\sha \Lc_1^B.
\]
The above choice reflects in some sense that there is no preferred choice for either
\[
\dN(\Lc_{01})=\dN(\Lcu_{01})=\Lc_0\Lc_1
\quad \mx{or} \quad \dN(\Lc_{01})=\dN(\Lcu_{01})=-\Lc_1\Lc_0.
\] 

Recall that we have defined a cycle $\Lc_{01}(1)$ in $\cNg{X}{1}$ by 
\[
\Lc_{01}(1)=j^*(p^*\circ i_1^*(\ol{\Lc_{01}}))
\]
Building the cycle $\Lc_{011}$, we have introduced the cycle $\Lcu_{01}$ instead
of using the difference $\Lc_{01}-\Lc_{01}(1)$ in order to keep working with
equidimensional cycles. The ``correspondence'' 
\[
\Lcu_{01} \leftrightarrow \Lc_{01}-\Lc_{01}(1)
\]
becomes an equality in $\HH^0(B(\cNg{X}{\bullet}))$.

More precisely, using either the commutation of the above morphisms with the
differential or the expression of $\Lc_{01}(1)$ as parametrized cycle, one sees
that $\dN(\Lc_{01}(1))=0$ and one defines
\[
\Lc_{01}^B(1)=[\Lc_{01}(1)].
\]

A direct computation shows that 
\[
\Lc_{01}(1)=[t; 1-\frac{1}{x_1}, x_1,1-x_1].
\]
Now, from the expressions of $\Lc_{01}$, $\Lc_{01}(1)$ and $\Lcu_{01}$ as
parametrized cycles, one checks that in $\cNg{X}{1}$
\[
 \Lc_{01}-\Lc_{01}(1)=\Lcu_{01} +\dN(C_{01}) 
\]
where $C_{01}$ is the element of $\cNg{X}{0}$ defined by 
\[
C_{01}=-[t;y,\frac{y-\frac{x_1-t}{x_1}}{y-\frac{x_1-1}{x_1}},x_1,1-x_1] \subset X
\times \square^4.
\]
The bar element $C_{01}^B=[C_{01}]$ is of bar degree $-1$ and gives in
$B(\cNg{X}{\bullet})$
\[
\Lc_{01}^B-\Lc_{01}^B(1)={\Lcub_{01}}-d_B(C_{01}^B)
\]
and thus, the  equality  $\Lc_{01}^B-\Lc_{01}^B(1)={\Lcub_{01}} $ in the $\HH^0$.

For these weight $2$ examples , computing the deconcatenation coproduct is
trivial and gives the expected relation
\[
d_{\Delta}(\Lc_{01}^B)=d_{\Delta}({\Lcub_{01}})=-\Lc_0^B\wedge \Lc_1^B.
\]
Finally the  motive corresponding to $\Lc_{01}$ is the comodule generated by
$\Lcb_{01}$, that is the subvector space of $\mc M_X$ spanned by $\Lcb_{01}$,
$\Lcb_0$ and $\Lcb_1$.

\end{exm}
\begin{exm}[Weight $3$]\label{exm:001B011B}
The differentials of $\Lc_{001}$, $\Lcu_{001}$, $\Lc_{011}$, $\Lcu_{011}$ allow us
to easily write down the corresponding tensor degree $1$ and $2$. The
expressions below try to keep a symmetric presentation for the part in
 tensor degree $3$.

In the equations below, cycles $\Lc_W$ are simply denoted by $W$ and cycles
$\Lcu_W$ simply by $\ol{W}$. We will also use this abuse of notation later on
in weight $4$. One defines

\begin{align*}
\Lc_{001}^B&= [001]-\frac{1}{2}\left([0|01]-[01|0] \right)
+\frac{1}{4}\left([0|0|1] -  [0|1|0] + [1|0|0] \right),\\
{\Lcub_{001}}&=[\ol{001}]-\frac{1}{2}\left([0|01]-[01|0] \right)
+\frac{1}{4}\left([0|0|1] -  [0|1|0] + [1|0|0] \right)
\end{align*}
and 
\begin{align*}
\Lc_{011}^B&= [011]-\frac{1}{2}\left([\ol{01}|1]-[1|\ol{01}] \right)
+\frac{1}{4}\left([0|1|1] -  [1|0|1] + [1|1|0] \right),\\
{\Lcub_{011}}&=[\ol{011}]-\frac{1}{2}\left([\ol{01}|1]-[1|\ol{01}] \right)
+\frac{1}{4}\left([0|1|1] -  [1|0|1] + [1|1|0] \right).
\end{align*} 

As the cycles $\Lc_{001}$ and $\Lcu_{001}$ (resp. $ \Lc_{011}$ and $\Lcu_{011}$)
differ only by their first $\square^1$ factors, the arguments used to compare
$\Lc_{01}^B$ and $\Lcub_{01}$ apply here and give:
\[
\Lcb_{001}-\Lcb_{001}(1)=\Lcub_{001}
\quad \mx{ and } \quad 
\Lcb_{001}-\Lcb_{001}(1)=\Lcub_{001}
\qquad \in \mc M_X.
\]

The ``correction'' cycles giving the explicit relations between the cycles are
\[
C_{001}=-[t;s,\frac{s-\frac{x_2-t}{x_2}}{s-\frac{x_2-1}{x_2}}
, x_2, 1-\frac{x_2}{x_1}, x_1, 1-x_1]
\]
and 
\[
C_{011}=[t;s,\frac{s-\frac{x_2-t}{x_2}}{s-\frac{x_2-1}{x_2}}
, 1-x_2,\frac{x_1-x_2}{x_1-1}, x_1, 1-x_1].
\]

Now, computing the reduced coproduct $\Delta'=\Delta - 1\otimes \id-\id \otimes
1$ of $\Lcb_{001}$ gives:
\begin{multline*}
\Delta'(\Lcb_{001})=-\frac{1}{2}\left( [0]\otimes [01]-[01]\otimes [0]\right) \\
+\frac{1}{4}\left( [0]\otimes [0|1] - [0]\otimes [1|0] + [1]\otimes [0|0]
\right. \\
\left.
+[0|0]\otimes [1] -[0|1]\otimes [0] + [1|0]\otimes [0]\right).
\end{multline*}
As $[0|0]=1/2 \Lcb_0\sha \Lcb_0$, one has modulo products
\[
\Delta'(\Lcb_{001})=-\frac{1}{2}\left(
[0]\otimes \left([01]-\frac{1}{2}([0|1]-[1|0])\right)
- \left([01]-\frac{1}{2}([0|1]-[1|0])\right) \otimes [0] 
\right).
\] 
Similar computations apply to $\Lcub_{001}$, $\Lcb_{011}$ and $\Lcub_{011}$ and
give in $\mc M_X \wedge \mc M_X$:
\[
d_{\Delta}(\Lcb_{001})=d_{\Delta}(\Lcub_{001})=-\Lcb_0\w \Lcb_{01}
\quad \mx{and} \quad 
d_{\Delta}(\Lcb_{011})=d_{\Delta}(\Lcub_{011})=-\Lcub_{01} \w \Lcb_1.
\]

%%%%%%%%%%%%%
One should remark that the equality $\Lcub_{01}=\Lcb_{01}-
\Lcb_{01}(1)$ in the $\HH^0$ implies
\begin{equation}\label{011BeqT}
d_{\Delta}(\Lcb_{011})=-\left(\Lcb_{01}-\Lcb_{01}(1)\right) \w \Lcb_1
\end{equation}
which is (up to a global minus sign) the equation satisfied by $\T{011}$ as shown at Example \ref{exm:dT}.

Finally, the corresponding comodules giving motives associated to the cycle
$\Lc_{001}$ and $\Lc_{011}$ are the subvector spaces of $\mc M_X$ generated
respectively by 
\[
\left<\Lcb_{001},\, \Lcb_{01},\,\Lcb_{0},\,\Lcb_{1}\right>
\]
and 

\[
\left<\Lcb_{011},\, \Lcb_{01},\, \Lcb_{01}(1),\,\Lcb_{0},\,\Lcb_{1}\right>.
\]
 
%We conclude this example with an other expression modulo product for
%$\Lcb_{001}$ which seems  
% \Lc_{001}^B&= [001]-\frac{1}{2}\left([0|01]-[01|0] \right)
% +\frac{1}{6}\left([0|0|1] - 2 [0|1|0] + [1|0|0] \right)\\
% {\Lcu_{001}}^B&=[\ol{001}]-\frac{1}{2}\left([0|01]-[01|0] \right)
% +\frac{1}{6}\left([0|0|1] - 2 [0|1|0] + [1|0|0] \right)
% \end{align*}
\end{exm}
The above arguments apply similarly in weight $4$. Hence, we will describe below
the case of $\Lc_{0011}$ as it gives a ``preview'' of the combinatorial
difficulties related to the bar construction context. 
\begin{exm}[Weight $4$: $\Lcb_{0011}$]
We give below an element $\Lcb_{0011}$ in the bar construction with zero
differential and with tensor degree $1$ part equal to $\Lc_{0011}$:%  We recall that
\begin{align*}
\Lcb_{0011}=&[0011]-\frac{1}{2}\left([0|011]-[0|011]+[\ol{001}|1]-[1|\ol{001}]
-[01|\ol{01}]+[\ol{01}|01] \right) \\
&+\frac{1}{4}\left(-[0|1|\ol{01}]+[0|1|01]-[01|0|1]+[\ol{01}|0|1]
-[\ol{01}|1|0]+[01|1|0]\right. \\
&\left. -[1|0|01]+[1|0|\ol{01}]+[1|\ol{01}|0]
+[1|01|0]+[0|\ol{01}|1]+[0|01|1]\right) \\
&-\frac{1}{2}\left([0|0|1|1]%+[0|1|0|1]+[1|0|0|1]+
-[1|1|0|0]% -[1|0|1|0]-[0|1|0|1]
\right).
\end{align*}
Identifying the reduced coproduct of $\Lcb_{0011}$ with 
\begin{multline}\label{res-D-0011}
-\frac{1}{2}\left(
\Lcb_{0}\otimes \Lcb_{011}-\Lcb_{011}\otimes \Lcb_0 
+\Lcub_{001}\otimes \Lcb_1 -\Lcb_1 \otimes \Lcub_{001}
\right. \\
\left.
-\Lcb_{01}\otimes \Lcub_{01}  +\Lcub_{01}\otimes \Lcb_{01} 
\right)
\end{multline}
is more difficult than in the previous cases. First of all, one remarks that in the
above expression the terms in $\left(\cNg{X}{1}\right)^{\otimes 2} \otimes 
\left(\cNg{X}{1}\right)^{\otimes 2}$
are coming only from $-\Lcb_{01}\otimes \Lcub_{01} $ and $\Lcub_{01}\otimes
\Lcb_{01}  $ and cancel each other. Thus the expression \eqref{res-D-0011} has
no term in $\left(\cNg{X}{1}\right)^{\otimes 2} \otimes 
\left(\cNg{X}{1}\right)^{\otimes 2}$. On the other hand, the terms in
$\left(\cNg{X}{1}\right)^{\otimes 2} \otimes 
\left(\cNg{X}{1}\right)^{\otimes 2}$ coming from
$\Delta'(\Lcb_{0011})$ are given by
\begin{multline*}
-\frac{1}{2}\left([0|0]\otimes[1|1]-[1|1]\otimes[0|0]\right)= \\
-\frac{1}{8}\left(\Lcb_0\sha\Lcb_0\otimes \Lcb_1\sha\Lcb_1-
\Lcb_1\sha\Lcb_1\otimes \Lcb_0\sha\Lcb_0\right)
\end{multline*}
and thus are zero in $\mc M_X \w \mc M_X$.

%%%%%%
The terms in $\cNg{X}{1}\otimes \cNg{X}{1}$ in the above expression
\eqref{res-D-0011} obviously agree with the corresponding terms of
$\Delta'(\Lcb_{0011})$ as the term of tensor degree $2$ of $\Lcb_{0011}$ is
written down that way. 

Computations below are done in $B(\cNg{X}{\bullet})\otimes
B(\cNg{X}{\bullet})$. They will induce the expected relation in $\mc M_X\w \mc
M_X$ after going to the $\HH^0$ and taking the quotient modulo shuffle product.
Let $\pi_n : B(\cNg{X}{\bullet}) \lra \left(\cNg{X}{\bullet}\right)^{\otimes
  n}$ be the projection to the $n$-th tensor factor. From the above discussion
it is enough to compute 
$\Delta'(\pi_3(\Lcb_{0011}))$ and part of $\Delta'(\pi_{4}(\Lcb_{0011}))$.

First the definition of the coproduct gives 
\begin{align*}
4\Delta'(\pi_3(\Lcb_{0011}))=&
\left(
%\hphantom{-}
[0]\otimes [\ol{01}|1]+[0|01]\otimes [1]-[01]\otimes [0|1]-[0|1]\otimes [\ol{01}]
\right)\\%[3em]
&+
\left(
%\hphantom{-}
-[0]\otimes [1|\ol{01}]-[01|0]\otimes [1]+[01]\otimes [1|0]+[1|0]\otimes [\ol{01}]
\right)\\
&+(-1)
\left(
%\hphantom{-}
 [\ol{01}|1]\otimes [0]+[1]\otimes [0|01]-[0|1]\otimes [01]- [\ol{01}]\otimes[0|1]
\right)\\%[3em]
&+(-1)
\left(
%\hphantom{-}
- [1|\ol{01}]\otimes [0]-[1]\otimes [01|0]+[1|0]\otimes [01]+[\ol{01}]\otimes [1|0]
\right)\\
&+
\left(
[0]\otimes \left( [01|1]+[1|01]\right)+\left( [01|1]+[1|01]\right) \otimes [0]
\right)\\
&+
\left(
[1]\otimes \left( [\ol{01}|0]+[0|\ol{01}]\right)
+\left( [\ol{01}|0]+[0|{01}]\right) \otimes [0]
\right).
\end{align*}

The four first lines of the above equality correspond to $4$ times the terms of Equation
\eqref{res-D-0011} in $\cNg{X}{\bullet}\otimes{\cNg{X}{\bullet}}^{\otimes
  2}\oplus {\cNg{X}{\bullet}}^{\otimes 2}\otimes \cNg{X}{\bullet}$. The two last
lines can be written as 
\[
[0]\otimes \left([01]\sha [1]\right)+\left([01]\sha [1]\right)\otimes [0] +
[0]\otimes \left([\ol{01}]\sha [0]\right)
+\left([\ol{01}]\sha [0]\right)\otimes [1] 
\]
which covers many terms of a shuffle between terms in the $\HH^0$ of the bar
construction. As an example,
\[
[0]\otimes \left([01]\sha [1]\right)
\]
covers many terms of
\[
[\Lcb_0]\otimes \left(\Lcb_{01}\sha \Lcb_1\right)=[0]\otimes \left(
[01]\sha[1]-\frac{1}{2}([0|1]-[1|0])\sha[1]
\right).
\]

Computing the reduced coproduct of $\pi_4(\Lcb_{0011})$ gives
\begin{multline*}
\Delta'(\pi_4(\Lcb_{0011}))=\left(-\frac{1}{2}\right)\left(\frac{1}{4}\right)
\left( 4[0]\otimes [0|1|1]+4[0|0]\otimes[1|1]+4[0|0|1]\otimes[1] \right.
\\ \left.
-4[1]\otimes [1|0|0]+4[1|1]\otimes[0|0]-4[1|1|0]\otimes[0]
\right).
\end{multline*}
where the factors $1/4$ and $4$ make it easier to relate
$\Delta'(\pi_4(\Lcb_{0011}))$ with shuffle products and the corresponding terms
in Equation \eqref{res-D-0011}.

We have already remarked that the terms $[0|0]\otimes [1|1]$ and $[1|1]\otimes
[0|0]$ in the equation above can be expressed as shuffles. The four other terms are 
similar. Hence we will only discuss the case of $[0]\otimes [0|1|1]$.
One can write 
\begin{align*}
4[0|1|1]&=2[0|1|1] +2[1|1|0]+2[0|1|1] -2[1|1|0] \\
&=[0|1|1] +[1|1|0]-[1|0|1]+[0|1|1]+[1|0|1]+[1|1|0]+2[0|1|1] -2[1|1|0].
\end{align*}
Now, one remarks that 
\[
[0|1|1]+[1|0|1]+[1|1|0]=\frac{1}{2}[0]\sha[1]\sha [1]
=\frac{1}{2}\Lcb_0\sha \Lcb_1 \sha \Lcb_1
\]
%%%%%%%%%%%%%%%% 
and that the tensor degree $3$ part of $-\frac{1}{2}\Lcb_{01}\sha\Lcb_1$ is equal to
\[
\frac{-1}{2}\,\frac{-1}{2}\left(
[0|1]\sha[1]-[1|0]\sha[1]
\right)=\frac{1}{4}\left( 2[0|1|1]-2[1|1|0] \right).
\]
Then one can conclude that 
\begin{align*}\label{Dred0011}
\Delta'(\Lcb_{0011})=&
-\frac{1}{2}\left(
\Lcb_{0}\otimes \Lcb_{011}-\Lcb_{011}\otimes \Lcb_0 
+\Lcub_{001}\otimes \Lcb_1 -\Lcb_1 \otimes \Lcub_{001}\right.\\
&\hskip 40ex \left.
-\Lcb_{01}\otimes \Lcub_{01} +\Lcub_{01}\otimes \Lcb_{01} 
\right)\\
&+\frac{1}{4}\left(
\Lcb_{01}\sha\Lcb_1\otimes \Lcb_0+\Lcub_{01}\sha\Lcb_0\otimes \Lcb_1
+\Lcb_0 \otimes \Lcb_{01}\sha\Lcb_1 \right. \\
& \hskip 40ex \left. +\Lcb_1 \otimes\Lcub_{01}\sha\Lcb_0
\right) \\
&\frac{-1}{16}\left(
-\Lcb_0 \sha \Lcb_{1}\sha\Lcb_1\otimes \Lcb_0-\Lcb_{0}\sha\Lcb_0\sha \Lcb_1\otimes
\Lcb_1 \right.\\
&\hskip 10ex \left.
+\Lcb_0 \otimes \Lcb_0 \sha \Lcb_{1}\sha\Lcb_1 +\Lcb_1
\otimes\Lcb_{0}\sha\Lcb_0\sha \Lcb_1
\right).
\end{align*}
In $\mc M_X \w \mc M_X$, one simply gets 
\begin{equation}\label{dbar0011modsha}
d_{\Delta}(\Lcb_{0011})=- \left(\Lcb_{0}\w \Lcb_{011}+\Lcub_{001}\w \Lcb_1 
-\Lcb_{01}\w \Lcub_{01} \right)
\end{equation}
and using the relations between $\Lcb_W$ and $\Lcub_W$, one recovers (up to a
global minus sign) the
differential equation associated to $\T{0011}$
\begin{equation}\label{dbar0011-tree}
d_{\Delta}(\Lcb_{0011})=-\left(\Lcb_{0}\w \Lcb_{011}+(\Lcb_{001}-\Lcb_{001}(1))\w \Lcb_1 
+\Lcb_{01}\w \Lcb_{01}(1)\right). 
\end{equation}

The associated motive is as above the sub-vector space of $\mc M_X$ generated by
\[
\Lcb_{0},\,\Lcub_{001},\,\Lcb_{0011},\,\Lcb_{01},\, \Lcb_{011},\, 
 \Lcub_{01} , \,\Lcb_1. 
\]
\end{exm}
\subsection{Goncharov's motivic coproduct}
In this subsection, we would like to illustrate how the differential equation
satisfied by the elements $\Lcb_W$, written using its ``tree differential form''
(that is using the elements $\Lcb_U(1)$ instead of the elements $\Lcub_U$),
gives another expression for Goncharov's motivic coproduct.

Work of Levine \cite{LEVTMFG} 
insures that the above 
differential coincides with Goncharov's motivic coproduct for motivic iterated
integrals (modulo products). We will not review this theory here but
only recall some of the needed properties satisfied by Goncharov's motivic iterated
integrals \cite{GSFGGon}. 
A short exposition of the combinatorics involved is also recalled in
\cite{GanGonLev05}[Section 8]. 

For our purpose, it is enough to consider motivic iterated integrals as
degree $n$ generating elements $I(a_0;a_1,\ldots, a_n; a_{n+1})$ of a Hopf
algebra with $a_i$ in $\A^1(\Q)$. They are subject to the following relations.
\begin{description}
\item[Path composition] for $x$ in $\A^1(\Q)$, one has 
\[
I(a_0;a_1,\ldots, a_n; a_{n+1})=\sum_{k=0}^nI(a_0;a_1,\ldots, a_k;x)
I(x;a_{k+1},\ldots, a_n; a_{n+1}).
\]
\item[Inversion] which relates $I(a_0;a_1,\ldots, a_n; a_{n+1})$ and 
$I(a_{n+1};a_n,\ldots, a_1; a_{0})$
\[
I(a_{n+1};a_n,\ldots, a_1; a_{0})=(-1)^nI(a_0;a_1,\ldots, a_n; a_{n+1}).
\] 
\item[Unit and neutral identities]
\[
\mbox{for }a\neq b \quad I(a;b)=1 \qquad \mx{and} \qquad
I(a_0;a_1,\ldots, a_n; a_0)=0.
\]
%\item[Shuffle]
\item[Rescaling] If $a_{n+1}$ and at least one of the $a_i$ is not zero then
\[
I(0;a_1,\ldots, a_n, a_{n+1})=I(0;a_1/a_{n+1},\ldots, a_n/a_{n+1}, 1).
\]
\item[Regularization]
\[
I(0;1;1)=I(0;0;1)=0.
\]
\end{description}
The product is given by the shuffle relations
\[
I(a;a_1,\ldots, a_n; b)I(a;a_{n+1},\ldots, a_{n+m}; b)=
\sum_{\sigma \in sh(n,m)}I(a;a_{\sigma(1)},\ldots, a_{\sigma(n+m)} ;b)
\]
where $sh(n,m)$ denotes the set of permutations preserving the order of the
ordered subset $\{1, \ldots, n\}$ and $\{n+1, \ldots, n+m\}$. Such a motivic
iterated integral corresponds formally to the iterated integral
\[
\int_{\Delta_{a_0,a_{n+1}}} \frac{dt}{t-a_1} \w \cdots \w \frac{dt}{t-a_n}
\]
with $\Delta_{a_0,a_{n+1}}$ the image of the standard simplex induced by a path
from $a_0$ to $a_{n+1}$. The above relations reflect the relations satisfied by
the integrals.

The coproduct is given by the formula
\begin{multline*}
\Delta^M(I(a_0;a_1\ldots,a_n;a_{n+1}))= \\
\sum_{
\substack{
(a_{k_1},\ldots,a_{k_r})\\
\{k_1,\ldots,k_r\}\subset\{1,\ldots ,n\}
}
}I(a_0;a_{k_1},\ldots,a_{k_r};a_{n+1}) \otimes 
\prod_{l=0}^r I(a_{k_l};a_{k_l+1},\ldots, a_{k_{l+1}-1};a_{k_{l+1}})
\end{multline*}
with the convention that $r$ runs from $0$ to $n$ and that $k_0=0$ and
$k_{r+1}=n+1$. 

Now, considering the reduced coproduct $\Delta^{'M}=\Delta^M-(1\otimes \id
+\id\otimes 1)$ on the space of indecomposable elements (that is modulo products), the
above formula reduces to
\begin{multline*}
\Delta^{'M}(I(a_0;a_1\ldots,a_n;a_{n+1}))= \\
\sum_{\substack{k<l \\ k\neq l-1}}
I(a_0;a_{1},\ldots,a_k,a_l,a_{l+1},\ldots a_n;a_{n+1}) \otimes 
I(a_{k};a_{k+1},\ldots, a_{l-1},a_{l}).
\end{multline*}

This formula can be pictured placing the $a_i$ on a semicircle in the order
dictated by their indices. Then a term in the above sums corresponds to a
non-trivial chord between to vertices:
\[
\begin{tikzpicture}
\node[mathsc](origine) at (0,0){};%{\bullet};
\foreach \i in {0,1,...,5}
\node[mathsc] (a\i) at (180/5*\i:3) {\bullet};
\draw (a0) arc (0:180:3);
\foreach \k/\a in {0/a_5,1/a_4,2/a_3,3/a_2,4/a_1,5/a_0}
\node[mathsc] (b\k) at (180/5*\k:3.4) {\a};
%\foreach \i/\k in {0/4,0/3,0/2,1/5,1/4,1/3,2/5,2/4,3/5}
%		\draw (a\i) -- (a\k);
\draw[dotted] (a0.center) -- (a5.center);
\draw (a1.center)--(a3.center);
\end{tikzpicture}
\]

Considering the relation between multiple polylogarithms and iterated integrals,
we want to related our expression of the differential of $\Lcb_{011}$ at $t$ to
the reduced coproduct for the 
motivic iterated integral $I(0;0,x,x;1)$ for $x=t^{-1}$.  
From the semicircle
representation, one sees that there are five terms to consider:
\[
\begin{tikzpicture}
\node[mathsc](origine) at (0,0){};%{\bullet};
\foreach \i in {0,1,...,4}
\node[mathsc] (a\i) at (180/4*\i:3) {\bullet};
\draw (a0) arc (0:180:3);
\foreach \k/\a in {0/1,1/x,2/x,3/0,4/0}
\node[mathsc] (b\k) at (180/4*\k:3.4) {\a};
%\foreach \i/\k in {0/4,0/3,0/2,1/5,1/4,1/3,2/5,2/4,3/5}
%		\draw (a\i) -- (a\k);
\draw[dotted] (a0.center) -- (a5.center);
\draw[dashed]  (a4.center)--(a2.center) 
node[midway,above,sloped,mathsc]{c_1};
\draw (a4.center)--(a1.center);
\draw[dashed] (a3.center)--(a1.center) 
node[midway,above,sloped,mathsc]{c_2};
\draw[dashed] (a3.center)--(a0.center) 
node[above,midway,sloped,mathsc]{c_3};
\draw (a2.center)--(a0.center);
\end{tikzpicture}
\]

However, the chord $c_1$ gives a zero term modulo products as
$I(0;x,x;1)=\frac{1}{2}I(0;x;1)I(0;x;1)$ and chord $c_2$ and $c_3$
 give terms equal to $0$
using the regularization relations. There are thus only two terms to consider 
\[
I(0;0,x;1)\otimes I(x;x;1)
\qquad \mx{and} \qquad 
I(0;x;1)\otimes I(0;0,x;x).
\]

Using path composition, inversion and  regularization relations, in the set
of indecomposable elements, one has
\[
I(x;x;1)=I(0;x;1)+I(x;x;0)=I(0;x;1)-I(0;x;x)=I(0;x;1).
\]
Thus the first term equals
\[
I(0;0,x;1)\otimes I(x;x;1)=I(0;0,x;1)\otimes I(0;x;1).
\]
From the rescaling relation, the second term equals
\[
I(0;x;1)\otimes I(0;0,1;1)
\]
and one can write modulo products
\begin{equation}\label{DI3}
\Delta^{'M}(I(0;0,x,x;1))=I(0;0,x;1)\otimes I(0;x;1) + I(0;x;1)\otimes I(0;0,1;1).
\end{equation}
Keeping in mind that, for $x=t^{-1}$, $I(0;x;1)$ corresponds to the fiber at $t$
of $\Lcb_1$ ($t\neq 1$) and that $ I(0;0,x;1)$ corresponds to the fiber at $t$
of $\Lcb_{01}$ (any $t$), the above formula \eqref{DI3} corresponds exactly to Equation
\eqref{011BeqT}:
\[
d_{\Delta}(\Lcb_{011})=-\left(\Lcb_{01}-\Lcb_{01}(1)\right) \w \Lcb_1=-\left(
\Lcb_{01}\w \Lcb_1 + \Lcb_1 \w \Lcb_{01}(1)\right).
\]

The fact that, as in the above example, Goncharov's motivic iterated integrals
corresponding to multiple polylogarithms in one variable, satisfy the tree
differential equations \eqref{ED-T} is easily checked for Lyndon words with one
$1$ (that is for the classical polylogarithms) but can also be checked for some
Lyndon words with two $1$'s or more. It seems to be a general behavior. However,
the above example and the 
example of $I(0;0,0,x,x;1)$ corresponding to $\Lcb_{0011}$ below show that it
involves using all the relations in order to pass from Goncharov's formula to the
shape of Equation \eqref{ED-T}.

The case of $\Lcb_{0011}$ involves more computations but works essentially as
the case of $\Lcb_{011}$. The reduced coproduct for $I(0;0,0,x,x;1)$ modulo
products gives nine terms corresponding to the nine chords below:
\[
\begin{tikzpicture}
\node[mathsc](origine) at (0,0){};%{\bullet};
\foreach \i in {0,1,...,5}
\node[mathsc] (a\i) at (180/5*\i:3) {\bullet};
\draw (a0) arc (0:180:3);
\foreach \k/\a in {0/1,1/x,2/x,3/0,4/0,5/0}
\node[mathsc] (b\k) at (180/5*\k:3.4) {\a};
%\foreach \i/\k in {0/4,0/3,0/2,1/5,1/4,1/3,2/5,2/4,3/5}
%		\draw (a\i) -- (a\k);
\draw[dashed] (a5.center)--(a3.center);% node[midway,sloped,mathsc,above]{I(0:--:0)};
\draw[dashed] (a5.center)--(a2.center);% node[near start,
%sloped,mathsc,below]{shu-I(0:x,x:1)};
\draw[dashed] (a3.center)--(a1.center);% node[near start,sloped,mathsc,below]{reg-I(0:x:x)};
\draw[dashed] (a3.center)--(a0.center);% node[midway,sloped,mathsc,below]{shu-I(0:x,x:1)};
\draw[black] (a5.center)--(a1.center) node [near start,below,mathsc,sloped]  {c_1};
\draw[dashed] (a4.center)--(a0.center);% node [near end,below,mathsc,sloped]  {reg-I(0:0:1)};
\draw[black] (a4.center)--(a1.center) node [midway,below,mathsc,sloped]  {c_3};
\draw[black] (a4.center)--(a2.center) node [midway,below,mathsc,sloped]  {c_2};
\draw[black] (a2.center)--(a0.center) node [near end,above,mathsc,sloped]  {c_4};
\end{tikzpicture}
\]

The five dashed chords give terms equal to $0$ for one of the following reasons:
$I(a;\ldots;a)=0$, regularization relations or shuffle relations. Hence we are
left with four terms.
The chord $c_1$ gives, using the rescaling relation,
\[
I(0;x;1)\otimes I(0;0,0,1;1)
\]
corresponding to $\Lcb_1 \w \Lcb_{001}(1)$. The chord $c_2$ gives a term in 
\[
I(0;0,x,x;1)\otimes I(0;0;x)
\]
corresponding to $\Lcb_0\w \Lcb_{011}$. 
%; note that the sign is correct as
%iterated integral also reflect the relation
%$\log(t)=-\log(t^{-1})=\int_1^{t^{-1}}du/u$. 
The chord $c_3$ gives, using the
rescaling relation, a term in
\[
I(0;0,x;1)\otimes I(0;0,x;x)=I(0;0,x;1)\otimes I(0;0,1;1)
\]
corresponding to $\Lcb_{01}\w \Lcb_{01}(1)$. Finally the chord $c_4$ gives,
using the path composition and regularization relations 
\begin{align*}
I(0;0,0,x;1)\otimes I(x;x;1)&=I(0;0,0,x;1)\otimes I(0;x;1)+I(0;0,0,x;1)\otimes
I(x;x;0) \\
&=I(0;0,0,x;1)\otimes I(0;x;1).
\end{align*}

Finally, $ \Delta^{'M}(I(0;0,0,x,x;1))$ can be written as 
\begin{multline}\label{DI4}
\Delta^{'M}(I(0;0,0,x,x;1))=
I(0;0,x,x;1)\otimes I(0;0;x)+
I(0;0,0,x;1)\otimes I(0;x;1)+ \\
I(0;x;1)\otimes I(0;0,0,1;1) %\\
+I(0;0,x;1)\otimes I(0;0,1;1).
\end{multline}
This expression corresponds to Equation \eqref{dbar0011-tree}:
\[
d_{\Delta}(\Lcb_{0011})=-\left(\Lcb_{0}\w \Lcb_{011}+(\Lcb_{001}-\Lcb_{001}(1))\w \Lcb_1 
+\Lcb_{01}\w \Lcb_{01}(1)\right). 
\]

\section{Integrals and multiple zeta values}\label{sec:int}
We present here a sketch of how to associate an integral to cycles
$\Lc_{01}$, $\Lcu_{01}$ and $\Lc_{011}$. The author will directly follow the
algorithm described in \cite{GanGonLev05}[Section 9] and put in detailed practice in
\cite{GanGonLev06}.  There will be no general review of the direct Hodge realization
from Bloch-Kriz  motives \cite{BKMTM}[Section 8 and 9]. Gangl, Goncharov and
Levin's construction seems to consist in setting particular choices of
representatives in the intermediate Jacobians for their
algebraic cycles. 

However, the goal of this paper is not to formalize such an idea. That is why
computations below are only outlined. In particular, the 
lack of precise knowledge of the ``algebraico-topological cycle algebra''
described in \cite{GanGonLev05} makes it difficult to control how
``negligible'' cycles are killed looking at the $\HH^0$ of its bar
construction.

\subsection{An integral associated to $\Lc_{01}$ and $\Lcu_{01}$}
We recall the parametrized cycle expression for $\Lc_{01}$:
\[
\Lc_{01}=[t;1-\frac{t}{x_1}, x_1,1-x_1]\qquad
\subset X\times \square^3.
\]

One wants to bound $\Lc_{01}$ by an algebraic-topological cycle
in a larger bar construction (not described here) introducing topological
variables $s_i$ in real 
simplices  
\[
\Delta_s^n=\{0\leqs s_1 \leqs \cdots \leqs s_n \leqs 1\}.
\]
 Let $d^s : \Delta^n_s \ra \Delta_s^{n-1}$ denotes the simplicial differential
\[
d^s=\sum_{k=0}^{n}(-1)^k i_k^*
\] 
where $i_k : \Delta_s^{n-1} \ra \Delta_s^{n}$ is given by the face
$s_k=s_{k+1}$ in $\Delta_s^n$ with the usual conventions for $k=0,n$.

Defining
\[
C_{01}^{s,1}=-[t; 1-\frac{s_2t}{x_1},x_1,1-x_1]
\] 
for $s_2$ going from $0$ to $1$, one sees that $d^s(C_{01}^{s,1})=\Lc_{01}$ as $s_2=0$
implies that the first cubical coordinate is $1$. The algebraic boundary $\dN$
of $C_{01}^{s,1}$ is given by the intersection with the faces of $\square^3$:
\[
\dN(C_{01}^{s,1})=-[t;s_2t, 1-s_2t] 
\qquad \subset X \times \square^2.
\]

This cycle is part of the boundary of a larger ``simplicial'' algebraic cycle
\[
C_{01}^{s,2}=-[t;s_2t,1-s_1t].
\]
Computing the simplicial differential of $C_{01}^{s,2}$ gives
\[
d^s(C_{01}^{s,2})=[t;s_2 t,1-s_2t]-[t;t,1-s_1t] 
\qquad \subset X \times \square^2
\]
with $0\leqs s_2\leqs 1$ in the first term and $0\leqs s_1 \leqs 1$ in the
second term.

Note that the cycle $[t; t,1-s_1t]$ is negligible as it is a product
\[
[t; t,1-s_1t]=\Lc_0 \, [t;1-s_1t]
\]
and thus can be canceled in the bar construction setting as the multiplicative
boundary of 
\[
-[\Lc_0 | [t, 1-s_1 t] ].
\]

Thus, up to negligible terms,
\[
(d^s+\dN)(C_{01}^{s,1}+C_{01}^{s,2})=\Lc_{01}.
\]

Now, we fix the situation at the fiber $t_0$ and following Gangl, Goncharov and
Levin, we associate to the algebraic cycle 
$\Lc_{01}|_{t=t_0}$ the integral $I_{01}(t_0)$ of the standard volume form
\[
\frac{1}{(2i\pi)^2}\frac{dz_1}{z_1}\w \frac{dz_2}{z_2}
\]
%%%%%%%%%%%
over the simplex given by $C_{01}^{s,2}$. That is :
\begin{align*}
I_{01}(t_0)&=-\frac{1}{(2i\pi)^2}\int_{0\leqs s_1 \leqs s_2  \leqs 1}
 \frac{ds_2}{s_2} \w
\frac{-t_0\, ds_1}{1-t_0s_1} \\
& =\frac{-1}{(2i\pi)^2}\int_{0\leqs s_1 \leqs s_2  \leqs 1}
\frac{ ds_1}{t^{-1}_0-s_1} \w \frac{ds_2}{s_2} =\frac{-1}{(2i\pi)^2}Li^{\C}_{2}(t_0).
\end{align*}
In particular, this expression is valid for $t_0=1$, as is the cycle $\Lc_{01}|_{t=1}$,
and gives $-1/(2\pi^2) \zeta(2)$.

Before presenting the weight $3$ example of $\Lc_{011}$, we describe shortly below the
situation for $\Lcu_{01}$. In the bar construction the element $\Lcub_{01}$
is equal  to the difference $\Lcb_{01}-\Lcb_{01}(1)$. Associating an integral
to $\Lcu_{01}$ works in the same way as the cycle $\Lc_{01}$
but it also reflects the correspondence with $\Lcb_{01}-\Lcb_{01}(1)$. 

The expression of $\Lcu_{01}$ in terms of parametrized cycle is given by 
\[
\Lcu_{01}=[t; \frac{x_1-t}{x_1-1},x_1,1-x_1]
\]
and can be bounded using the ``simplicial'' algebraic cycle
\[
C_{\ol{01}}^{s,1}=-[t;\frac{x_1-s_2t}{x_1-s_2},x_1,1-x_1].
\]
Now, the algebraic boundary of $C_{\ol{01}}^{s,1}$ gives two terms
\[
\dN(C_{\ol{01}}^{s,1})=-[t;s_2t, 1-s_2t]+[t;s_2,1-s_2].
\]
Then one defines $C_{\ol{01}}^{s,2}$ for simplicial variables $0\leqs s_1 \leqs
s_2 \leqs 1$ as 
\[
C_{\ol{01}}^{s,2}=-[t;s_2t,1-s_1t]+[t;s_2,1-s_1]
\]
whose simplicial boundary cancels $\dN(C_{\ol{01}}^{s,1})$ up to negligible
cycles. Again, fixing a fiber $t_0$ the integral associated to
$\Lcu_{01}|_{t=t_0}$ is the integral of the 
standard volume form over the $C_{\ol{01}}^{s,2}$:
\begin{equation*}
I_{\ol{01}}(t_0)=-\frac{1}{(2i\pi)^2}
\left(\int_{0\leqs s_1 \leqs s_2  \leqs 1}
 \frac{ds_2}{s_2} \w
\frac{-t_0\, ds_1}{1-t_0s_1} %\\
+ 
%\frac{1}{(2i\pi)^2}
\int_{0 \leqs s_1 \leqs s_2 \leqs 1}\frac{ ds_2}{s_2} \w \frac{ -ds_1}{1-s_1}
\right).
\end{equation*}

This expression is exactly the difference
\[
I_{\ol{01}}(t_0)=\frac{-1}{(2i \pi)^2}\left(Li^{\C}_2(t_0)-Li^{\C}_2(1)\right). 
\]

\subsection{An integral associated to $\Lc_{011}$}

% This should comes from the general fact
% \begin{claim} Let $(A,d)$ be an Adams graded augmented c.d.g.a, $A^+$ its
% augmentation ideal 
%   and $V=\oplus_{n\geqs 1} V(n)$ the Adams vector space generated by
%   (cohomological) degree $1$ elements in $A^+$ such that  
% \begin{itemize}
% \item Any $v\in V$ has  decomposable differential, that is $ d(v)\in (A^+\cap
%   V)\times (A^+\cap V)$. 
% \item $d(V(1))=\{0\}$.
% \end{itemize}
% Then there exist a linear map
% \[
% \vp : V \lra Z^0(B(A))
% \]
% such that
% \end{claim}

Let's recall the expression of $\Lc_{011}$ as parametrized cycle:
\[
\Lc_{011}=-[t; 1-\frac{t}{x_2},1-x_2,\frac{x_1-x_2}{x_1-1},x_1,1-x_1].
\]

As previously, one wants to bound $\Lc_{011}$ by an algebraic-topological
cycle.
Hence we define
\[
C_{011}^{s,1}=[t; 1-\frac{s_3t}{x_2},1-x_2,\frac{x_1-x_2}{x_1-1},x_1,1-x_1]
\]    
for $s_3$ going from $0$ to $1$. Then $d^s(C_{011}^{s,1})=\Lc_{011}$ as $s_3=0$
implies that the first cubical coordinate is $1$.

Now the algebraic boundary $\dN$ of $C_{011}^{s,1}$ is given by the intersection with the
codimension $1$ faces of $\square^5$: 
\[
\dN(C_{011}^{s,1})=[t;1-s_3t, \frac{x_1-s_3t}{x_1-1},x_1,1-x_1].
\]
We can again bound this cycle by introducing a new simplicial variable $0\leqs
s_2\leqs s_3$ and the cycle 
\[
C_{011}^{s,2}=[t;1-s_3t,\frac{x_1-s_2t}{x_1-s_2/s_3},x_1,1-x_1].
\]
The intersection with the face of the simplex $\{0\leqs s_2 \leqs s_3 \leqs
1\}$ given by 
$s_2=0$  leads to an empty cycle (as one cubical coordinate
equals $1$) while the intersection with face $s_3=1$ leads to a ``negligible''
cycle. 
Thus, the simplicial boundary of $C_{011}^{s,2}$ satisfies (up to a negligible term)
\[
d^s(C_{011}^{s,2})=-\dN(C_{011}^{s,1})=-[t;1-s_3t,
\frac{x_1-s_3t}{x_1-1},x_1,1-x_1]. 
\]
Its algebraic boundary is given  by 
\[
\dN(C_{011}^{s,2})=-[t;1-s_3t,s_2t,1-s_2t]
+[t;1-s_3t,\frac{s_2}{s_3},1-\frac{s_2}{s_3}].  
\]
Finally, we introduce a last simplicial variable $0\leqs s_1 \leqs s_2$ and a
purely topological cycle
\[
\wt{C_{011}}^{s,3}=-[t; 1-s_3t,
s_2t,1-s_1t] + [t;1-s_3t,\frac{s_2}{s_3},1-\frac{s_1}{s_3}] 
\]
whose simplicial differential is (up to negligible terms) given in one hand by the face
$s_1=s_2$:
\[
%d^s(C_{011}^{s,3})=-\dN(C_{011}^{s,2})=
[t;1-s_3t,s_2t,1-s_2t]
-[t;1-s_3t,\frac{s_2}{s_3},1-\frac{s_2}{s_3}]
\]
which is equal to $-\dN(C_{011}^{s,2})$; and in the other hand by the face $s_2=s_3$:
\[
-[t;1-s_3t,s_3t,1-s_1t].
\]
In order to cancel this extra term, we defined $C_{011}^{s,3}$ by
\[
C_{011}^{s,3}=\wt{C_{011}}^{s,3}+[t;1-s_2t,s_3t,1-s_1t]
\]
whose algebraic boundary is $0$ (up to negligible terms). 

Finally one has
\[
(d^s+\dN)(C_{011}^{s,1}+C_{011}^{s,2}+C_{011}^{s,3})=\Lc_{011}
\] 
up to negligible terms.

Now, we fix the situation at the fiber $t_0$ and following Gangl, Goncharov and
Levin, we associate to the algebraic cycle 
$\Lc_{011}|_{t=t_0}$ the integral $I_{011}(t_0)$ of the standard volume form
\[
\frac{1}{(2i\pi)^3}\frac{dz_1}{z_1}\wedge \frac{dz_2}{z_2}\w \frac{dz_3}{z_3}
\]
over the simplex given by $C_{011}^{s,3}$. That is :
\begin{multline*}
I_{011}(t_0)=-\frac{1}{(2i\pi)^3}\int_{0\leqs s_1 \leqs s_2 \leqs s_3 \leqs 1}
\frac{t_0\, ds_3}{1-t_0s_3} \w \frac{ds_2}{s_2} \w
\frac{t_0\, ds_1}{1-t_0s_1} \\
+ 
\frac{1}{(2i\pi)^3}\int_{0\leqs s_3 \leqs 1}\frac{t_0\, ds_3}{1-t_0s_3}
\int_{0 \leqs s_1 \leqs s_2 \leqs 1}\frac{ ds_2}{s_2} \w \frac{ ds_1}{1-s_1}\\
+ 
\frac{1}{(2i\pi)^3}\int_{0\leqs s_1 \leqs s_2 \leqs s_3 \leqs 1}
\frac{t_0ds_2}{1-t_0s_2}\w \frac{ds_3}{s_3}\w \frac{t_0ds_1}{1-t_0s_1}
.
\end{multline*}
 
Taking care of the change of sign due to the numbering, the first term  in the
above sum is (for $t_0 \neq 0$ and up to the factor $(2i\pi)^{-3}$) equal to  
\[
Li_{1,2}^{\C}(t_0)=\int_{0\leqs s_1 \leqs s_2 \leqs s_3 \leqs 1}
\frac{ ds_1}{t_0^{-1}-s_1} \w \frac{ds_2}{s_2} \w
\frac{ds_3}{t_0^{-1}-s_3}
\] 
while (up to the same multiplicative factor) the second term is equal to 
\[
-Li_1^{\C}(t_0)Li_2^{\C}(1)
\]
and the third term is equal to 
\[
Li_{2,1}^{\C}(t_0).
\]

Globally the integral is well defined for $t_0=0$ and, which is the
interesting part, also for $t_0=1$ as the divergencies as $t_0$ goes to $1$ cancel
each other in the above sum. A simple computation and the shuffle relation for
$Li_1^{\C}(t_0)Li_2^{\C}(t_0)$ show that  the integral associated to the fiber of
$\Lc_{011}$ at $t_0=1$ is given by
\[
(2i\pi)^{3}I_{011}(1)=-Li_{2,1}^{\C}(1)=-\zeta(2,1).
\]

\providecommand{\bysame}{\leavevmode\hbox to3em{\hrulefill}\thinspace}
\providecommand{\MR}{\relax\ifhmode\unskip\space\fi MR }
% \MRhref is called by the amsart/book/proc definition of \MR.
\providecommand{\MRhref}[2]{%
  \href{http://www.ams.org/mathscinet-getitem?mr=#1}{#2}
}
\providecommand{\href}[2]{#2}

\end{document}